\makeatletter \@addtoreset{equation}{section} \makeatother
\renewcommand\thefigure{\thesection.\@arabic\c@figure}
\renewcommand\thetable{\thesection.\@arabic\c@table}
\newtheorem{theorem}{Theorem}[section]
\newtheorem{lemma}[theorem]{Lemma}
\newtheorem{proposition}[theorem]{Proposition}
\newtheorem{corollary}[theorem]{Corollary}
\newtheorem{remark}[theorem]{Remark}
\newcommand{\bb}[1]{{\mathbb #1}}
\newcommand{\mc}[1]{{\mathcal #1}}
\newcommand{\mf}[1]{{\mathfrak #1}}
\newcommand{\mb}[1]{{\mathbf #1}}
\newcommand{\bs}[1]{{\boldsymbol #1}}
\newcommand{\<}{\langle}
\renewcommand{\>}{\rangle}
\newcommand{\cc}{\mathcal{C}}
\newcommand{\cm}{\mathcal{M}}
\newcommand{\R}{\mathbb{R}}
\let\G=\Gamma
\newcommand{\1}{\,\rlap{\small 1}\kern.13em 1}
\newcommand{\sqr}[2]{{\vcenter{\hrule height.#2pt%
                      \hbox{\vrule width.#2pt height#1pt\kern#1pt%
                            \vrule width.#2pt}%
                      \hrule height.#2pt}}}
\renewcommand{\limsup}{\mathop{\overline{\hbox{\rm lim}}}}
\renewcommand{\liminf}{\mathop{\underline{\hbox{\rm lim}}}}
\newcommand{\Tr}{\text{Tr}}
\title[Dynamical Large Deviations]{Dynamical
  large deviations for a boundary driven stochastic lattice gas model with many conserved quantities}
\author{Jonathan Farfan, Alexandre B. Simas \and Fábio J. Valentim}
\address{{\rm J. Farfan, A. B. Simas\and F. J. Valentim} \newline
IMPA, Estrada Dona Castorina 110,
CEP 22460 Rio de Janeiro, Brasil
\newline e-mail: \rm \texttt{jonathan@impa.br}, \texttt{alesimas@impa.br}, \texttt{valentim@impa.br}}
\begin{document}

\noindent \keywords{Boundary driven exclusion processes, large deviations} 
\thanks{Research supported by CNPq.}
\subjclass[2000]{Primary 82C22; Secondary 60F10, 82C35}

\begin{abstract}
We prove the dynamical large deviations for a particle system in which particles may have different velocities.  We assume that we have two infinite reservoirs of particles at the boundary: this is the so-called boundary driven process. The dynamics we considered consists of a weakly asymmetric simple exclusion process with collision among particles having different velocities. 
\end{abstract}

\maketitle


\section{introduction}
In the last years there has been considerable progress in understanding stationary non equilibrium states: reversible systems in contact
with different reservoirs at the boundary imposing a gradient on the conserved quantities of the system. In these systems there is a flow of matter
through the system and the dynamics is not reversible. The main difference with respect to equilibrium (reversible)
states is the following. In equilibrium, the invariant measure, which determines the
thermodynamic properties, is given for free by the Gibbs distribution specified by
the Hamiltonian. On the contrary, in non equilibrium states the construction of
the stationary state requires the solution of a dynamical problem. One of the most striking typical property of these systems is the presence of long-range correlations. For the symmetric simple exclusion this was already shown in a pioneering paper by Spohn \cite{S2}. 
We refer to \cite{BDGJL1,D} for two recent reviews on this topic. 

We discuss this issue in the context of stochastic lattice gases in a box of linear
size $N$ with birth and death process at the boundary modeling the reservoirs. We
consider the case when there are many thermodynamic variables: the local density
denoted by $\rho$, and the local momentum denoted by $p_k$, $k=1,\ldots,d$, $d$ being the dimension of the box.

The model which we will study can be informally described as follows: fix a velocity $v$, an integer $N\geq 1$, and boundary densities $0 < \alpha_v(\cdot) < 1$ and $0 < \beta_v(\cdot) < 1$; at any given time, each site of the set $\{1,\ldots,N-1\}\times\{0,\ldots,N-1\}^{d-1}$ is either empty or occupied by one particle at velocity $v$. In the bulk, each particle attempts to jump at any of its neighbors at the same velocity, with a weakly asymmetric rate. To respect the exclusion rule, the particle jumps only if the target
site at the same velocity $v$ is empty; otherwise nothing happens. At the boundary, sites with first coordinates given by $1$ or $N -1$ have particles
being created or removed in such a way that the local densities are $\alpha_v(\tilde{x})$ and $\beta_v(\tilde{x})$: at rate $\alpha_v(\tilde{x}/N)$ a particle is
created at $\{1\}\times\{\tilde{x}\}$ if the site is empty, and at rate $1-\alpha_v(\tilde{x})$ the particle at $\{1\}\times\{\tilde{x}\}$
is removed if the site is occupied, and at rate $\beta_v(\tilde{x})$ a particle is
created at $\{N-1\}\times\{\tilde{x}\}$ if the site is empty, and at rate $1-\beta_v(\tilde{x})$ the particle at $\{N-1\}\times\{\tilde{x}\}$
is removed if the site is occupied. Superposed to this dynamics, there is a collision process which exchange
velocities of particles in the same site in a way that momentum is conserved.

Similar models have been studied by \cite{BL,emy,qy}. In fact, the model we consider here is based on the model of Esposito et al. \cite{emy} which was used to derive the Navier-Stokes equation. It is also noteworthy that the derivation of hydrodynamic limits and macroscopic fluctuation theory for a system with two conserved quantities have been studied in \cite{cb}.

The hydrodynamic limit for the above model has been proved in \cite{s}. The hydrodynamic equation derives from the underlying stochastic dynamics through an appropriate scaling limit in which the microscopic time and space coordinates are rescaled diffusively. The hydrodynamic equation thus represents the law of
large numbers for the empirical density of the stochastic lattice gas. The convergence
has to be understood in probability with respect to the law of the stochastic
lattice gas.  Once it is established a natural question
is to consider large deviations.

This article thus provides a derivation of the dynamical large deviations for this model, and the proof follows the method introduced in \cite{flm}.
The main difference is that their proof of $I_T(\cdot|\gamma)$-density relied on some energy estimates that we were not able to achieve due to the presence of velocities. Therefore, we had to overcome problem by taking a different approach at that part.

The article is organized as follows: in Section \ref{sec2} we establish the notation and
state the main results of the article; in Section \ref{sec3}, we review the hydrodynamics for this model, that was obtained in \cite{s}; in Section \ref{sec4}, several properties of the rate function are derived; Section \ref{sec5} proves the $I_T(\cdot|\gamma)$-density, which is a key result for proving the lower bound; finally, in Section \ref{sec6} the proofs of the upper and lower bounds of the dynamical large deviations are given.

\section{Notation and Results}

\label{sec2}

Fix a positive integer $d\ge 1$.  Denote by ${D^d}$ the open set $(0,1)
\times \bb T^{d-1}$, where $\bb T^{k}$ is the $k$-dimensional torus
$[0,1)^k$, and by $\Gamma$ the boundary of ${D^d}$: $\Gamma = \{(u_1,
\dots , u_d)\in [0,1] \times \bb T^{d-1} : u_1 = \pm 1\}$.

For an open subset $\Lambda$ of $\bb R\times\bb T^{d-1}$, $\mc C^m
(\Lambda)$, $1\leq m\leq +\infty$, stands for the space of
$m$-continuously differentiable real functions defined on $\Lambda$.
Let $\mc C^m_0 (\Lambda)$ (resp. $\mc C^m_c (\Lambda)$), $1\leq m\leq
+\infty$, be the subset of functions in $\mc C^m (\Lambda)$ which
vanish at the boundary of $\Lambda$ (resp. with compact support in
$\Lambda$).

For an integer $N\ge 1$, denote by $\bb T_N^{d-1}=\{0,\dots,
N-1\}^{d-1}$, the discrete $(d-1)$-dimensional torus of length $N$.
Let ${D_N^d}=\{1,\ldots,N-1\} \times \bb T_N^{d-1}$ be the
cylinder in $\bb Z^d$ of length $N-1$ and basis $\bb T_N^{d-1}$ and let
$\G_N=\{(x_1, \dots, x_{d}) \in \bb Z\times \bb T_N^{d-1}\,|\, x_1 = 1 \hbox{~or~} x_1=(N-1)\}$ be the boundary of ${D_N^d}$. 

Let $\mathcal{V}\subset \mathbb{R}^d$ be a finite set of velocities $v = (v_1,\ldots,v_d)$. Assume that $\mathcal{V}$ is invariant under reflexions and permutations of the coordinates:
$$(v_1,\ldots,v_{i-1},-v_i,v_{i+1},\ldots,v_d)\hbox{~and~}(v_{\sigma(1)},\ldots,v_{\sigma(d)})$$
belong to $\mathcal{V}$ for all $1\leq i\leq d$, and all permutations $\sigma$ of $\{1,\ldots,d\}$, provided $(v_1,\ldots,v_d)$ belongs to $\mathcal{ V}$.

On each site of $D_N^d$, at most one particle for each velocity is allowed. We denote: the number of particles with velocity $v$ at $x$, $v\in \mathcal{ V}$, $x\in D_N^d$, by $\eta(x,v)\in \{0,1\}$;
the number of particles in each velocity $v$ at a site $x$ by $\eta_x = \{\eta(x,v); v\in\mathcal{ V}\}$; and a configuration by $\eta = \{\eta_x; x\in D_N^d\}$. The set of particle configurations is $X_N = \left( \{0,1\}^\mathcal{V}\right)^{D_N^d}$.

On the interior of the domain, the dynamics consists of two parts: (i) each particle of the system evolves according to a nearest neighbor weakly asymmetric random walk with exclusion among particles of the same velocity, and (ii) binary collision between particles of different velocities. Let $p(x,v)$ be an irreducible probability transition function of finite range, and mean velocity $v$:
$$\sum_x x p(x,v) = v.$$
The jump law and the waiting times are chosen so that the jump rate from site $x$ to site $x+y$ for a particle with velocity $v$ is
$$P_N(y,v) = \frac{1}{2}\sum_{j=1}^d (\delta_{y,e_j} + \delta_{y,-e_j}) + \frac{1}{N}p(y,v),$$
where $\delta_{x,y}$ stands for the Kronecker delta, which equals one if $x=y$ and 0 otherwise, and $\{e_1,\ldots,e_d\}$ is the canonical basis in $\mathbb{R}^d$.

\subsection{The boundary driven exclusion process}
Our main interest is to examine the stochastic lattice gas model given by the
generator ${\mathcal{L}}_N$ which is
the superposition of the boundary dynamics with the collision and exclusion:
\begin{equation}\label{geradorbd}
{\mathcal{ L}}_N = N^2\{\mathcal{L}_N^b + \mathcal{ L}_N^c +\mathcal{ L}_N^{ex}\},
\end{equation}
where $\mathcal{L}_N^b$ stands for the generator which models the part of the dynamics at which a particle at the boundary can enter or leave the system, $\mathcal{ L}_N^c$ stands for the generator which models the collision part of the dynamics and lastly, $\mathcal{ L}_N^{ex}$ models the exclusion part of the dynamics. Let $f$ be a local function on $X_N$.
The generator of the exclusion part of the dynamics, $\mathcal{L}_N^{ex}$, is given by
$$(\mathcal{ L}_N^{ex} f)(\eta) = \sum_{v\in\mathcal{ V}} \sum_{x,x+z\in D_N^d} \eta(x,v)[1-\eta(z,v)]P_N(z-x,v)\left[f(\eta^{x,z,v})-f(\eta) \right],$$
where
$$\eta^{x,y,v}(z,w) = \left\{
\begin{array}{cc}
\eta(y,v)&\text{if $w=v$ and $z=x$},\\
\eta(x,v)&\text{if $w=v$ and $z=y$},\\
\eta(z,w)&\text{otherwise}.
\end{array}
\right.$$

The generator of the collision part of the dynamics, $\mathcal{ L}_N^c$, is given by
$$(\mathcal{ L}_N^c f)(\eta) = \sum_{y\in D_N^d}\sum_{q\in\mathcal{ Q}} p(y,q,\eta)\left[ f(\eta^{y,q}) - f(\eta)\right],$$
where $\mathcal{Q}$ is the set of all collisions which preserve momentum:
$$\mathcal{Q} = \{q=(v,w,v',w') \in \mathcal{ V}^4 : v+w=v'+w'\},$$
the rate $p(y,q,\eta)$ is given by
$$p(y,q,\eta) = \eta(y,v)\eta(y,w)[1-\eta(y,v')][1-\eta(y,w')],$$
and for $q = (v_0,v_1,v_2,v_3)$, the configuration $\eta^{y,q}$ after the collision is defined as
$$\eta^{y,q}(z,u) = \left\{
\begin{array}{cc}
\eta(y,v_{j+2})&\text{if $z=y$ and $u=v_j$ for some $0\leq j\leq 3$},\\
\eta(z,u)&\text{otherwise,}
\end{array}
\right.$$
where the index of $v_{j+2}$ should be taken modulo 4. 

Particles of velocities $v$ and $w$ at the same site collide at rate one and produce two particles of velocities $v'$ and $w'$ at that site.

Finally, the generator of the boundary part of the dynamics is given by
\begin{eqnarray*}
(\mathcal{ L}_N^b f)(\eta) &=&\!\!\! \sum_{\substack{x\in D_N^d\\x_1 = 1}} \sum_{v\in\mathcal{ V}} [ \alpha_v(\tilde{x}/N)[1-\eta(x,v)] + (1-\alpha_v(\tilde{x}/N))\eta(x,v)][f(\sigma^{x,v}\eta)-f(\eta)]\\
&+&\!\!\! \sum_{\substack{x\in D_N^d\\x_1 = N-1}} \sum_{v\in\mathcal{ V}} [ \beta_v(\tilde{x}/N)[1-\eta(x,v)] + (1-\beta_v(\tilde{x}/N))\eta(x,v)][f(\sigma^{x,v}\eta)-f(\eta)],
\end{eqnarray*} 
where $\tilde{x} = (x_2,\ldots,x_d)$, 
$$\sigma^{x,v}\eta(y,w) = \left\{\begin{array}{cc}
1-\eta(x,w),& \hbox{if ~} w = v\hbox{~and~} y=x,\\
\eta(y,w),& \hbox{otherwise.}
\end{array}
\right.,$$
and for every $v\in\mathcal{V}$, $\alpha_v,\beta_v\in C^2(\mathbb{T}^{d-1})$. 
Note that time has been speeded up diffusively in \eqref{geradorbd}. We also assume that, for every $v\in\mathcal{V}$, $\alpha_v$ and $\beta_v$ have images belonging to some compact subset of $(0,1)$. The functions $\alpha_v$ and $\beta_v$, which
affect the birth and death rates at the two boundaries, represent the densities of the reservoirs.

Let $D(\mathbb{R}_+,X_N)$ be the set of right continuous functions with left limits taking values on $X_N$. For a probability measure $\mu$ on $X_N$, denote by $\mathbb{ P}_\mu$ the measure on the path space $D(\mathbb{ R}_{+}, X_N)$ induced by $\{\eta(t): t\geq 0\}$ and the initial measure $\mu$. Expectation with respect to $\mathbb{ P}_\mu$ is denoted by $\mathbb{ E}_\mu$. 
\subsection{Mass and momentum}\label{massmomentum}
For each configuration $\xi \in\{0,1\}^\mathcal{ V}$, denote by $I_0(\xi)$ the mass of $\xi$ and by $I_k(\xi)$, $k=1,\ldots,d,$ the momentum of $\xi$:
$$I_0(\xi) = \sum_{v\in\mathcal{ V}} \xi(v),\quad I_k(\xi)=\sum_{v\in\mathcal{ V}} v_k \xi(v).$$
Set ${\boldsymbol I}(\xi) := (I_0(\xi),\ldots,I_d(\xi))$. Assume that the set of velocities is chosen in such a way that the unique quantities
conserved by the random walk dynamics described above are mass and momentum: $\sum_{x\in D_N^d} {\boldsymbol I}(\eta_x)$. Two examples of sets of velocities satisfying these conditions can be found at \cite{emy}.

For each chemical potential ${\boldsymbol \lambda} = (\lambda_0,\ldots,\lambda_d) \in \mathbb{ R}^{d+1}$, denote by $m_\mathbb{ \lambda}$ the measure on $\{0,1\}^\mathcal{ V}$ given by
\begin{equation}\label{mprod}
m_\mathbb{ \lambda} (\xi) = \frac{1}{Z({\boldsymbol \lambda})} \exp\left\{\mathbb{ \lambda}\cdot {\boldsymbol I}(\xi)\right\},
\end{equation}
where $Z({\boldsymbol \lambda})$ is a normalizing constant. Note that $m_{\boldsymbol \lambda}$ is a product measure on $\{0,1\}^\mathcal{ V}$, i.e., that the variables $\{\xi(v): v\in\mathcal{ V}\}$ are independent under $m_{\boldsymbol \lambda}$.

Denote by $\mu_{\boldsymbol \lambda}^N$ the product measure on $X_N$, with marginals given by 
\begin{equation*}
\mu_{\boldsymbol \lambda}^N \{\eta: \eta(x,\cdot) = \xi\} = m_{\boldsymbol \lambda}(\xi),
\end{equation*}
for each $\xi$ in $\{0,1\}^\mathcal{ V}$ and $x\in D_N^d$. Note that $\{\eta(x,v): x\in D_N^d, v\in\mathcal{ V}\}$ are independent variables under $\mu_{\boldsymbol \lambda}^N$,
and that the measure $\mu_{\boldsymbol \lambda}^N$ is invariant for the exclusion process with periodic boundary condition.

The expectation under $\mu_{\boldsymbol \lambda}^N$ of the mass and momentum are given by
\begin{eqnarray*}
\rho({\boldsymbol \lambda})&:=& E_{\mu_{\boldsymbol \lambda}^N} \left[ I_0(\eta_x)\right] = \sum_{v\in\mathcal{ V}} \theta_v({\boldsymbol \lambda}),\\
p_k({\boldsymbol \lambda}) &:=& E_{\mu_{\boldsymbol \lambda}^N} \left[ I_k(\eta_x)\right] = \sum_{v\in\mathcal{ V}} v_k \theta_v({\boldsymbol \lambda}).
\end{eqnarray*}
In this formula $\theta_v({\boldsymbol \lambda})$ denotes the expected value of the density of particles with velocity $v$ under $m_{\boldsymbol \lambda}$:
$$\theta_v({\boldsymbol \lambda}):= E_{m_{\boldsymbol \lambda}} \left[\xi(v)\right] = \frac{\exp\left\{\lambda_0 + \sum_{k=1}^d \lambda_k v_k\right\}}{1+ \exp\left\{\lambda_0 + \sum_{k=1}^d \lambda_k v_k\right\}}.$$

Denote by $(\rho,{\boldsymbol p})({\boldsymbol \lambda}):= (\rho({\boldsymbol \lambda}),p_1({\boldsymbol \lambda}),\ldots, p_d({\boldsymbol \lambda}))$ the map that associates the chemical potential to the vector of density and momentum. It is possible to prove that $(\rho,{\boldsymbol p})$ is a diffeomorphism onto ${\mf U}\subset \mathbb{ R}^{d+1}$, the interior of the convex envelope of $\left\{ {\boldsymbol I}(\xi), \xi\in\{0,1\}^\mathcal{ V}\right\}$. Denote by $\Lambda = (\Lambda_0,\ldots,\Lambda_d): {\mf U}\to\mathbb{ R}^{d+1}$ the inverse of $(\rho, {\boldsymbol p})$. This correspondence allows one to parameterize the invariant states by the density and momentum: for each $(\rho,{\boldsymbol p})$ in ${\mf U}$ we have a product measure $\nu_{\rho,{\boldsymbol p}}^N = \mu_{\Lambda(\rho,{\boldsymbol p})}^N$ on $X_N$.

\subsection{Dynamical large deviations}

Fix $T>0$, let $\mathcal{ M}_{+}$
be the space of finite positive measures on $D^d$ endowed with the weak
topology, and let $\mathcal{ M}$ be the space of bounded variation signed measures on $D^d$
endowed with the weak topology. Let $\mathcal{ M}_{+}\times\mathcal{ M}^{d}$ be the cartesian
product of these spaces endowed with the product topology, which
is metrizable. Let also $\mathcal{M}^0$ be the subset of $\mathcal{M}_{+}\times\mc M^d$ of all
absolutely continuous measures with respect to the Lebesgue measure satisfying:

\begin{eqnarray*}
\mc{M}^0=\big\{&\pi&\in\mc{M}_{+}\times\mc M^d:\pi(du)=(\rho,\bs p)(u)du \;\; \hbox{ and }\\
&0&\leq \rho(u)\leq |\mc V|\;, |p_k(u)|\leq \breve{v}|\mc V|, k=1,\ldots,d,  \hbox{ a.e.} \big\}\, , 
\end{eqnarray*}
where $\breve{v} = \max_{v\in\mc V}v_1$. Let $D([0,T],\mathcal{ M}_{+}\times \mathcal{ M}^{d})$ be the set of right continuous functions with left limits taking values on $\mathcal{ M}_{+}\times\mathcal{ M}^{d}$ endowed with the Skorohod topology. $\mathcal{M}^0$ is a closed subset of $\mathcal{M}_{+}\times\mc M^d$ and $D([0,T],\mc M^0)$ is a closed subset of $D([0,T],\mc M_{+}\times \mc M^d)$.

For a measure $\pi\in\mc M$, denote by $\<\pi,G\>$ the integral of $G$ with respect to $\pi$.

Let $\Omega_T = (0,T)\times{D^d}$ and $\overline{\Omega_T} =
[0,T]\times\overline{{D^d}}$. For $1\leq m,n\leq +\infty$, denote
by $\mc C^{m,n}(\overline{\Omega_T})$ the space of functions $G =
G_t(u): \overline{\Omega_T}\to \bb R$ with $m$ continuous derivatives
in time and $n$ continuous derivatives in space. We also denote by
$\mc C^{m,n}_0(\overline{\Omega_T})$ (resp. $\mc
C^{\infty}_c(\Omega_T)$) the set of functions in $\mc
C^{m,n}(\overline{\Omega_T})$ (resp. $\mc
C^{\infty,\infty}(\overline{\Omega_T})$) which vanish at
$[0,T]\times\Gamma$ (resp. with compact support in $\Omega_T$).

Let the energy $\mc Q:D([0,T],\mc M^0)\to[0,\infty]$ be given by
\begin{eqnarray*}
\mc Q(\pi) = \sum_{k=0}^d\sum_{i=1}^d \sup_{G\in\mc C^{\infty}_c(\Omega_T)}
\Big\{ 2 \int_0^T dt\; \langle p_{k,t},\partial_{u_i}G_t\rangle 
- \int_0^Tdt\int_{{D^d}} G(t,u)^2\, du \Big\}\, . 
\end{eqnarray*}
where $p_{k,t}(u) = p_k(t,u)$ and $p_{0,t}(u) = \rho(t,u)$.

For each $G\in\mc C_0^{1,2}(\overline{\Omega_T})\times[\mc C_0^2(\overline{D^d})]^d$ and each measurable
function $\gamma:\overline{D^d}\to[0,|\mc V|]\times[-\breve{v}|\mc V|,\breve{v}|\mc V|]^d$, $\gamma=(\rho_0,\bs p_0)$, let $\hat J_G = \hat
J_{G,\gamma,T}:D([0,T],\mc M^0)\to\bb R$ be the functional given by
\begin{eqnarray*}
\hat J_{G}(\pi) & = & \int_{{D^d}} G(T,u)\cdot ({\rho},{{\boldsymbol p}})(T,u)du - \int_{{D^d}} G(0,u)\cdot(\rho_0, {\boldsymbol p}_0) (u) du\\
&-&\int_0^T dt \int_{{D^d}} du \left\{ ({\rho},{{\boldsymbol p}})(t,u)\cdot\partial_t G(t,u) + \frac{1}{2} ({\rho},{{\boldsymbol p}})(t,u) \cdot \sum_{1\leq i\leq d} \partial_{u_i}^2 G(t,u)\right\}\\
&+&\int_0^T dt \int_{\{1\}\times\mathbb{ T}^{d-1}}\!\!\! dS\,\, b(\tilde{u})\cdot \partial_{u_1} G(t,u) - \int_0^T dt \int_{\{0\}\times\mathbb{ T}^{d-1}}\!\!\! dS\,\, a(\tilde{u})\cdot \partial_{u_1} G(t,u)\\
&+&\int_0^T dt \int_{{D^d}} du\,\,  \sum_{v\in\mathcal{ V}} \tilde{v}\cdot \chi(\theta_v(\Lambda({\rho},{{\boldsymbol p}}))) \sum_{1\leq i \leq d} v_i \partial_{u_i} G(t,u)\\
&-& \int_0^T dt\int_{{D^d}}du \sum_{v\in\mc V} \left(\sum_{k=0}^d v_k \partial_{x_i}G_t^k(u) \right)^2\chi(\theta_v(\Lambda(\rho,p))),
\end{eqnarray*}
where $\chi(r)=r(1-r)$ is the static compressibility and $\pi_t(du) =
(\rho,\bs p)(t,u) du$. Define $J_G = J_{G,\gamma,T}:D([0,T],\mc M_{+}\times\mc M^d)\to\bb R$ by
\begin{equation*}
J_G (\pi) =
\begin{cases}
\displaystyle \hat J_G (\pi) & \hbox{ if }  \pi \in D([0,T],\mc M^0) ,\\ 
+\infty & \hbox{ otherwise .}
\end{cases}
\end{equation*}

We define the rate functional
$I_T(\cdot|\gamma):D([0,T],\mc M_{+}\times\mc M^d)\to[0,+\infty]$ as
\begin{equation*}
I_T(\pi|\gamma) =
\begin{cases}
\displaystyle \sup_{G\in\cc^{1,2}_0(\overline{\Omega_T})\times[\mc C_0^2(\overline{D^d})]^d}
\!\big\{J_G(\pi)\big\} & \hbox{ if } \mc Q(\pi)<\infty\, ,\\ 
+\infty & \hbox{ otherwise .}
\end{cases}
\end{equation*}

We now present the main result of this article, whose proof is given in Section \ref{sec6}, which is the dynamical large deviations for this boundary driven exclusion process with many conserved quantities.

\begin{theorem} 
\label{mt} 
Fix $T>0$ and a measurable function $(\rho_0,\bs p_0):{D^d}\to[0,|\mc V|]\times[-\breve{v}|\mc V|,\breve{v}|\mc V|]^d$. Consider
a sequence $\eta^N$ of configurations in $X_N$ associated to $\gamma=(\rho_0,\bs p_0)$
in the sense that:
\begin{equation*}
\lim_{N\to\infty} \<\pi_0^N (\eta^N) , G\> \; =\;
\int_{D^d} G(u) \rho_0(u) \, du, 
\end{equation*}
and
\begin{equation*}
\lim_{N\to\infty} \<\pi_k^N (\eta^N) , G\> \; =\;
\int_{D^d} G(u) p_k(u) \, du, \quad k=1,\ldots,d,
\end{equation*}
for every continuous function $G:\overline{{D^d}}\to\bb R$. Then, the
measure $Q_{\eta^N}=\bb P_{\eta^N}(\pi^N)^{-1}$ on $D([0,T],\cm_{+}\times \mc M^d)$
satisfies a large deviation principle with speed $N^d$ and rate
function $I_T(\cdot|\gamma)$. Namely, for each closed set $\cc\subset
D([0,T],\cm_{+}\times\mc M^d)$,
\begin{equation*}
\limsup_{N\to\infty}\frac{1}{N^d}\log
Q_{\eta^N}(\cc)\leq - \inf_{\pi\in\cc} I_T(\pi|\gamma)
\end{equation*}
and for each open set $\mc{O}\subset D([0,T],\cm_{+}\times\mc M^d)$,
\begin{equation*}
\liminf_{N\to\infty}\frac{1}{N^d}\log Q_{\eta^N}(\mc{O})\geq -
\inf_{\pi\in\mc{O}} I_T(\pi|\gamma)\;.
\end{equation*}
Moreover, the rate function $I_T(\cdot|\gamma)$ is lower
semicontinuous and has compact level sets.
\end{theorem}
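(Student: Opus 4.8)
The plan is to follow the by-now standard scheme for dynamical large deviations of boundary driven conservative systems, as developed in \cite{flm} and refined for the present model. The proof splits into four essentially independent parts: (i) exponential tightness of the sequence $Q_{\eta^N}$ on $D([0,T],\mc M_+\times\mc M^d)$; (ii) the large deviations upper bound on compact sets; (iii) the large deviations lower bound on open sets; and (iv) the structural properties of $I_T(\cdot|\gamma)$ (lower semicontinuity and compact level sets), together with the passage from the compact-set upper bound to the closed-set upper bound via exponential tightness.

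For the upper bound, I would begin from the exponential martingales $M_t^{G} = \exp\{N^d(\langle\pi_t^N,G_t\rangle - \langle\pi_0^N,G_0\rangle - \int_0^t N^{-d}e^{-N^d\langle\pi_s^N,G_s\rangle}\partial_s(e^{N^d\langle\pi_s^N,G_s\rangle})\,ds)\}$ associated to the generator $\mathcal L_N$, for $G$ in the class $\mc C_0^{1,2}(\overline{\Omega_T})\times[\mc C_0^2(\overline{D^d})]^d$. Expanding the Radon--Nikodym term and using the replacement lemma (local ergodicity, available from the hydrodynamic analysis in \cite{s}) to replace local functions of $\eta$ by functions of the empirical density, one identifies the exponential rate of $M_t^G$ with $\hat J_G(\pi)$ up to a vanishing error, so that $\Es_{\eta^N}[M_T^G]=1$ gives, for every $G$, $\limsup_N N^{-d}\log Q_{\eta^N}(\cdot)\le -J_G$ on the relevant event; a minimax argument (Varadhan's lemma machinery / the projective limit) over a countable dense family of test functions then yields $\limsup_N N^{-d}\log Q_{\eta^N}(\mathcal K)\le -\inf_{\pi\in\mathcal K}I_T(\pi|\gamma)$ for compact $\mathcal K$. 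The energy estimate $\mc Q(\pi)<\infty$ $Q_{\eta^N}$-almost surely (in the exponential sense) is proved separately, again via suitable exponential martingales, and is what allows one to restrict to $D([0,T],\mc M^0)$ and to control the gradient terms; exponential tightness upgrades the bound to all closed sets.

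For the lower bound, the key input is the $I_T(\cdot|\gamma)$-density theorem proved in Section \ref{sec5}: every trajectory $\pi$ with $I_T(\pi|\gamma)<\infty$ can be approximated, in the Skorohod topology and in $I_T$-value, by ``nice'' trajectories that are smooth solutions of the hydrodynamic equation perturbed by a smooth external field $H$. For such a perturbed trajectory one performs a change of measure: the tilted process with generator corresponding to the field $H$ has the perturbed trajectory as its law of large numbers (this is again a hydrodynamic limit, of the same type as in \cite{s}), and a standard entropy/Jensen computation shows that the Radon--Nikodym cost of this tilt is exactly $\hat J_H$ of the limit trajectory, which by construction equals $I_T$ of that trajectory. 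This gives the lower bound $\liminf_N N^{-d}\log Q_{\eta^N}(\mathcal O)\ge -I_T(\pi|\gamma)$ for $\pi$ in the nice class, and the density theorem plus lower semicontinuity of $I_T$ extends it to arbitrary $\pi\in\mathcal O$. Finally, lower semicontinuity of $I_T(\cdot|\gamma)$ follows since it is a supremum of continuous functionals $J_G$, and compactness of level sets follows by combining this with exponential tightness and the energy bound (the level set $\{I_T\le M\}$ is contained in a set with uniformly bounded energy, hence relatively compact, and closed).

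The main obstacle, and the point where this paper departs from \cite{flm}, is the $I_T(\cdot|\gamma)$-density step: the energy estimates used in \cite{flm} to regularize finite-rate-function trajectories are not available here because of the velocity variable and the collision term, so one must instead argue density through the structure of the rate functional directly — approximating first in time, then mollifying in space while carefully tracking how the collision contribution $\sum_{v}(\sum_k v_k\partial_{x_i}G^k)^2\chi(\theta_v(\Lambda(\rho,p)))$ and the boundary terms behave under the approximation, and using the convexity of the relevant Legendre transforms to keep the $I_T$-cost under control. Everything else — the replacement lemmas, the exponential martingale estimates, the superexponential estimates, and the change-of-measure computation for the lower bound — is an adaptation of the corresponding arguments for the one-component boundary driven exclusion process, made possible by the hydrodynamic results of \cite{s} for the present model with many conserved quantities.
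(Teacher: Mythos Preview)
Your overall architecture matches the paper exactly: upper bound via the exponential martingales $M_T^G$ combined with superexponential replacement and energy estimates, then a minimax over test functions; lower bound via tilting by a smooth field $H$ and the $I_T(\cdot|\gamma)$-density of $\Pi$ established in Section~\ref{sec5}; and exponential tightness to pass from compact to closed sets. Two of your sub-arguments, however, would not go through as written.

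First, lower semicontinuity of $I_T(\cdot|\gamma)$ is \emph{not} just ``supremum of continuous functionals'': $\hat J_G(\pi)$ contains the nonlinear terms $\int_0^T\!\int_{D^d}\chi(\theta_v(\Lambda(\rho,\bs p)))\,\tilde v\cdot\nabla G$ and $\int_0^T\!\int_{D^d}\chi(\theta_v(\Lambda(\rho,\bs p)))(\tilde v\cdot\nabla G)^2$, which are not continuous for the Skorohod/weak topology on $D([0,T],\mc M^0)$. The paper (Section~\ref{sec4}) instead bounds $|\!|\!|\partial_t(\rho,\bs p)|\!|\!|_{-1}^2$ and $\mc Q(\pi)$ uniformly on each level set (Lemma~\ref{lem03}, Corollary~\ref{corls}) and invokes an Aubin--Lions--Simon argument (Lemma~\ref{lem02}) to upgrade weak $L^2$ convergence on a level set to strong $L^2$ convergence; only then does $J_G(\pi^n)\to J_G(\pi)$. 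Compactness of level sets is obtained from the same estimates together with the modulus-of-continuity bound~\eqref{cont}, not from probabilistic exponential tightness.

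Second, your sketch of the density step (``mollify in space, use convexity of Legendre transforms'') is not what the paper does, and spatial mollification would clash with the boundary data and the nonlinearity $\chi(\theta_v(\Lambda(\cdot)))$. The actual route in Section~\ref{sec5} is: (i) prepend a short hydrodynamic segment to land in $\Pi_1$; (ii) form the convex combination $\pi^\varepsilon=(1-\varepsilon)\pi+\varepsilon\tau$ with the hydrodynamic solution $\tau$, pushing the profile into the interior of $\mf U$; (iii) approximate the field $H\in H^1_0(\pi)$ from Lemma~\ref{lem05} by smooth $H^n$ and solve~\eqref{f05} with $H^n$. Step~(ii) only yields $\limsup_\varepsilon I_T(\pi^\varepsilon|\gamma)\le I_T(\pi|\gamma)$ because of the algebraic fact, specific to this model, that $\theta_v(\Lambda(\cdot))$ is \emph{affine} (Lemma~\ref{quasiconcavidade}); this is precisely what replaces the energy-based regularization of \cite{flm} that you correctly flag as unavailable here.
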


\section{Hydrodynamics}
\label{sec3}

\renewcommand{\labelenumi}{({\bf H\theenumi})}

Fix $T>0$ and let $(B,\|\cdot\|_B)$ be a Banach space. We denote by $L^2([0,T],B)$ the Banach space of measurable functions $U:[0,T]\to B$ for which
$$\|U\|_{L^2([0,T],B)}^2 = \int_0^T \|U_t\|_B^2 dt <\infty.$$
Moreover, we denote by $H^1({D^d})$ the Sobolev space of measurable functions in $L^2({D^d})$ that have generalized derivatives in $L^2({D^d})$.

For $x = (x_1,\tilde{x})\in \{0,1\}\times\mathbb{T}^{d-1}$, let
\begin{equation}\label{funcaod}
d(x) =\left\{
\begin{array}{cc}
a(\tilde{x}) = \sum_{v\in\mathcal{V}} (\alpha_v(\tilde{x}),v_1\alpha_v(\tilde{x}),\ldots,v_d\alpha_v(\tilde{x})),&\hbox{if~~}x_1=0,\\[10pt] 
b(\tilde{x}) = \sum_{v\in\mathcal{V}} (\beta_v(\tilde{x}),v_1\beta_v(\tilde{x}),\ldots,v_d\beta_v(\tilde{x})),&\hbox{if~~}x_1=1.
\end{array}
\right.
\end{equation}

Fix a bounded density profile $\rho_0:{D^d} \to \mathbb{ R}_{+}$, and a bounded momentum
profile ${\boldsymbol p}_0: {D^d} \to \mathbb{ R}^d$.
A bounded function $({\rho},{{\boldsymbol p}}): [0,T]\times {D^d} \to \mathbb{ R}_{+}\times \mathbb{ R}^d$ is a weak solution
of the system of parabolic partial differential equations
\begin{equation}\label{problema2}
\left\{ 
\begin{array}{c}
\partial_t (\rho,{\boldsymbol p}) + \sum_{v\in \mathcal{ V}} \tilde{v}\left[v\cdot \nabla\chi(\theta_v(\Lambda(\rho,{\boldsymbol p})))\right] = \frac{1}{2}\Delta (\rho,{\boldsymbol p}),\\[10pt]
(\rho,{\boldsymbol p})(0,\cdot) =  (\rho_0,{\boldsymbol p}_0)(\cdot) \hbox{~and~} (\rho,{\boldsymbol p})(t,x) = d(x), x\in \{0,1\}\times\mathbb{T}^{d-1},
\end{array}
\right.
\end{equation}
if for every vector valued function $H:[0,T]\times {D^d}\to\mathbb{ R}^{d+1}$ of class $C^{1,2}\left([0,T]\times {D^d}\right)$
vanishing at the boundary, we have
$$
\int_{{D^d}} H(T,u)\cdot ({\rho},{{\boldsymbol p}})(T,u)du - \int_{{D^d}} H(0,u)\cdot(\rho_0, {\boldsymbol p}_0) (u) du
$$
$$
=\int_0^T dt \int_{{D^d}} du \left\{ ({\rho},{{\boldsymbol p}})(t,u)\cdot\partial_t H(t,u) + \frac{1}{2} ({\rho},{{\boldsymbol p}})(t,u) \cdot \sum_{1\leq i\leq d} \partial_{u_i}^2 H(t,u)\right\}
$$
$$
-\int_0^T dt \int_{\{1\}\times\mathbb{ T}^{d-1}} dS\,\, b(\tilde{u})\cdot \partial_{u_1} H(t,u) + \int_0^T dt \int_{\{0\}\times\mathbb{ T}^{d-1}} dS\,\, a(\tilde{u})\cdot \partial_{u_1} H(t,u)
$$
$$
-\int_0^T dt \int_{{D^d}} du\,\,  \sum_{v\in\mathcal{ V}} \tilde{v}\cdot \chi(\theta_v(\Lambda({\rho},{{\boldsymbol p}}))) \sum_{1\leq i \leq d} v_i \partial_{u_i} H(t,u),
$$
$dS$ being the Lebesgue measure on $\mathbb{T}^{d-1}$. 

We say that that the solution $(\rho,{\boldsymbol p})$ has finite energy if its components belong to $L^2([0,T],H^1({D^d}))$:
$$\int_0^T ds \left(\int_{{D^d}} \|\nabla \rho(s,u)\|^2 du\right)<\infty,$$
and 
$$\int_0^T ds \left(\int_{{D^d}} \|\nabla p_k(s,u)\|^2 du\right)<\infty,$$
for $k=1,\ldots,d$, where $\nabla f$ represents the generalized gradient of the function $f$.

In \cite{s} the following theorem was proved:

\begin{theorem}\label{limhyd2} Let $(\mu^N)_N$ be a sequence of probability measures on $X_N$ associated to the profile $(\rho_0,{\boldsymbol p}_0)$. Then, for every $t\geq 0$, for every continuous function $H:{D^d}\to \mathbb{ R}$ vanishing at the boundary, and for every $\delta > 0$,
$$\lim_{N\to\infty} \mathbb{ P}_{\mu^N} \left[ \left| \frac{1}{N^d} \sum_{x\in D_N^d} H\left(\frac{x}{N}\right) I_0(\eta_x(t)) - \int_{{D^d}} H(u) {\rho}(t,u)du\right|>\delta\right] = 0,$$
and for $1\leq k \leq d$
$$\lim_{N\to\infty} \mathbb{ P}_{\mu^N} \left[ \left| \frac{1}{N^d} \sum_{x\in D_N^d} H\left(\frac{x}{N}\right) I_k(\eta_x(t)) - \int_{{D^d}} H(u) {p_k}(t,u)du\right|>\delta\right] = 0,$$
where $({\rho},{\boldsymbol p})$ has finite energy and is the unique weak solution of equation (\ref{problema2}).
\end{theorem}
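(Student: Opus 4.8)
The plan is to prove the hydrodynamic limit by the entropy method of Guo, Papanicolaou and Varadhan, in the boundary-driven formulation used in \cite{flm}, combined with the local-equilibrium analysis of the collision dynamics from \cite{emy}; this is the route taken in \cite{s}, so I only sketch the architecture. Write $\pi^N_t$ for the empirical measure with components $\pi^N_{k,t}=N^{-d}\sum_{x\in D_N^d}I_k(\eta_x(t))\,\delta_{x/N}$, $k=0,\ldots,d$. Since $0\le I_0(\eta_x)\le|\mc V|$ and $|I_k(\eta_x)|\le\breve v|\mc V|$, each $\pi^N_t$ lives in a fixed compact subset of $\mc M_+\times\mc M^d$, so it suffices to prove (i) tightness of $Q_{\mu^N}=\Pb_{\mu^N}(\pi^N)^{-1}$, (ii) that every limit point is supported on finite-energy weak solutions of \eqref{problema2}, and (iii) uniqueness of such solutions; then convergence in probability and the two displayed limits follow.

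For tightness I would use Dynkin's formula: for $H\in\mc C^{1,2}_0([0,T]\times D^d)^{d+1}$,
\[
M^{H,N}_t=\langle\pi^N_t,H_t\rangle-\langle\pi^N_0,H_0\rangle-\int_0^t(\partial_s+N^2\mathcal{L}_N)\langle\pi^N_s,H_s\rangle\,ds
\]
is a martingale. Because $\langle\pi^N,H\rangle$ depends on $\eta$ only through $\{\bs I(\eta_x)\}_x$, which is invariant under collisions, the collision generator $\mathcal{L}_N^c$ drops out; the symmetric exclusion part produces the discrete Laplacian of $H$ plus boundary terms supported on $x_1\in\{1,N-1\}$, and the $N^{-1}p(\cdot,v)$ part produces the drift. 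A short computation bounds the quadratic variation by $C(H)N^{-d}$, so $M^{H,N}\to0$ in $L^2$, and tightness of $(\pi^N)$ in $D([0,T],\mc M_+\times\mc M^d)$ follows from Aldous' criterion applied to $\langle\pi^N,H\rangle$ over a countable dense family, together with compactness of the state space.

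For the identification of limit points, the substantive point is to pass to the limit in the nonlinear drift term $N^{-d}\sum_{x,v,i}(\partial_{u_i}H)(x/N)\cdot\tilde v\,v_i\,\eta(x,v)[1-\eta(x+e_i,v)]$. I would first use the $O(N^d)$ bound on the relative entropy of the law of $\eta(t)$ with respect to a fixed reference product measure $\nu^N_{\rho_*,\bs p_*}$, and the associated bound on the Dirichlet form, to obtain the energy estimate showing the limiting fields lie in $L^2([0,T],H^1(D^d))$. Then the one- and two-block estimates replace $\eta(x,v)[1-\eta(x+e_i,v)]$ by $\chi$ of the mesoscopic box average $\overline{\eta^\ell}(x,v)$, and that average by $\theta_v\big(\Lambda(\overline{I^\ell_0}(x),\ldots,\overline{I^\ell_d}(x))\big)$; the boundary generator $\mathcal{L}^b_N$ yields, after the same local-equilibrium replacement, the surface integrals against $a$ and $b$, i.e.\ the Dirichlet condition $(\rho,\bs p)=d$ on $\Gamma$. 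Passing to the limit in $M^{H,N}_T=0$ then forces every trajectory in the support of a limit point to satisfy the integral identity defining a weak solution of \eqref{problema2}. Finally, a Gronwall energy argument, using that $\Lambda$ and $r\mapsto\chi(\theta_v(r))$ are smooth on the relevant compact set and that $(\rho,\bs p)$ is bounded, gives uniqueness of finite-energy weak solutions, so each limit point is the Dirac mass at that solution.

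The hard part will be the one-block estimate in the presence of collisions, namely showing that in a mesoscopic box the process equilibrates to the grand-canonical measure $\nu^N_{\rho,\bs p}$ parametrized only by mass and momentum. This is precisely where the hypothesis that $\mc V$ is chosen so that mass and momentum are the only conserved quantities enters, and where the collision dynamics is indispensable: without it each $\eta(\cdot,v)$ would be separately conserved and could not be expressed, in the limit, as a function of $(\rho,\bs p)$. A secondary technical difficulty is pushing the energy estimate up to the boundary $\Gamma$, needed to control the $H^1$ norm of the limit near the reservoirs.
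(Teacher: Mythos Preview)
Your sketch is reasonable and follows the standard entropy-method architecture, but note that the paper itself does \emph{not} prove Theorem~\ref{limhyd2}: it is quoted from \cite{s} (``In \cite{s} the following theorem was proved:''), and no proof or sketch is given here. So there is nothing in the present paper to compare your argument against; the hydrodynamic limit is treated as an input.

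That said, your outline is consistent with the route one expects \cite{s} to take, and with how the present paper uses that result: the replacement lemmas and energy estimates you invoke are exactly the ones cited later (Proposition~\ref{dirichlet}, Lemma~\ref{energiachi}, and ``Lemma~3.7 in \cite{s}'' for the replacement), and your remark that the collision generator $\mathcal L_N^c$ vanishes on functions of the conserved quantities, together with the one-block step requiring the assumption that mass and momentum are the \emph{only} conserved quantities, correctly identifies the key mechanism. If you intend to include a proof here rather than cite \cite{s}, your sketch is an adequate roadmap; otherwise a single sentence referring to \cite{s} is all the paper provides.
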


\section{The rate function $I_T(\cdot | \gamma)$}
\label{sec4}


We examine in this section the rate function $I_T(\cdot|\gamma)$. The
main result, presented in Theorem \ref{th4} below, states that
$I_T(\cdot | \gamma)$ has compact level sets. The proof relies on two
ingredients. The first one, stated in Lemma \ref{lem03}, is an
estimate of the energy and of the $H_{-1}$ norm of the time derivative
of a trajectory in terms of the rate function. The second one, stated
in Lemma \ref{lem02}, establishes that sequences of trajectories, with
rate function uniformly bounded, which converges weakly in $L^2$
converge in fact strongly. We follow the strategy introduced in \cite{flm}.

Recall that $V$ is an open neighborhood of $D^d$, and consider, for each $v\in\mc V$, smooth functions $\kappa_k^v:V\to(0,1)$ in $C^2(V)$, for $k=0,\ldots,d$. We assume that each $\kappa_k^v$ has its image contained in some compact subset of $(0,1)$, that the restriction of $\kappa = \sum_{v\in\mc V}(\kappa_0^v,v_1\kappa_1^v,\ldots,v_d\kappa_d^v)$ to $\{0\}\times\mathbb{T}^{d-1}$ equals the vector valued function $a(\cdot)$ defined in \eqref{funcaod}, and that the restriction of $\kappa$ to $\{1\}\times\mathbb{T}^{d-1}$ equals the vector valued function $b(\cdot)$, also defined in \eqref{funcaod}, in the sense that $\kappa(x) = d(x_1,\tilde{x})$ if $x\in\{0,1\}\times\mathbb{T}^{d-1}$.

Let $L^2(D^d)$ be the Hilbert space of functions $G:D^d
\to \bb R$ such that $\int_{D^d} | G(u) |^2 du <\infty$ equipped with
the inner product
\begin{equation*}
\<G,F\>_2 =\int_\Omega G(u) \, F (u) \, du\; ,
\end{equation*}
and the norm of $L^2(D^d)$ is denoted by $\| \cdot \|_2$.

Recall that $H^1(D^d)$ is the Sobolev space of functions $G$ with
generalized derivatives $\partial_{u_1} G, \dots , \partial_{u_d} G$
in $L^2(D^d)$. $H^1(D^d)$ endowed with the scalar product
$\<\cdot, \cdot\>_{1,2}$, defined by
\begin{equation*}
\<G,F\>_{1,2} = \< G, F \>_2 + \sum_{j=1}^d
\<\partial_{u_j} G \, , \, \partial_{u_j} F \>_2\;,
\end{equation*}
is a Hilbert space. The corresponding norm is denoted by
$\|\cdot\|_{1,2}$.

Recall that we denote by
${\mathcal C}_{c}^\infty ({D^d})$ the set of infinitely
differentiable functions $G:{D^d} \to \R$, with compact support in
${D^d}$. Denote by
$H^1_0({D^d})$ the closure of $C_c^{\infty}({D^d})$ in
$H^1({D^d})$. Since ${D^d}$ is bounded, by Poincar\'e's inequality,
there exists a finite constant $C$ such that for all $G\in
H^1_0({D^d})$
\begin{equation*}
\|G\|^2_2 \;\le\;  C \sum_{j=1}^d \<\partial_{u_j} G \, , \, 
\partial_{u_j} G \>_2 \; .
\end{equation*}
This implies that, in $H^1_0 ({D^d})$
\begin{equation*}
\|G\|_{1,2,0} \;=\; \Big\{ \sum_{j=1}^d
\<\partial_{u_j} G \, , \, \partial_{u_j} G \>_2  \Big\}^{1/2}
\end{equation*}
is a norm equivalent to the norm $\|\cdot \|_{1,2}$.  Moreover, $H^1_0
({D^d})$ is a Hilbert space with inner product given by
\begin{equation*}
\< G \, , \, J \>_{1,2,0}
\;=\; \sum_{j=1}^d
\<\partial_{u_j} G \, , \, \partial_{u_j} J \>_2 \; .
\end{equation*}

To assign boundary values along the boundary $\Gamma$ of ${D^d}$ to
any function $G$ in $H^1({D^d})$, recall, from the trace Theorem
(\cite{z}, Theorem 21.A.(e)), that there exists a continuous linear
operator $\Tr:H^1({D^d})\to L^2(\Gamma)$, called trace, such that $\Tr (G) =
G\big|_{\Gamma}$ if $G\in H^1({D^d})\cap \mc C(\overline{{D^d}})$.
Moreover, the space $H^1_0({D^d})$ is the space of functions $G$ in
$H^1({D^d})$ with zero trace (\cite{z}, Appendix (48b)):
\begin{equation*}
H^1_0({D^d}) = \left\{G\in H^1({D^d}):\; \Tr (G) = 0\right\}\,.
\end{equation*}

Finally, denote by $H^{-1}({D^d})$ the dual of $H^1_0({D^d})$.
$H^{-1}({D^d})$ is a Banach space with norm $\Vert\cdot\Vert_{-1}$
given by
\begin{equation*}
\Vert v\Vert^2_{-1} = \sup_{G\in\mc C^{\infty}_c({D^d})}
\left\{2\langle v,G\rangle_{-1,1} -
\int_{{D^d}} \Vert \nabla G(u)\Vert^2du \right\}\, , 
\end{equation*}
where $\langle v,G\rangle_{-1,1}$ stands for the values of the linear
form $v$ at $G$.

For each $G\in\mc C^{\infty}_c(\Omega_T)$ and each integer $1\leq
i\leq d$, let $\mc Q_{i,k}^G:D([0,T],\mc M^0)\to\bb R$ be the functional
given by 
\begin{eqnarray*}
\mc Q_{i,k}^G(\pi) = 2\int_0^Tdt\;\langle\pi_t^k,\partial_{u_i}G_t\rangle -
\int_0^Tdt\int_{{D^d}} du\; G(t,u)^2\, , 
\end{eqnarray*}
where $\pi = (\pi^0,\pi^1,\ldots,\pi^d)$. Recall, from subsection 2.2, that the energy $\mc Q(\pi)$ is given by
\begin{equation*}
\mc Q(\pi) = \sum_{k=0}^d\sum_{i=1}^d\mc Q_{i,k}(\pi),\;\; \hbox{ with } \;\;\mc
Q_{i,k}(\pi) = \sup_{G\in\mc C_c^{\infty}(\Omega_T)}{\mc Q_{i,k}^G(\pi)}\, . 
\end{equation*}

The functional $\mc Q^G_{i,k}$ is convex and continuous in the Skorohod
topology. Therefore $\mc Q_{i,k}$ and $\mc Q$ are convex and lower
semicontinuous. Furthermore, it is well known that a measure
$\pi(t,du) = (\rho,\bs p)(t,u) du$ in $D([0,T], \mc M_{+}\times\mc M^d)$ has finite energy,
$\mc Q(\pi) < \infty$, if and only if its density $\rho$ and its momentum $\bs p$ belong to
$L^2([0,T] , H^1({D^d}))$. In such case
\begin{equation*}
\hat{\mc Q}(\pi) \;:=\;
\sum_{k=0}^d\int_0^Tdt\int_{{D^d}}du\;\Vert\nabla p_{k,t}(u)\Vert^2 \;<\; \infty,
\end{equation*}
where $p_{0,t}(u) = \rho(t,u)$. We also have that $\mc Q(\pi) = \hat{\mc Q}(\pi)$.

Let $D_{\gamma} = D_{\gamma,b}$ be the subset of $C([0,T],\mc M^0)$
consisting of all paths $\pi(t,du) = (\rho,\bs p)(t,u) du$ with initial
profile $\gamma(\cdot) = (\rho_0,\bs p_0)(\cdot)$, finite energy $\mc Q(\pi)$ (in
which case $\rho_t$ and $\bs p_t$ belong to $H^1({D^d})$ for almost all $0\leq
t\leq T$ and so $\Tr(\rho_t)$ is well defined for those $t$) and such
that $\Tr(\rho_t) = d_0$ and $\Tr(p_{k,t})=d_k$, $k=1,\ldots,d$, for almost all $t$ in $[0,T]$, where $d(\cdot) = (d_0(\cdot),d_1(\cdot),\ldots,d_d(\cdot))$.

\begin{lemma}
\label{lem01}
Let $\pi$ be a trajectory in $D([0,T],\mc M_{+}\times\mc M^d)$ such that
$I_T(\pi|\gamma)<\infty$. Then $\pi$ belongs to $D_{\gamma}$.
\end{lemma}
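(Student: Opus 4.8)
The plan is to show that finiteness of $I_T(\pi|\gamma)$ forces $\pi$ to satisfy all the defining properties of $D_\gamma$: absolute continuity with density in $\mc M^0$, finite energy, the correct initial profile $\gamma$, continuity of the trajectory, and the correct trace $d(\cdot)$ at the boundary for almost every time. First I would observe that $I_T(\pi|\gamma)<\infty$ implies by definition $\mc Q(\pi)<\infty$, and $J_G(\pi)<\infty$ for every admissible $G$; since $J_G(\pi)=+\infty$ whenever $\pi\notin D([0,T],\mc M^0)$, we immediately get $\pi\in D([0,T],\mc M^0)$, so $\pi_t(du)=(\rho,\bs p)(t,u)\,du$ with the pointwise bounds $0\le\rho\le|\mc V|$ and $|p_k|\le\breve v|\mc V|$ a.e. Finiteness of $\mc Q(\pi)$ is exactly the statement, recalled in the excerpt, that $\rho$ and each $p_k$ lie in $L^2([0,T],H^1(D^d))$, so $\Tr(\rho_t)$ and $\Tr(p_{k,t})$ are well defined for a.e. $t$.

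Next I would extract the initial condition and the continuity in time from $\hat J_G$. The key device is to test against functions $G$ that are supported near $t=0$ (or near a fixed time $t_0$) and to exploit that $\hat J_G(\pi)$ and $\hat J_{-G}(\pi)$ are both finite, which turns the one-sided bound $J_G(\pi)\le I_T(\pi|\gamma)$ into a genuine two-sided control $|\hat J_G(\pi)|\le\text{const}\cdot\|G\|$ for $G$ in a suitable class. Since all the terms in $\hat J_G$ other than the two boundary time-slice terms and the $\partial_t G$ term are bounded by $C\|G\|_{L^1_t L^2_u}$ or $C\|G\|_{L^1_t H^2_u}$ using the a priori bounds on $(\rho,\bs p)$ and on $\nabla(\rho,\bs p)$ (the latter from $\mc Q(\pi)<\infty$), choosing $G$ with $\partial_t G$ concentrated near $t=0$ shows that $\<\pi_t,G\>\to\<\gamma,G\>$ as $t\downarrow 0$ along a full-measure set, i.e. $(\rho,\bs p)(0,\cdot)=\gamma$; the same argument at an interior time $t_0$, comparing one-sided limits, gives that $t\mapsto\pi_t$ has no jumps, hence $\pi\in C([0,T],\mc M^0)$. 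This part is essentially the standard integration-by-parts bookkeeping and should be routine.

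For the boundary values, I would use the functions $\kappa_k^v$ and the associated $\kappa$ recalled just before the lemma, whose restriction to $\Gamma$ is $d(\cdot)$. The idea is to test $\hat J_G$ against functions $G$ that do \emph{not} vanish on $[0,T]\times\Gamma$ but are concentrated in a thin layer of width $\delta$ near the boundary; after integrating by parts the $\tfrac12(\rho,\bs p)\cdot\sum\partial_{u_i}^2 G$ term, the boundary integrals $\int_0^T dt\int_{\{1\}\times\mathbb T^{d-1}} b\cdot\partial_{u_1}G$ and $\int_0^T dt\int_{\{0\}\times\mathbb T^{d-1}} a\cdot\partial_{u_1}G$ combine with the surface terms produced by the integration by parts to produce, in the limit $\delta\to0$, a quantity proportional to $\int_0^T dt\int_\Gamma \big(\Tr(\rho,\bs p)(t)-d\big)\cdot(\text{test})\,dS$. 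Boundedness of this over all such test functions forces $\Tr(\rho_t)=d_0$ and $\Tr(p_{k,t})=d_k$ for a.e. $t$.

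The main obstacle, as usual in this type of argument, is the boundary-trace step: one must carefully choose the layer-supported test functions so that the outer surface contributions survive the $\delta\to0$ limit while the bulk contributions (which are only controlled in $L^2$, not pointwise up to $\Gamma$) vanish, and one must invoke the trace theorem together with the $L^2([0,T],H^1(D^d))$ regularity in a quantitative way; the pointwise-in-space a priori bounds from membership in $\mc M^0$ are what make the bulk terms manageable. Everything else — absolute continuity, the $\mc M^0$ bounds, finite energy, the initial profile, and continuity in $t$ — follows from reading off the structure of $J_G$ and $\hat J_G$ together with the already-established equivalence $\mc Q(\pi)<\infty\iff\rho,\bs p\in L^2([0,T],H^1(D^d))$.
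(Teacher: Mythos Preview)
Your outline is essentially the paper's approach: read off $\pi\in D([0,T],\mc M^0)$ and $\mc Q(\pi)<\infty$ directly from the definition of $I_T$, then extract the initial profile, the boundary trace, and the time-continuity by plugging suitable test functions into $\hat J_G$. Two points deserve correction or sharpening.

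First, a technical slip in the boundary-trace step: the admissible test functions $G$ lie in $\mc C^{1,2}_0(\overline{\Omega_T})\times[\mc C^2_0(\overline{D^d})]^d$, so $G$ \emph{must} vanish on $[0,T]\times\Gamma$. What is \emph{not} forced to vanish is the normal derivative $\partial_{u_1}G$ on $\Gamma$, and that is exactly what carries the boundary information: after one integration by parts of $\tfrac12\int(\rho,\bs p)\cdot\Delta G$, the surface term $\tfrac12\int_\Gamma \Tr(\rho,\bs p)\cdot\partial_{u_1}G\,dS$ combines with the $a,b$ boundary integrals already present in $\hat J_G$ to produce $\int_\Gamma(\Tr(\rho,\bs p)-d)\cdot\partial_{u_1}G$. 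Choosing $G$ concentrated in a thin layer with $\partial_{u_1}G$ approximating an arbitrary function on $\Gamma$ then forces $\Tr(\rho,\bs p)=d$ a.e.\ in $t$. Your description of the mechanism is right; only the sentence ``functions $G$ that do not vanish on $[0,T]\times\Gamma$'' is wrong as written.

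Second, for continuity the paper does more than rule out jumps: it proves the explicit quantitative bound
\[
|\langle\pi_r,g\rangle-\langle\pi_s,g\rangle|\;\le\;C_0(r-s)^{1/2}\big\{C_1+I_T(\pi|\gamma)+\Vert g\Vert_{1,2,0}^2+(r-s)^{1/2}\Vert\Delta g\Vert_1\big\}
\]
by testing against $G^\delta(t,u)=\psi^\delta(t)g(u)$ with $\psi^\delta$ a trapezoidal bump of height $(r-s)^{-1/2}$ supported on $[s,r+\delta]$, and letting $\delta\downarrow0$. This H\"older-$1/2$ modulus, uniform over level sets of $I_T$, is not just a device to get $\pi\in C([0,T],\mc M^0)$ here: it is reused verbatim in the proof of Theorem~\ref{th4} to establish equicontinuity (relative compactness) of the level sets $E_q$. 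Your qualitative ``no jumps'' argument would close the present lemma but would leave a gap later; it is worth writing down the explicit estimate.
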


\begin{proof}
Fix a path $\pi$ in $D([0,T],\mc M_{+}\times\mc M^d)$ with finite rate function,
$I_T(\pi|\gamma)<\infty$. By definition of $I_T$, $\pi$ belongs to
$D([0,T],\mc M^0)$. Denote its density and momentum by $(\rho,\bs p)$: $\pi(t,du) =
(\rho,\bs p)(t,u) du$. 

The proof that $(\rho,\bs p)(0,\cdot) = \gamma(\cdot)$ is similar to the one
of Lemma 3.5 in \cite{BDGJL}, and the proof that $\Tr(\rho_t) = d_0$, $\Tr(p_{k,t})=d_k$, $k=1,\ldots,d$, is similar to the one found in
Lemma 4.1 in \cite{flm}. The fact that $\pi$ has finite energy follows from Lemma \ref{lemmaenergiald}.

We deal now with the continuity of $\pi$. We claim that there exists a
positive constant $C_0$ such that, for any $g\in\mc
C^{\infty}_c({D^d})$, and any $0 \leq s < r < T$,
\begin{eqnarray}\label{cont}
|\langle\pi_r,g\rangle-\langle\pi_s,g\rangle| \;\leq\;
C_0(r-s)^{1/2}\left\{C_1+I_T(\pi|\gamma)+\Vert
  g\Vert^2_{1,2,0}+(r-s)^{1/2}\Vert\Delta g\Vert_1\right\}\, . 
\end{eqnarray}
Indeed, for each $\delta>0$, let $\psi^{\delta}:[0,T]\to\bb R$ be the
function given by
\begin{equation*}
(r-s)^{1/2}\psi^{\delta}(t) = 
\begin{cases}
0 & \hbox{ if } 0\leq t\leq s \;\hbox{ or }\; r+\delta\leq t\leq T\, , \\
\frac{t-s}{\delta} & \hbox{ if } s\leq t\leq s+\delta\, , \\
1 & \hbox{ if } s+\delta\leq t\leq r\, , \\
1-\frac{t-r}{\delta} & \hbox{ if } r\leq t\leq r+\delta\, ,
\end{cases}
\end{equation*}
and let $G^{\delta}(t,u) = \psi^{\delta}(t)g(u)$. Of course,
$G^{\delta}$ can be approximated by functions in $\mc
C^{1,2}_0(\overline{\Omega_T})$ and then
\begin{eqnarray*}
(r-s)^{1/2}\lim_{\delta\to 0}J_{G^{\delta}}(\pi) & = &
\langle\pi_r,g\rangle-\langle\pi_s,g\rangle -
\int_s^rdt\;\langle\pi_t,\Delta g\rangle \\
&+& \int_r^sdt \int_{{D^d}}du \sum_{v\in\mc V} \tilde{v}\cdot \chi(\theta_v(\Lambda(\rho,\bs p))) \sum_{i=1}^d v_i\partial_{u_i}g(u)\\
&-& \frac{1}{(r-s)^{1/2}}\int_s^r dt\int_{{D^d}}du \sum_{v\in\mc V}\left(\sum_{k=0}^d\partial_{x_i}v_k g^k(u)\right)^2\chi(\theta_v(\Lambda(\rho,\bs p)))
\end{eqnarray*}
To conclude the proof, we observe that the left-hand side
is bounded by $(r-s)^{1/2} I_T(\pi|\gamma)$, that
$\chi$ is positive and bounded above on $[0,1]$ by $1/4$, and finally, we use the elementary inequality $2ab\leq a^2+b^2$.
\end{proof}

Denote by $L^2([0,T],H_0^1({D^d}))^*$ the dual of
$L^2([0,T],H_0^1({D^d}))$.  By Proposition 23.7 in \cite{z},
$L^2([0,T],H_0^1({D^d}))^*$ corresponds to
$L^2([0,T],H^{-1}({D^d}))$ and for $v$ in
$L^2([0,T],H_0^1({D^d}))^*$, $G$ in $L^2([0,T],H_0^1({D^d}))$,
\begin{equation}
\label{fl1}
\<\!\< v,G \>\!\>_{-1,1} \; =\; \int_0^T \<v_t, G_t\>_{-1,1}\, dt \; , 
\end{equation}
where the left hand side stands for the value of the linear functional
$v$ at $G$. Moreover, if we denote by $|\!| \!| v |\!| \!|_{-1}$ the
norm of $v$,
\begin{equation*}
|\!| \!| v |\!| \!|^2_{-1} \;=\; \int_0^T \Vert v_t \Vert^2_{-1} \, dt\;.
\end{equation*}

Fix a path $\pi(t,du)=(\rho,\bs p)(t,u)du$ in $D_\gamma$ and suppose that for $k=0,\ldots,d$
\begin{equation}
\label{cl1}
\sup_{H\in\mc C^{\infty}_c(\Omega_T)}\Big \{2 \int_0^T
  dt\, \langle p_{k,t},\partial_tH_t\rangle_2 - \int_0^T dt\int_{{D^d}}
  du\; \Vert\nabla H_t\Vert^2\Big\}\;<\; \infty\;.
\end{equation} 
In this case, for each $k$, $\partial_t p_k : C^{\infty}_c(\Omega_T) \to \bb R$
defined by
\begin{equation*}
\partial_t p_k (H) \;=\; - \int_0^T \< p_{k,t}, \partial_t H_t\>_2\, dt
\end{equation*}
can be extended to a bounded linear operator $\partial_t p_k :
L^2([0,T],H_0^1({D^d})) \to \bb R$. It belongs therefore to
$L^2([0,T],H_0^1({D^d}))^* = L^2([0,T],H^{-1}({D^d}))$. In
particular, there exists $v^k = \{v_t^k :0\le t\le T\}$ in
$L^2([0,T],H^{-1}({D^d}))$, which we denote by $v_t^k = \partial_t
p_{k,t}$, such that for any $H$ in $L^2([0,T],H_0^1({D^d}))$,
\begin{equation*}
\<\!\< \partial_t p_k , H\>\!\>_{-1,1} \;=\;
\int_0^T \langle \partial_t p_{k,t} , H_t\rangle_{-1,1}\,dt \;.
\end{equation*}
Moreover,
\begin{equation*}
\begin{split}
|\!| \!| \partial_t p_k |\!| \!|^2_{-1} \; &=\;
\int_0^T \, \Vert \partial_t p_{k,t} \Vert^2_{-1} \, dt\\
&=\; \sup_{H\in\mc C^{\infty}_c(\Omega_T)}\Big \{2 \int_0^T
  dt\, \langle p_{k,t},\partial_tH_t\rangle_2 - \int_0^T dt\int_{{D^d}}
  du\; \Vert\nabla H_t\Vert^2\Big\} \;.
\end{split}
\end{equation*}

Denote by $\<\!\< \partial_t(\rho,\bs p) , G\>\!\>_{-1,1}$ the linear functional given by
$$\<\!\< \partial_t(\rho,\bs p) , G\>\!\>_{-1,1} = \sum_{k=0}^d \<\!\< \partial_t p_k , H\>\!\>_{-1,1},$$
with
$$|\!| \!| \partial_t (\rho,\bs p) |\!| \!|^2_{-1} = \sum_{k=0}^d |\!| \!| \partial_t p_k |\!| \!|^2_{-1}.$$

Let $W$ be the set of paths $\pi(t,du)=(\rho,\bs p)(t,u)du$ in $D_\gamma$ such
that \eqref{cl1} holds, i.e., such that $\partial_t p_k$ belongs to
$L^2\left([0,T], H^{-1}({D^d})\right)$.  For $G$ in
$L^2\left([0,T],[H^1_0({D^d})]^{d+1}\right)$, let $\bb J_{G}:W\to \bb R$ be
the functional given by
\begin{eqnarray*}
\bb J_{G}(\pi) & = & \<\!\< \partial_t(\rho,\bs p) , G\>\!\>_{-1,1} + \frac{1}{2}\int_0^T dt \int_{{D^d}} du \nabla({\rho},{{\boldsymbol p}})(t,u) \cdot \nabla G(t,u)\\
&+&\int_0^T dt \int_{{D^d}} du\,\,  \sum_{v\in\mathcal{ V}} \tilde{v}\cdot \chi(\theta_v(\Lambda({\rho},{{\boldsymbol p}}))) \sum_{1\leq i \leq d} v_i \partial_{u_i} G(t,u)\\
&-& \int_0^T dt\int_{{D^d}}du \sum_{v\in\mc V} \left(\sum_{k=0}^d v_k \partial_{x_i}G_t^k(u) \right)^2\chi(\theta_v(\Lambda(\rho,p))),
\end{eqnarray*}
Note that $\bb J_{G}(\pi) = J_G(\pi)$ for every $G$ in $C^{\infty}_c(\Omega_T)\times[\mc
C^{\infty}_c(D^d)]^{d}$. Moreover, since $\bb J_{\cdot} (\pi)$ is
continuous in $L^2\left([0,T],[H^1_0 ({D^d})]^{d+1}\right)$ and since $\mc
C^{\infty}_c(\Omega_T)$ is dense in $\mc C^{1,2}_0
(\overline{\Omega_T})$ and in $L^2 ([0,T]$, $H^1_0({D^d}))$, for
every $\pi$ in $W$,
\begin{eqnarray}
\label{rf1}
I_T(\pi|\gamma) = \sup_{G\in C^{\infty}_c\Omega_T\times[\mc C^{\infty}_c(D^d)]^{d}} 
\bb J_{G}(\pi) \;=\;
\sup_{G\in L^2\left([0,T],[H^1_0]^{d+1}\right)} \bb J_{G}(\pi)\, . 
\end{eqnarray}

\begin{lemma}
\label{lem03}
There exists a constant $C_0>0$ such that if the density and momentum $(\rho, \bs p)$ of
some path $\pi (t,du) = (\rho,\bs p)(t,u) du$ in $D([0,T],\mc M^0)$ has
generalized gradients, $\nabla\rho$ and $\nabla p_k$, $k=1,\ldots,d$. Then
\begin{eqnarray}
\label{est1}
|\!| \!| \partial_t (\rho,\bs p) |\!| \!|^2_{-1} &\leq&
C_0\left\{I_T(\pi|\gamma)+\mc Q(\pi)\right\}\, , \\
\label{est2}
\sum_{k=0}^d\int_0^T dt \int_{{D^d}}
du\;{\Vert\nabla p_k(t,u)\Vert^2} &\leq&
C_0\left\{I_T(\pi|\gamma)  +1\right\}\, .
\end{eqnarray}
\end{lemma}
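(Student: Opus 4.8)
The plan is to extract both estimates directly from the variational formula \eqref{rf1} for $I_T(\pi|\gamma)$ by inserting suitable test functions $G$ into $\bb J_G(\pi)$ and rearranging. Throughout, write $(\rho,\bs p)=(p_0,p_1,\dots,p_d)$ and keep in mind that, since $\mc Q(\pi)<\infty$ is built into the finiteness of $I_T$, the components $p_k$ lie in $L^2([0,T],H^1(D^d))$ and $\hat{\mc Q}(\pi)=\sum_k\int_0^T\!dt\int_{D^d}\!\Vert\nabla p_{k,t}\Vert^2\,du=\mc Q(\pi)$. Recall also that $\chi=r(1-r)$ is bounded by $1/4$ on $[0,1]$ and that $\theta_v(\Lambda(\rho,\bs p))\in[0,1]$, so every term in $\bb J_G$ involving $\chi$ is controlled by a constant times $\sum_i\int_0^T\!dt\int_{D^d}\Vert\nabla G_t\Vert\,du$ or its square, with constants depending only on $|\mc V|$ and $\breve v$.

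\emph{Step 1 (the $H_{-1}$ bound \eqref{est1}).} Given any $H\in L^2([0,T],[H^1_0(D^d)]^{d+1})$, plug $G=H$ into $\bb J_G(\pi)\le I_T(\pi|\gamma)$. Moving the three ``lower-order'' integrals to the other side and using that $-\big(\sum_k v_k\partial_{u_i}H^k\big)^2\chi\le 0$, one gets
\begin{equation*}
\<\!\<\partial_t(\rho,\bs p),H\>\!\>_{-1,1}\;\le\; I_T(\pi|\gamma)\;+\;\frac12\sum_{k}\int_0^T\!\!dt\int_{D^d}\!\!\Vert\nabla p_{k,t}\Vert\,\Vert\nabla H_t^k\Vert\,du\;+\;C\sum_{i,k}\int_0^T\!\!dt\int_{D^d}\!|\partial_{u_i}H_t^k|\,du .
\end{equation*}
By Cauchy--Schwarz in space and time this is bounded by $I_T(\pi|\gamma)+C'\big(\mc Q(\pi)^{1/2}+1\big)\,|\!|\!|H|\!|\!|_{1}$, where $|\!|\!|H|\!|\!|_1^2=\sum_k\int_0^T\Vert\nabla H_t^k\Vert_2^2\,dt$ is the norm on $L^2([0,T],[H^1_0]^{d+1})$ dual to $|\!|\!|\cdot|\!|\!|_{-1}$. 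Replacing $H$ by $\lambda H$ and optimizing over $\lambda>0$, i.e. taking the supremum over $H$ with $|\!|\!|H|\!|\!|_1\le 1$, yields $|\!|\!|\partial_t(\rho,\bs p)|\!|\!|_{-1}\le I_T(\pi|\gamma)+C'(\mc Q(\pi)^{1/2}+1)$; squaring and using $ab\le a^2+b^2$ and $\mc Q(\pi)\le \mc Q(\pi)+1$ gives \eqref{est1} after adjusting $C_0$. (If the left side of \eqref{cl1} is infinite for some $k$, then arguing as above along a maximizing sequence forces $I_T(\pi|\gamma)=\infty$, so the inequality holds trivially; hence we may assume $\pi\in W$.)

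\emph{Step 2 (the energy bound \eqref{est2}).} Here the idea is to test against $G$ proportional to the gradient-like field that reconstructs $\nabla p_k$. Concretely, for a fixed component $k$ and a fixed direction $i$, take $G$ with only its $k$-th coordinate nonzero, chosen (after smoothing and cutting off near $\Gamma$, which is legitimate since we may work with $G\in\mc C^\infty_c(\Omega_T)$ and pass to the limit by density) to approximate $c\,\partial_{u_i}p_{k}$ for a small constant $c>0$. Then in $\bb J_G(\pi)$: the term $\frac12\int\nabla p_k\cdot\nabla G$ contributes $\approx \frac{c}{2}\int|\partial_{u_i}p_k|^2$ plus cross terms; the term $\<\!\<\partial_t p_k,G\>\!\>_{-1,1}$ is bounded by $|\!|\!|\partial_t p_k|\!|\!|_{-1}\,|\!|\!|G|\!|\!|_1\le c\,|\!|\!|\partial_t p_k|\!|\!|_{-1}\,\mc Q(\pi)^{1/2}$; the $\chi$-linear term is $\le C c\,\mc Q(\pi)^{1/2}$ by Cauchy--Schwarz; and the last, quadratic-in-$\nabla G$ term is $\le C c^2\,\mc Q(\pi)$. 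Summing over $i$ and $k$, using $\bb J_G(\pi)\le I_T(\pi|\gamma)$ and then \eqref{est1} from Step 1 to dispose of $|\!|\!|\partial_t p_k|\!|\!|_{-1}$, one obtains
\begin{equation*}
\tfrac{c}{2}\,\mc Q(\pi)\;\le\; I_T(\pi|\gamma)\;+\;C_1 c\,\mc Q(\pi)^{1/2}\big(I_T(\pi|\gamma)+\mc Q(\pi)\big)^{1/2}\;+\;C_2 c\,\mc Q(\pi)^{1/2}\;+\;C_3 c^2\,\mc Q(\pi).
\end{equation*}
Choosing $c$ small enough to absorb the $c\,\mc Q(\pi)^{1/2}\cdot\mc Q(\pi)^{1/2}$ and $c^2\mc Q(\pi)$ terms into the left side (using $2ab\le \epsilon a^2+\epsilon^{-1}b^2$ with a small $\epsilon$) leaves $\mc Q(\pi)\le C_0(I_T(\pi|\gamma)+1)$, which is \eqref{est2} since the left side of \eqref{est2} equals $\hat{\mc Q}(\pi)=\mc Q(\pi)$.

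\emph{Main obstacle.} Step 1 is routine duality. The real work is in Step 2: the test function $G\approx c\,\partial_{u_i}p_k$ is only in $L^2([0,T],H^1_0)$ if $\partial_{u_i}p_k$ itself has a trace-zero $H^1$ regularity in space, which we do not know a priori — we only know $p_k\in L^2([0,T],H^1)$. The standard remedy, following \cite{flm}, is to regularize: replace $\partial_{u_i}p_k$ by $\partial_{u_i}$ of a spatial mollification of $p_k$, multiplied by a smooth cutoff supported away from $\Gamma$, so that $G$ is genuinely admissible; carry out the estimate with the mollification parameter $\varepsilon$ and cutoff fixed, obtaining a bound uniform in $\varepsilon$ and in the cutoff; and only then let $\varepsilon\to0$ and the cutoff tend to $\mathbf 1_{D^d}$, using lower semicontinuity of $\mc Q$ (equivalently Fatou) to recover the full $\hat{\mc Q}(\pi)$ on the left. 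Controlling the commutator between mollification and the nonlinear term $\chi(\theta_v(\Lambda(\rho,\bs p)))$ — and checking that the boundary contributions created by the cutoff vanish in the limit because $\Tr(p_{k,t})=d_k$ is prescribed — is the delicate point, but it is exactly the computation carried out in \cite{flm} and adapts here with only notational changes coming from the extra velocity sum.
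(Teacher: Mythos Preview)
Your Step~1 is essentially the paper's argument, with one slip: by throwing away the negative quadratic term $-\int \sum_v\big(\sum_k v_k\partial_{u_i}H^k\big)^2\chi$ before optimizing, the inequality you obtain is linear in $\lambda$, so ``optimizing over $\lambda$'' does nothing and you end up with $|\!|\!|\partial_t(\rho,\bs p)|\!|\!|_{-1}\le I_T+C'(\mc Q^{1/2}+1)$; squaring then produces $I_T^2$, not $I_T$. Keep the quadratic term: inserting $G=aH$ gives a bound of the form $I_T/a + B\|H\|_1 + aC_2\|H\|_1^2$ for the pairing, and optimizing in $a$ turns the $I_T$ contribution into $\sqrt{I_T}\,\|H\|_1$, which squares to the correct linear dependence.

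Step~2, however, does not work as written. With $G^k\approx c\,\partial_{u_i}p_k$ the gradient term in $\bb J_G(\pi)$ is $\tfrac{c}{2}\sum_j\int \partial_{u_j}p_k\,\partial_{u_j}\partial_{u_i}p_k=\tfrac{c}{4}\sum_j\int\partial_{u_i}\big(\partial_{u_j}p_k\big)^2$, a pure boundary term that vanishes for compactly supported profiles; it does \emph{not} yield $\tfrac{c}{2}\int|\partial_{u_i}p_k|^2$. The test function that produces $\mc Q(\pi)$ on the left is $G=K(\pi-\kappa)$, where $\kappa$ is the fixed smooth extension of the boundary data $d(\cdot)$: then $\tfrac12\int\nabla(\rho,\bs p)\cdot\nabla G=\tfrac{K}{2}\sum_k\|\nabla p_k\|_2^2-\tfrac{K}{2}\int\nabla(\rho,\bs p)\cdot\nabla\kappa$, and the cross term is harmless. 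This choice also dissolves your ``main obstacle'': since $\Tr(p_{k,t})=d_k=\Tr(\kappa_k)$, each $p_k-\kappa_k$ already lies in $L^2([0,T],H^1_0(D^d))$, so no mollification or cutoff is needed.

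There is a second, more serious gap in Step~2: your plan is to bound the time-derivative term by $|\!|\!|\partial_t p_k|\!|\!|_{-1}\,|\!|\!|G|\!|\!|_1$ and then invoke \eqref{est1}. But \eqref{est1} itself carries $\mc Q(\pi)$ on the right, so after duality you get a contribution of order $c\sqrt{C_0}\,\mc Q(\pi)$ on the right-hand side with a coefficient $c\sqrt{C_0}$ that does \emph{not} become small relative to the $c/2$ on the left by shrinking $c$. The paper avoids this circularity by exploiting the exact identity (Lions' lemma): for $G=K(\pi-\kappa)$ one has
\[
\<\!\<\partial_t(\rho,\bs p),\,K(\pi-\kappa)\>\!\>_{-1,1}
=\tfrac{K}{2}\big(\|\pi_T-\kappa\|_2^2-\|\pi_0-\kappa\|_2^2\big),
\]
which is bounded by a constant depending only on $|\mc V|,\breve v,\kappa$, since $\pi\in\mc M^0$. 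With the time term controlled uniformly, the remaining inequality reads $I_T\ge (K/4-2CK^2)\mc Q(\pi)-C(K)$, and choosing $K$ small yields \eqref{est2}.
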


\begin{proof}
Fix a path $\pi(t,du) = (\rho,\bs p)(t,u) du$ in $D([0,T],\mc M^0)$.  In view
of the discussion presented before the lemma, we need to show that the
left hand side of \eqref{cl1} is bounded by the right hand side of
\eqref{est1}.  Such an estimate follows from the definition of the
rate function $I_T(\cdot|\gamma)$ and from the elementary inequality
$2ab\leq Aa^2+A^{-1}b^2$.

To prove \eqref{est2}, observe that
\begin{eqnarray*}
I(\pi) &\geq& J_G(\pi) = \partial_t \pi(G) + \frac{1}{2} \int_0^Tdt \int_{D^d}du \sum_{i=1}^d\<\partial_{x_i}(\rho,p),\partial_{x_i}G\>_2\\
&+& \int_0^Tdt \int_{D^d}du\sum_{v\in\mc V} \left(\chi(\theta_v(\Lambda(\rho,p))) \right) \sum_{i=1}^d \tilde{v}(v_i\partial_{x_i}G)\\
&-& \int_0^Tdt \int_{D^d}du\sum_{v\in\mc V} \sum_{i=1}^d\left(\sum_{k=0}^d v_k\partial_{\rho_i}G^k \right)^2\chi(\theta_v(\Lambda(\rho,p)))\\
&\geq & \partial_t\pi(G) +\frac 12\int_0^Tdt \int_{D^d}du \sum_{i=1}^d\<\partial_{x_i}(\rho,p),\partial_{x_i}G\>_2 - C\int_0^Tdt\sum_{k=0}^d\|\nabla G^k\|_2^2,
\end{eqnarray*}
where $C$ is constant obtained from the elementary inequality $2ab\leq a^2+b^2$, the fact that $\mc V$ is finite, and that $\chi$ is bounded above by $1/4$ in $[0,1]$.

Now, consider $G = K(\pi - \kappa)$, and note that $\pi-\kappa$ belong to $L^2([0,T],H_0^1({D^d}))$, which implies that it may be approximated by $C_c^\infty$ functions. Therefore $\partial_t\pi(G) = \<\pi_T,\pi_T-\kappa\>-\<\pi_0,\pi_0-\kappa\>$, which is bounded by some constant $C_1$. We, then, obtain that
\begin{eqnarray*}
I(\pi)&\ge& \int_0^Tdt\Big\{ -C_1 +\frac K2\sum_{k=0}^d\|\nabla p_k\|_2^2 - \frac K2 \sum_{i=1}^d\<\partial_{x_i}(\rho,p),\partial_{x_i}\kappa\>_2 - CK^2\sum_{k=0}^d\|\nabla(p_k-\kappa_k)\|_2^2\Big\}\\
&\ge& \int_0^Tdt\Big\{\Big (K/4-2CK^2\Big)\sum_{k=0}^d\|\nabla p_k\|_2^2\Big\}- \frac K4\sum_{k=0}^d\|\nabla\kappa_k\|_2^2 - 2CK^2\sum_{k=0}^d\|\nabla\kappa_k\|_2^2 -C_1
\end{eqnarray*}
where in the last inequality we used the Cauchy-Schwartz inequality and the elementary inequalities $2ab\leq a^2+b^2$. The proof thus follows from choosing a suitable $K$, the estimate given in \eqref{est1}, and the fact we have a fixed smooth function $\kappa$.
\end{proof}

\begin{corollary}
\label{rfhe}
The density $(\rho,\bs p)$ of a path $\pi(t,du)=(\rho,\bs p)(t,u)du$ in $D([0,T],\mc
M^0)$ is the weak solution of the equation \eqref{problema2} and initial
profile $\gamma$ if and only if the rate function $I_T(\pi|\gamma)$
vanishes. Moreover, if any of the above conditions hold, $\pi$ has finite energy ($\mc Q(\pi)<\infty$).
\end{corollary}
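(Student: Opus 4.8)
The plan is to establish the equivalence $I_T(\pi|\gamma)=0 \iff (\rho,\bs p)$ is the weak solution of \eqref{problema2} with initial profile $\gamma$, and then to deduce the finite-energy claim. The starting observation is that the functional $\hat J_G$, viewed as a function of $G$, has the structure ``linear part minus a nonnegative quadratic form'': abbreviating by $\ell_\pi(G)$ the sum of all the terms in $\hat J_G(\pi)$ that are \emph{linear} in $G$ (the boundary terms, the time-derivative term, the Laplacian term, and the first-order drift term coming from $\sum_v \tilde v\cdot\chi(\theta_v(\Lambda))v_i\partial_{u_i}G$), and by $\mathfrak q_\pi(G) = \int_0^T\!dt\int_{D^d}\!du\,\sum_{v}\big(\sum_k v_k\partial_{u_i}G^k\big)^2\chi(\theta_v(\Lambda(\rho,\bs p)))\ge 0$ the quadratic term, we have $\hat J_G(\pi)=\ell_\pi(G)-\mathfrak q_\pi(G)$. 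By Lemma \ref{lem01}, any $\pi$ with $I_T(\pi|\gamma)<\infty$ lies in $D_\gamma$, so in what follows we may freely assume $\pi$ has finite energy and the correct trace and initial data; in particular $\Lambda(\rho,\bs p)$ is well defined a.e.\ and the drift and quadratic terms make sense.

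First I would prove the easy direction: if $(\rho,\bs p)$ is the weak solution of \eqref{problema2}, then comparing the definition of $\hat J_G$ with the weak-formulation identity shows that $\ell_\pi(G)=\mathfrak q_\pi(\cdot)$-free part vanishes identically — more precisely, the weak formulation says exactly $\ell_\pi(G)=0$ for all admissible $G$ (the weak-solution identity in Section \ref{sec3} is precisely the statement that the linear part of $\hat J_G$ is zero). Hence $\hat J_G(\pi)=-\mathfrak q_\pi(G)\le 0$ for all $G$, so $I_T(\pi|\gamma)=\sup_G \hat J_G(\pi)\le 0$; taking $G\equiv 0$ gives $\hat J_0(\pi)=0$, so $I_T(\pi|\gamma)=0$. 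For the converse, suppose $I_T(\pi|\gamma)=0$. Then for every $G$ and every $\varepsilon\in\mathbb R$, applying the bound to $\varepsilon G$ gives $\varepsilon\,\ell_\pi(G)-\varepsilon^2\mathfrak q_\pi(G)\le 0$; dividing by $\varepsilon>0$ and letting $\varepsilon\downarrow 0$ yields $\ell_\pi(G)\le 0$, and dividing by $\varepsilon<0$ and letting $\varepsilon\uparrow 0$ yields $\ell_\pi(G)\ge 0$. Hence $\ell_\pi(G)=0$ for all $G\in\mathcal C_0^{1,2}(\overline{\Omega_T})\times[\mathcal C_0^2(\overline{D^d})]^d$, which is precisely the weak formulation of \eqref{problema2}; combined with the initial and boundary conditions supplied by Lemma \ref{lem01}, this says $(\rho,\bs p)$ is \emph{a} weak solution of \eqref{problema2} with initial profile $\gamma$. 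Since $\pi$ has finite energy (again Lemma \ref{lem01}), Theorem \ref{limhyd2} guarantees that the finite-energy weak solution is unique, so $(\rho,\bs p)$ is \emph{the} weak solution.

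The finite-energy assertion in the ``moreover'' is then immediate in both cases: if $I_T(\pi|\gamma)=0$, then in particular $I_T(\pi|\gamma)<\infty$, so Lemma \ref{lem01} (via Lemma \ref{lemmaenergiald}) gives $\mathcal Q(\pi)<\infty$; if instead we start from $(\rho,\bs p)$ being the weak solution, Theorem \ref{limhyd2} already asserts the weak solution has finite energy. I expect the main obstacle to be a bookkeeping one rather than a conceptual one: one must check carefully that the density argument behind \eqref{rf1} lets us pass from test functions $G$ in $\mathcal C_c^\infty$ to the class $\mathcal C_0^{1,2}(\overline{\Omega_T})\times[\mathcal C_0^2(\overline{D^d})]^d$ appearing in the weak formulation, so that ``$\ell_\pi(G)=0$ for all admissible $G$'' really is equivalent to the weak-solution identity, and that the nonlinear terms $\chi(\theta_v(\Lambda(\rho,\bs p)))$ are integrable — which they are, being bounded by $1/4$ — so that all the manipulations of $\hat J_{\varepsilon G}$ are legitimate. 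Beyond that, the proof is a direct application of the ``linear minus nonnegative quadratic'' principle together with the results already established.
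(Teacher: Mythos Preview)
Your proposal is correct and follows essentially the same approach as the paper's proof: both exploit the decomposition of $\hat J_G(\pi)$ into a linear part in $G$ minus a nonnegative quadratic form, observe that the linear part vanishes exactly when $(\rho,\bs p)$ is a weak solution, and invoke Theorem \ref{limhyd2} and the finiteness of the rate function for the energy claim. Your $\varepsilon G$ scaling argument makes explicit what the paper merely asserts (``the linear part in $G$ of $J_G(\pi)$ has to vanish for all functions $G$''), and your citation of Lemma \ref{lem01} for finite energy is equivalent to the paper's appeal to Lemma \ref{lem03}, since in fact $\mc Q(\pi)<\infty$ is already forced by the very definition of $I_T$ whenever $I_T(\pi|\gamma)<\infty$.
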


\begin{proof}
  On the one hand, if the density $(\rho,\bs p)$ of a path $\pi(t,du)=
  (\rho,\bs p)(t,u)du$ in $D([0,T], \mc M^0)$ is the weak solution of equation
  \eqref{problema2} with initial condition is $\gamma$, in the
  formula of $\hat J_G(\pi)$, the linear part in $G$ vanishes which
  proves that the rate functional $I_T(\pi|\gamma)$ vanishes. On the
  other hand, if the rate functional vanishes, the path $(\rho,\bs p)$ belongs
  to $L^2([0,T],[H^1({D^d})]^{d+1})$ and the linear part in $G$ of $J_G(\pi)$
  has to vanish for all functions $G$. In particular, $(\rho,\bs p)$ is a weak
  solution of \eqref{problema2}. Moreover, if the rate function is finite, by the previous lemma, 
  $\pi$ has finite energy. Accordingly, if $\pi$ is a weak solution, we have from Theorem \ref{limhyd2} that
  it has finite energy.
\end{proof}

For each $q>0$, let $E_q$ be the level set of $I_T(\pi|\gamma)$
defined by
\begin{equation*}
E_q=\left\{\pi\in D([0,T],\mc M): I_T(\pi|\gamma)\leq q\right\}\, .
\end{equation*}
By Lemma \ref{lem01}, $E_q$ is a subset of $C([0,T],\mc M^0)$. Thus,
from the previous lemma, it is easy to deduce the next result.

\begin{corollary}
\label{corls}
For every $q\geq 0$, there exists a finite constant $C(q)$ such that
\begin{eqnarray*}
\sup_{\pi\in E_q} \Big\{ |\!| \!| \partial_t (\rho,\bs p) |\!| \!|^2_{-1}
\;+\; \sum_{k=0}^d\int_0^T dt \int_{{D^d}} du\;
{\Vert\nabla p_k(t,u)\Vert^2}
\Big \} \;\leq\; C(q)\;. 
\end{eqnarray*}
\end{corollary}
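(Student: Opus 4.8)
The plan is to deduce Corollary \ref{corls} directly from Lemma \ref{lem03} by exhibiting a uniform bound valid over the level set $E_q$. First I would fix $\pi$ in $E_q$, so that $I_T(\pi|\gamma)\le q$. By Lemma \ref{lem01}, $\pi$ belongs to $D_\gamma\subset C([0,T],\mc M^0)$, so $\pi(t,du) = (\rho,\bs p)(t,u)\,du$ with $(\rho,\bs p)$ well defined. The remaining point is that Lemma \ref{lem03} requires the density and momentum to possess generalized gradients in order to conclude \eqref{est1}--\eqref{est2}; so the first real step is to check that this hypothesis is automatically satisfied whenever $I_T(\pi|\gamma)<\infty$. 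This is precisely the content of the discussion following Lemma \ref{lem01}: since $I_T(\pi|\gamma)<\infty$ forces $\mc Q(\pi)<\infty$ (via Lemma \ref{lem01} and the identification $\mc Q(\pi)=\hat{\mc Q}(\pi)$), the density $\rho$ and each momentum $p_k$ belong to $L^2([0,T],H^1({D^d}))$, hence have generalized gradients.

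Once the hypotheses of Lemma \ref{lem03} are verified for every $\pi\in E_q$, I would simply add the two estimates \eqref{est1} and \eqref{est2}, obtaining
\begin{equation*}
|\!| \!| \partial_t (\rho,\bs p) |\!| \!|^2_{-1}
\;+\; \sum_{k=0}^d\int_0^T dt \int_{{D^d}} du\;{\Vert\nabla p_k(t,u)\Vert^2}
\;\le\; C_0\left\{2\,I_T(\pi|\gamma)+\mc Q(\pi)+1\right\}\, .
\end{equation*}
It then remains to bound the right-hand side by a constant depending only on $q$. The term $I_T(\pi|\gamma)$ is bounded by $q$ by definition of $E_q$. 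The only genuinely nontrivial term is $\mc Q(\pi)$, which must be controlled in terms of $I_T(\pi|\gamma)$; but this is exactly estimate \eqref{est2} of Lemma \ref{lem03} together with $\mc Q(\pi)=\hat{\mc Q}(\pi) = \sum_{k=0}^d\int_0^T dt\int_{D^d}du\,\Vert\nabla p_{k,t}(u)\Vert^2\le C_0\{I_T(\pi|\gamma)+1\}\le C_0(q+1)$. Substituting, one gets a bound of the form $C_0\{2q + C_0(q+1)+1\}=:C(q)$, finite and independent of the particular $\pi\in E_q$, which is the claimed inequality.

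The main obstacle, such as it is, is purely bookkeeping: making sure that the hypothesis of Lemma \ref{lem03} (existence of generalized gradients) is in force for \emph{every} element of $E_q$, not just for a fixed path, so that the constant $C_0$ of that lemma is genuinely uniform; this is where one invokes Lemma \ref{lem01} and the finite-energy characterization recalled before it. No compactness or approximation argument is needed here — the estimate is pointwise in $\pi$ and then taken to the supremum over $E_q$ — so this corollary is essentially a repackaging of Lemma \ref{lem03}, and it is the input (rather than the conclusion) that will later feed the compactness argument in Theorem \ref{th4}.
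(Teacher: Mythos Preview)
Your proposal is correct and follows essentially the same approach as the paper, which simply remarks that the corollary is an easy consequence of Lemma~\ref{lem01} (to ensure every $\pi\in E_q$ lies in $D_\gamma$ and hence has finite energy) together with Lemma~\ref{lem03}. Your explicit bookkeeping---adding \eqref{est1} and \eqref{est2} and then using \eqref{est2} again to control $\mc Q(\pi)$ in terms of $q$---is exactly what the paper leaves to the reader.
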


Next result together with the previous estimates provide the
compactness needed in the proof of the lower semicontinuity of the
rate function.

\begin{lemma}
\label{lem02}
Let $\{\rho^n:n\geq 1\}$ be a sequence of functions in $L^2(\Omega_T)$
such that uniformly on $n$, 
\begin{equation*}
\int_0^T dt\left\Vert\rho^n_t\right\Vert^2_{1,2} + \int_0^T dt
\left\Vert\partial_t\rho^n_t\right\Vert_{-1}^2 < C 
\end{equation*}
for some positive constant $C$. Suppose that $\rho\in L^2(\Omega_T)$
and that $\rho^n \rightarrow \rho$ weakly in $L^2(\Omega_T)$. Then
$\rho_n\rightarrow \rho$ strongly in $L^2(\Omega_T)$.
\end{lemma}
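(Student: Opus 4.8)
This is a version of the Aubin--Lions compactness lemma for the triple $H^1(D^d)\hookrightarrow L^2(D^d)\hookrightarrow H^{-1}(D^d)$, and I would prove it directly. The first observation is that it suffices to prove that $\{\rho^n\}$ is relatively compact in the strong topology of $L^2(\Omega_T)$: any subsequence would then have a further subsequence converging strongly in $L^2(\Omega_T)$, whose limit must coincide with the weak limit $\rho$; since this holds for every subsequence, the whole sequence converges strongly to $\rho$.

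To prove relative compactness I would use the Fr\'echet--Kolmogorov criterion. The family $\{\rho^n\}$ is bounded in $L^2(\Omega_T)$ and $\Omega_T=(0,T)\times D^d$ has finite Lebesgue measure, so it only remains to check that translations in time and in space act equicontinuously on $\{\rho^n\}$, uniformly in $n$. The spatial equicontinuity follows at once from the bound $\int_0^T\|\rho^n_t\|_{1,2}^2\,dt\le C$: in the periodic coordinates $\|\tau_y\rho^n_t-\rho^n_t\|_2\le|y|\,\|\rho^n_t\|_{1,2}$ directly, while in the bounded coordinate one first applies a bounded extension operator $H^1(D^d)\to H^1(\R\times\bb T^{d-1})$ and then translates; squaring and integrating in $t$ gives $\sup_n\int_0^T\|\tau_y\rho^n_t-\rho^n_t\|_2^2\,dt\le C'|y|^2\to 0$ as $y\to 0$.

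The time equicontinuity is the heart of the matter, and it is where the bound on $\partial_t\rho^n$ enters. Since $H^1(D^d)\hookrightarrow L^2(D^d)$ is compact and $L^2(D^d)\hookrightarrow H^{-1}(D^d)$ is continuous, Ehrling's inequality provides, for every $\varepsilon>0$, a constant $C_\varepsilon<\infty$ with $\|f\|_2\le\varepsilon\|f\|_{1,2}+C_\varepsilon\|f\|_{-1}$ for all $f\in H^1(D^d)$. Applying this to $f=\rho^n_{t+h}-\rho^n_t$, squaring, and integrating over $t\in(0,T-h)$,
\begin{equation*}
\int_0^{T-h}\|\rho^n_{t+h}-\rho^n_t\|_2^2\,dt\;\le\;2\varepsilon^2\int_0^T\big(\|\rho^n_{t+h}\|_{1,2}^2+\|\rho^n_t\|_{1,2}^2\big)\,dt\;+\;2C_\varepsilon^2\int_0^{T-h}\|\rho^n_{t+h}-\rho^n_t\|_{-1}^2\,dt .
\end{equation*}
The first term is at most $4\varepsilon^2 C$, uniformly in $n$ and $h$. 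For the second, observe that $\rho^n$, having time derivative $\partial_t\rho^n$ in $L^2([0,T],H^{-1}(D^d))$ by hypothesis, admits an absolutely continuous $H^{-1}(D^d)$-valued representative with $\rho^n_{t+h}-\rho^n_t=\int_t^{t+h}\partial_s\rho^n_s\,ds$; by Cauchy--Schwarz and Fubini the second term is then at most $h^2\int_0^T\|\partial_s\rho^n_s\|_{-1}^2\,ds\le Ch^2$. Hence, given $\varepsilon>0$, choosing $h$ small enough makes $\sup_n\int_0^{T-h}\|\rho^n_{t+h}-\rho^n_t\|_2^2\,dt$ as small as desired. Together with the spatial estimate this gives the relative compactness of $\{\rho^n\}$ in $L^2(\Omega_T)$, and the first paragraph concludes the proof.

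The main obstacle is exactly this time-translation estimate and the interplay between Ehrling's inequality and the $H^{-1}$-bound on $\partial_t\rho^n$; the spatial part is routine once one invokes the standard extension operator to translate across the boundary $\{0,1\}\times\bb T^{d-1}$ of $D^d$. An equivalent route is to mollify $\rho^n$ by a smoothing semigroup $P_\delta$ and estimate the three pieces of $\rho^n=(\rho^n-P_\delta\rho^n)+(P_\delta\rho^n-P_\delta\rho)+(P_\delta\rho-\rho)$, but this amounts to the same analysis.
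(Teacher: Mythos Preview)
Your argument is correct: you establish relative compactness of $\{\rho^n\}$ in $L^2(\Omega_T)$ via the Fr\'echet--Kolmogorov criterion, controlling spatial translations through the uniform $L^2([0,T],H^1)$ bound and temporal translations through Ehrling's inequality combined with the $L^2([0,T],H^{-1})$ bound on $\partial_t\rho^n$; weak convergence then identifies the limit. The paper, by contrast, does not reprove this compactness step at all: it simply observes that $H^1(D^d)\hookrightarrow L^2(D^d)\hookrightarrow H^{-1}(D^d)$ with the first inclusion compact, and invokes Corollary~8.4 of Simon~\cite{Si} (an Aubin--Lions type result) to conclude that $\{\rho^n\}$ is relatively compact in $L^2([0,T],L^2(D^d))$. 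Your approach is thus a hands-on proof of exactly the abstract lemma the paper cites as a black box; it is more self-contained and makes transparent where each hypothesis is used, whereas the paper's route is a two-line appeal to the literature. Both are perfectly valid here, and your Ehrling-based time-translation estimate is the standard engine behind Simon's result in any case.
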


\begin{proof}
Since $H^1({D^d})\subset L^2({D^d})\subset H^{-1}({D^d})$ with
compact embedding $H^1({D^d})\to L^2({D^d})$, from Corollary 8.4,
\cite{Si}, the sequence $\{\rho_n\}$ is relatively compact in
$L^2\big([0,T],L^2({D^d})\big)$. Therefore the weak convergence
implies the strong convergence in $L^2\big([0,T],L^2({D^d})\big)$.
\end{proof}

\begin{theorem}
\label{th4}
The functional $I_T(\cdot|\gamma)$ is lower semicontinuous and has
compact level sets. 
\end{theorem}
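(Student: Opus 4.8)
The plan is to prove lower semicontinuity first and then upgrade to compact level sets, following the classical scheme of \cite{flm}. For lower semicontinuity, take a sequence $\pi^n \to \pi$ in $D([0,T],\mc M_{+}\times\mc M^d)$ and show $I_T(\pi|\gamma) \leq \liminf_n I_T(\pi^n|\gamma)$. We may assume the $\liminf$ is finite and, passing to a subsequence, that $I_T(\pi^n|\gamma)$ converges to a finite limit $q$. By Lemma \ref{lem01}, each $\pi^n$ lies in $D_\gamma \subset C([0,T],\mc M^0)$, and $\mc M^0$ is closed, so $\pi \in D([0,T],\mc M^0)$ as well; write $\pi^n(t,du) = (\rho^n,\bs p^n)(t,u)\,du$ and $\pi(t,du)=(\rho,\bs p)(t,u)\,du$. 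For each fixed $G \in \mc C_0^{1,2}(\overline{\Omega_T})\times[\mc C_0^2(\overline{D^d})]^d$, the functional $\hat J_G$ is a sum of terms that are continuous along the convergence $\pi^n \to \pi$: the linear-in-$(\rho,\bs p)$ terms converge by the definition of the weak/Skorohod topology, and the two nonlinear terms — the one involving $\chi(\theta_v(\Lambda(\cdot)))$ linearly in the drift and the quadratic correction term — require that $(\rho^n,\bs p^n) \to (\rho,\bs p)$ \emph{strongly} in $L^2(\Omega_T)$, since $\chi\circ\theta_v\circ\Lambda$ is a bounded continuous (indeed smooth on the relevant compact range) function and strong $L^2$ convergence plus boundedness gives convergence of these integrals (after passing to a further subsequence converging a.e.). This is exactly where Corollary \ref{corls} and Lemma \ref{lem02} enter.

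So the key intermediate step is: from $I_T(\pi^n|\gamma)\le q+1$ eventually, Corollary \ref{corls} gives a uniform bound
\begin{equation*}
\sup_n \Big\{ |\!|\!| \partial_t(\rho^n,\bs p^n)|\!|\!|_{-1}^2 + \sum_{k=0}^d \int_0^T dt \int_{D^d} du\, \|\nabla p_k^n(t,u)\|^2 \Big\} \leq C(q+1).
\end{equation*}
In particular each component sequence is bounded in $L^2([0,T],H^1(D^d))$ and has time derivative bounded in $L^2([0,T],H^{-1}(D^d))$; extracting a weakly convergent subsequence in $L^2(\Omega_T)$ (whose limit must be $(\rho,\bs p)$ by uniqueness of distributional limits, using $\pi^n\to\pi$), Lemma \ref{lem02} upgrades this to strong $L^2(\Omega_T)$ convergence of $\rho^n$ and of each $p_k^n$. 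With strong convergence in hand, $\hat J_G(\pi^n)\to \hat J_G(\pi)$ for every such $G$, hence
$$\hat J_G(\pi) = \lim_n \hat J_G(\pi^n) \leq \liminf_n I_T(\pi^n|\gamma) = q,$$
and taking the supremum over $G$ yields $I_T(\pi|\gamma)\le q$, provided we also know $\mc Q(\pi)<\infty$; but the uniform energy bound above together with lower semicontinuity of $\mc Q$ (noted in the text, since $\mc Q$ is a supremum of continuous convex functionals) gives $\mc Q(\pi)\le \liminf_n \mc Q(\pi^n) \le C(q+1) < \infty$. This proves lower semicontinuity.

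For compact level sets, fix $q\ge 0$ and consider $E_q$. By Lemma \ref{lem01}, $E_q \subset C([0,T],\mc M^0)$, and since $I_T(\cdot|\gamma)$ is lower semicontinuous, $E_q$ is closed; it therefore suffices to show $E_q$ is relatively compact in $D([0,T],\mc M_{+}\times\mc M^d)$. Since $\mc M^0$ consists of measures with uniformly bounded densities (bounded by $|\mc V|$ and $\breve v|\mc V|$), the set $\{\pi_t : \pi \in E_q, t\in[0,T]\}$ is relatively compact in $\mc M_{+}\times\mc M^d$; by the Arzelà–Ascoli criterion for the Skorohod (here uniform, since paths are continuous) topology, it remains to establish equicontinuity, which is precisely the content of estimate \eqref{cont} in the proof of Lemma \ref{lem01}: for $g$ in a countable dense family and $\pi \in E_q$, $|\langle\pi_r,g\rangle - \langle\pi_s,g\rangle| \le C_0(r-s)^{1/2}\{C_1 + q + \|g\|_{1,2,0}^2 + (r-s)^{1/2}\|\Delta g\|_1\}$, which is a uniform modulus of continuity over $E_q$. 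Combining closedness with this relative compactness gives that $E_q$ is compact. The main obstacle in the whole argument is the passage from weak to strong $L^2$ convergence and the attendant verification that the nonlinear terms in $\hat J_G$ pass to the limit — but this is handled cleanly by the Aubin–Lions-type Lemma \ref{lem02} fed by the a priori estimates of Lemma \ref{lem03}; everything else is soft (topological) reasoning.
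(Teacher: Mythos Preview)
Your proof is correct and follows essentially the same approach as the paper: both arguments hinge on upgrading weak to strong $L^2(\Omega_T)$ convergence via Corollary~\ref{corls} and Lemma~\ref{lem02} in order to pass to the limit in the nonlinear terms of $\hat J_G$, and both obtain relative compactness of $E_q$ from the equicontinuity estimate~\eqref{cont}. The only cosmetic difference is organizational---you prove lower semicontinuity in $D([0,T],\mc M_{+}\times\mc M^d)$ first and then deduce closedness of $E_q$, whereas the paper works directly in $C([0,T],\mc M^0)$ and proves closedness and relative compactness of $E_q$ in one sweep; the content is the same.
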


\begin{proof}
We have to show that, for all $q\geq 0$, $E_q$ is compact in
$D([0,T],\mc M)$. Since $E_q\subset C([0,T],\mc M^0)$ and $C([0,T],\mc
M^0)$ is a closed subset of $D([0,T],\mc M)$, we just need to show
that $E_q$ is compact in $C([0,T],\mc M^0)$.

We will show first that $E_q$ is closed in $C([0,T],\mc M^0)$. Fix
$q\in\bb{R}$ and let $\{\pi^n:\,n\geq 1\}$ be a sequence in $E_q$
converging to some $\pi$ in $C([0,T],\mc M^0)$. Then, for all $G\in\mc
C(\overline{\Omega_T})\times [\mc C(\overline{D^d})]^d$,
\begin{eqnarray*}
\lim_{n\to\infty}\int_0^T dt\;\langle\pi^n_t,G_t\rangle = \int_0^T
dt\;\langle\pi_t,G_t\rangle\, . 
\end{eqnarray*}
Notice that this means that $\pi^{n,k}\rightarrow\pi^k$ weakly in
$L^2(\Omega_T)$, for each $k=0,\ldots,d$, which together with Corollary \ref{corls} and Lemma
\ref{lem02} imply that $\pi^{n,k}\rightarrow\pi^k$ strongly in
$L^2(\Omega_T)$. From this fact and the definition of $J_G$ it is easy
to see that, for all $G$ in $\mc C_0^{1,2}(\overline{\Omega_T})\times [\mc C_0^2(\overline{D^d})]^d$,
\begin{equation*}
\lim_{n\to\infty}J_G(\pi_n) = J_G(\pi)\, .
\end{equation*}
This limit, Corollary \ref{corls} and the lower semicontinuity of $\mc
Q$ permit us to conclude that $\mc Q(\pi)\leq C(q)$ and that
$I_T(\pi|\gamma)\leq q$.

We prove now that $E_q$ is relatively compact. To this end, it is
enough to prove that for every continuous function 
$G:\overline{D^d}\to\bb R$, and every $k=0,\ldots,d$,
\begin{eqnarray}
\label{est4}
\lim_{\delta\to 0}\sup_{\pi\in E_q}\sup_{\substack{0\leq s,r\leq T\\
  |r-s|<\delta}}|\langle\pi_r^k,G\rangle-\langle\pi_s^k,G\rangle|=0\, . 
\end{eqnarray}
Since $E_q\subset C([0,T],\mc M^0)$, we may assume by approximations
of $G$ in $L^1({D^d})$ that $G\in\mc C_c^{\infty}({D^d})$. In which
case, \eqref{est4} follows from \eqref{cont}.
\end{proof}

We conclude this section with an explicit formula for the rate
function $I_T(\cdot |\gamma)$.  For each $\pi (t,du) = (\rho,\bs p)(t,u)du$ in
$D([0,T], \mc M^0)$, denote by $H^1_0(\pi)$ the Hilbert space
induced by $\mc C^{1,2}_0(\overline{\Omega_T})$ endowed with the inner
product $\langle\cdot,\cdot\rangle_{\pi}$ defined by
\begin{equation}\label{prodintpi}
\langle H,G\rangle_{\pi}
=\sum_{v\in\mc V}\int_0^Tdt\;\int_{{D^d}}du \chi(\theta_v(\Lambda(\rho,\bs p)))[\tilde{v}\cdot\nabla H][\tilde{v}\cdot\nabla G]
\,. 
\end{equation}
Induced means that we first declare two functions $F,G$ in $\mc
C^{1,2}_0(\overline{\Omega_T})$ to be equivalent if $\langle
F-G,F-G\rangle_{\pi} = 0$ and then we complete the quotient
space with respect to the inner product
$\langle\cdot,\cdot\rangle_{\pi}$. The norm of
$H^1_0(\pi)$ is denoted by $\Vert\cdot\Vert_{\pi}$.

Fix a path $\pi$ in $D([0,T], \mc M^0)$ and a function $H$ in
$H^1_0(\pi)$.  A measurable function $\lambda :
[0,T]\times{D^d} \to \bb R_{+}\times\bb R^d$ is said to be a weak solution of the
nonlinear boundary value parabolic equation
\begin{eqnarray}
\label{f05}
\begin{cases}
\partial_t \lambda\!\!\! &+ \sum_{i=1}^d\sum_{v\in \mathcal{ V}} \tilde{v} \partial_{x_i}\left[\chi(\theta_v(\Lambda(\lambda)))(v_i-\tilde{v}\cdot\partial_{x_i}H)\right] = \frac{1}{2}\Delta \lambda,\\
\lambda(0,\cdot)\!\!\! &=  \gamma(\cdot)\\
\lambda(t,x)\!\!\! &= d(x), x\in \{0,1\}\times\mathbb{T}^{d-1},
\end{cases}
\end{eqnarray}
if it satisfies the following two conditions.

\begin{enumerate}
\item[(i)] For $k=0,\ldots,d$, $\lambda_k$ belongs to $L^2 \left( [0,T] ,
    H^1({D^d})\right)$: 
\begin{equation*}
\int_0^T d s\Big( \int_{D^d} {\parallel\nabla \lambda_k(s,u)\parallel}^2 
du \Big)<\infty \; ;
\end{equation*}

\item[(ii)] For every function $G(t,u)=G_t(u)$ in ${\cc}^{1,2}_0
  (\overline{\Omega_T})$,
$$
\int_{{D^d}} G(T,u)\cdot \lambda(T,u)du - \int_{{D^d}} G(0,u)\cdot\gamma (u) du
$$
$$
=\int_0^T dt \int_{{D^d}} du \left\{ \lambda(t,u)\cdot\partial_t G(t,u) + \frac{1}{2} \lambda(t,u) \cdot \sum_{1\leq i\leq d} \partial_{u_i}^2 G(t,u)\right\}
$$
$$
-\int_0^T dt \int_{\{1\}\times\mathbb{ T}^{d-1}} dS\,\, b(\tilde{u})\cdot \partial_{u_1} G(t,u) + \int_0^T dt \int_{\{0\}\times\mathbb{ T}^{d-1}} dS\,\, a(\tilde{u})\cdot \partial_{u_1} G(t,u)
$$
$$
-\int_0^T dt \int_{{D^d}} du\,\,  \sum_{v\in\mathcal{ V}} \tilde{v}\cdot \chi(\theta_v(\Lambda(\lambda))) \sum_{1\leq i \leq d} v_i \partial_{u_i} G(t,u),
$$
$$+\sum_{v\in\mc V}\int_0^Tdt\;\int_{{D^d}}du \chi(\theta_v(\Lambda(\lambda)))[\tilde{v}\cdot\nabla H][\tilde{v}\cdot\nabla G].$$
\end{enumerate}

Uniqueness of solutions of equation \eqref{f05} follows from the same arguments of the uniqueness proved in \cite{s}.

\begin{lemma}
\label{lem05}
Assume that $\pi(t,du) = (\rho,\bs p)(t,u)du$ in $D([0,T],\mc M^0)$ has finite
rate function: $I_T(\pi|\gamma)<\infty$. Then, there exists a function
$H$ in $H^1_0(\pi)$ such that $(\rho,\bs p)$ is a weak solution to
\eqref{f05}.  Moreover,
\begin{eqnarray}
\label{f06}
I_T(\pi|\gamma) = \frac{1}{4}\Vert H\Vert_{\pi}^2\, .
\end{eqnarray}
\end{lemma}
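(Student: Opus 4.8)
The plan is to read off from the representation \eqref{rf1} that $\bb J_G(\pi)$ is an affine functional of the test field $G$ minus a positive quadratic form, to represent the affine part by the Riesz theorem in the Hilbert space $H^1_0(\pi)$, and to recognize the resulting identity as the weak formulation of \eqref{f05}.

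First I reduce to a setting where \eqref{rf1} applies. Since $I_T(\pi|\gamma)<\infty$, the definition of $I_T$ forces $\mc Q(\pi)<\infty$, so $\rho,p_1,\dots,p_d\in L^2([0,T],H^1({D^d}))$; by Lemma \ref{lem01}, $\pi\in D_\gamma$; and by Lemma \ref{lem03}, $\partial_t p_k\in L^2([0,T],H^{-1}({D^d}))$ for each $k$, so $\pi\in W$ and $I_T(\pi|\gamma)=\sup_{G}\bb J_G(\pi)$, the supremum running over $G\in L^2([0,T],[H^1_0({D^d})]^{d+1})$. I then write $\bb J_G(\pi)=\ell_\pi(G)-\|G\|_\pi^2$, where
$$\ell_\pi(G)\;=\;\<\!\<\partial_t(\rho,\bs p),G\>\!\>_{-1,1}\;+\;\frac12\int_0^T\! dt\int_{{D^d}}\!\nabla(\rho,\bs p)\cdot\nabla G\;+\;\int_0^T\! dt\int_{{D^d}}\sum_{v\in\mc V}\tilde v\cdot\chi(\theta_v(\Lambda(\rho,\bs p)))\sum_{1\le i\le d}v_i\partial_{u_i}G$$
is the sum of the three affine terms of $\bb J_G$, and $\|G\|_\pi^2=\langle G,G\rangle_\pi$ is precisely the last, quadratic, term of $\bb J_G$, by \eqref{prodintpi}.

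For any test field $G$ and any $t\in\bb R$, replacing $G$ by $tG$ in the variational formula gives $t\,\ell_\pi(G)-t^2\|G\|_\pi^2=\bb J_{tG}(\pi)\le I_T(\pi|\gamma)$, and optimizing over $t$ yields $\ell_\pi(G)^2\le 4\,I_T(\pi|\gamma)\,\|G\|_\pi^2$. In particular $\ell_\pi$ annihilates the kernel of $\|\cdot\|_\pi$, hence descends to the quotient space and, being $\|\cdot\|_\pi$-bounded there, extends uniquely to a bounded linear functional on the completion $H^1_0(\pi)$. By the Riesz representation theorem there is a unique $H\in H^1_0(\pi)$ with $\ell_\pi(G)=\langle H,G\rangle_\pi$ for all $G\in H^1_0(\pi)$, in particular for all $G\in\mc C^{1,2}_0(\overline{\Omega_T})$. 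For such $G$, the identity $\ell_\pi(G)=\langle H,G\rangle_\pi$ becomes exactly condition (ii) in the definition of a weak solution of \eqref{f05} once one integrates by parts in time — legitimate since each $p_k\in L^2([0,T],H^1({D^d}))$ with $\partial_tp_k\in L^2([0,T],H^{-1}({D^d}))$ has a representative in $C([0,T],L^2({D^d}))$, the term at $t=0$ being $\gamma$ because $\pi\in D_\gamma$ — and in space, which produces the lateral boundary integrals over $\{0,1\}\times\bb T^{d-1}$ carrying $a$ and $b$ by the trace identity $\Tr(\rho_t,\bs p_t)=d$, again because $\pi\in D_\gamma$; condition (i) is the $L^2([0,T],H^1({D^d}))$ regularity already noted. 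Thus $(\rho,\bs p)$ solves \eqref{f05} weakly with this $H$. Finally, since $G\mapsto\bb J_G(\pi)=\langle H,G\rangle_\pi-\|G\|_\pi^2$ is $\|\cdot\|_\pi$-continuous and $\mc C^{1,2}_0(\overline{\Omega_T})$ is dense in $H^1_0(\pi)$ by construction, $I_T(\pi|\gamma)=\sup_{G\in H^1_0(\pi)}\{\langle H,G\rangle_\pi-\|G\|_\pi^2\}=\frac14\|H\|_\pi^2$, the supremum being attained at $G=\frac12 H$; this is \eqref{f06}.

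The affine-minus-quadratic structure, the $tG$-trick and the Riesz step are the routine backbone; the delicate point is the identification of the abstract identity $\ell_\pi(G)=\langle H,G\rangle_\pi$ with the explicit weak formulation (ii). This hinges on the time-continuity of the $p_k$ (a Lions--Aubin type fact) so that the integration by parts in time is valid and reproduces the initial datum $\gamma$, and on the lateral trace identity $\Tr(\rho_t,\bs p_t)=d$ inherited from $\pi\in D_\gamma$ so that the boundary terms match; one also uses that $\|\cdot\|_\pi$ is dominated by the $L^2([0,T],H^1)$-norm (as $\chi\le 1/4$), which makes the density statement transparent.
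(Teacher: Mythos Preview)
Your argument is correct and is precisely the standard Riesz-representation route the paper has in mind: the paper omits the proof entirely, pointing to Lemma~10.5.3 in \cite{KL}, and that argument is exactly the ``affine minus quadratic'' decomposition $\bb J_G(\pi)=\ell_\pi(G)-\|G\|_\pi^2$, the $G\mapsto tG$ scaling bound, Riesz in $H^1_0(\pi)$, and the integration by parts (in time via $p_k\in L^2_tH^1_x$ with $\partial_t p_k\in L^2_tH^{-1}_x$, in space via $\Tr(\rho_t,\bs p_t)=d$) to recover the explicit weak formulation~(ii). Nothing to add.
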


The proof of this lemma is similar to the one of Lemma 10.5.3 in
\cite{KL} and is therefore omitted.

\section{$I_T(\cdot|\gamma)$-Density}
\label{sec5}

The main result of this section, stated in Theorem \ref{th5}, asserts
that any trajectory $\lambda_t$, $0\le t\le T$, with finite rate
function, $I_T(\lambda|\gamma)<\infty$, can be approximated by a
sequence of smooth trajectories $\{\lambda^n : n\ge 1\}$ such that
\begin{equation*}
\lambda^n \longrightarrow \lambda \quad\text{and}\quad 
I_T(\lambda^n|\gamma)  \longrightarrow  I_T(\lambda|\gamma)\;.
\end{equation*}
This is one of the main steps in the proof of the lower bound of the
large deviations principle for the empirical measure.  The proof is mainly based on the regularizing effects of the hydrodynamic
equation. This strategy was introduced by \cite{flm}.

A subset $A$ of $D([0,T],\mc M_{+}\times \mc M^d)$ is said to be
$I_T(\cdot|\gamma)$-dense if for every $\pi$ in $D([0,T],\mc M_{+}\times\mc M^d)$ such
that $I_T(\pi|\gamma)<\infty$, there exists a sequence $\{\pi^n : n\ge
1\}$ in $A$ such that $\pi^n$ converges to $\pi$ and
$I_T(\pi^n|\gamma)$ converges to $I_T(\pi|\gamma)$.

Let $\Pi_1$ be the subset of $D([0,T],\mc M^0)$ consisting of paths
$\pi(t,du) = (\rho,\bs p)(t,u)du$ whose density $(\rho,\bs p)$ is a weak solution of
the hydrodynamic equation \eqref{problema2} in the time interval
$[0,\delta]$ for some $\delta>0$.

\begin{lemma}\label{itdensidade1}
The set $\Pi_1$ is $I_T(\cdot|\gamma)$-dense.
\end{lemma}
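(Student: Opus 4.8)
The plan is to take an arbitrary path $\pi(t,du) = (\rho,\bs p)(t,u)du$ in $D([0,T],\mc M^0)$ with $I_T(\pi|\gamma)<\infty$ and to show that it can be approximated, in the Skorohod topology and in rate function, by paths whose density solves the hydrodynamic equation \eqref{problema2} on an initial time interval $[0,\delta]$. By Lemma \ref{lem01}, $\pi$ belongs to $D_\gamma$, so it is continuous, has finite energy, the correct initial profile $\gamma$, and the correct boundary traces $d(\cdot)$; by Lemma \ref{lem05} there is an $H\in H^1_0(\pi)$ with $(\rho,\bs p)$ a weak solution of \eqref{f05} and $I_T(\pi|\gamma) = \tfrac14\|H\|_\pi^2$. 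The natural construction is: for each $\delta>0$, let $(\rho^\delta,\bs p^\delta)$ agree with the solution of the hydrodynamic equation \eqref{problema2} (with initial data $\gamma$) on $[0,\delta]$, and agree with a time-shifted/reparametrized copy of $(\rho,\bs p)$ afterwards. More precisely, define $\pi^\delta_t = \pi^{hyd}_t$ for $t\in[0,\delta]$ where $\pi^{hyd}$ is the hydrodynamic trajectory, and for $t\in[\delta,T]$ interpolate appropriately — the standard trick from \cite{flm} is to set $\lambda^\delta(t,\cdot)$ to equal the hydrodynamic solution on $[0,\delta]$ and then run, on $[\delta, T]$, a path obtained by gluing; one must check the glued path is continuous and lies in $D([0,T],\mc M^0)$.

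The key steps, in order, are as follows. First, I would invoke Theorem \ref{limhyd2} (or rather its PDE content, the existence–uniqueness and regularity of weak solutions to \eqref{problema2}) to get the hydrodynamic trajectory $\pi^{hyd}$ emanating from $\gamma$, which automatically has $I_T(\pi^{hyd}|\gamma)=0$ by Corollary \ref{rfhe}. Second, I would define the approximating path $\pi^\delta$ by concatenation at time $\delta$: on $[0,\delta]$ it is $\pi^{hyd}$, and on $[\delta,T]$ it is a suitable rescaling of $\pi$ so that $\pi^\delta_\delta$ matches $\pi^{hyd}_\delta$ and $\pi^\delta_T = \pi_T$; because $\pi^{hyd}_\delta \to \gamma = \pi_0$ as $\delta\to 0$ and $\pi$ is continuous, this concatenation converges to $\pi$ in $C([0,T],\mc M^0)$, hence in the Skorohod topology. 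Third — the analytic heart — I would show $\limsup_{\delta\to 0} I_T(\pi^\delta|\gamma) \le I_T(\pi|\gamma)$; combined with lower semicontinuity of $I_T$ (Theorem \ref{th4}) this gives $I_T(\pi^\delta|\gamma)\to I_T(\pi|\gamma)$. For this one writes $I_T(\pi^\delta|\gamma)$ using the variational formula \eqref{rf1} or the representation \eqref{f06}: the contribution from $[0,\delta]$ vanishes (the path solves the hydrodynamic equation there, so the associated $H^\delta$ is $0$ on that slab, contributing $0$ to $\tfrac14\|H^\delta\|^2_{\pi^\delta}$), and the contribution from $[\delta,T]$ is controlled by $I_T(\pi|\gamma)$ up to an error going to $0$ with $\delta$, using that $\chi(\theta_v(\Lambda(\cdot)))$ is bounded and that the energy of $\pi$ is finite (so the "tail" integrals over $[0,\delta]$ are small). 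One must be careful that reparametrizing time on $[\delta,T]$ introduces a Jacobian factor $(T-\delta)/T \to 1$ that only perturbs the rate function by a vanishing amount.

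I expect the main obstacle to be step three: verifying that the rate function of the glued path genuinely converges, rather than merely staying bounded. The subtlety is that $I_T$ is not simply additive over time slabs — the boundary/trace conditions and the finite-energy requirement must hold on all of $[0,T]$, and the "matching" at time $\delta$ must be done so that $\pi^\delta$ stays in $D_\gamma$ (in particular $\Tr(\rho^\delta_t) = d_0$ for a.e. $t$, which is automatic on $[0,\delta]$ from \eqref{problema2} and inherited from $\pi$ on $[\delta,T]$). The cleanest route is probably to work with the explicit formula \eqref{f06}: exhibit the external field $H^\delta$ for $\pi^\delta$ explicitly — $H^\delta \equiv 0$ on $[0,\delta]$ and $H^\delta = H$ (suitably time-reparametrized) on $[\delta,T]$ — check $H^\delta \in H^1_0(\pi^\delta)$, and then estimate $\|H^\delta\|^2_{\pi^\delta} = \sum_v \int_\delta^T\!\!\int_{D^d} \chi(\theta_v(\Lambda(\rho^\delta,\bs p^\delta)))|\tilde v\cdot\nabla H|^2 \le \|H\|^2_\pi$ plus a correction controlled by continuity of $(\rho,\bs p)$ and dominated convergence as $\delta\to0$. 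The remaining routine check is that $\pi^\delta\in D([0,T],\mc M^0)$, i.e. that the glued density stays in the physical range $[0,|\mc V|]\times[-\breve v|\mc V|,\breve v|\mc V|]^d$, which follows since both pieces do.
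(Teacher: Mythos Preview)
Your proposal has a genuine gap at the gluing step. You want to set $\pi^\delta_t = \pi^{hyd}_t$ on $[0,\delta]$ and then attach ``a suitable rescaling of $\pi$ so that $\pi^\delta_\delta$ matches $\pi^{hyd}_\delta$.'' But a time reparametrization of $\pi$ on $[\delta,T]$ does not change the \emph{value} of $\pi$ at the left endpoint: if you compress $\pi|_{[0,T]}$ into $[\delta,T]$, the resulting path still starts at $\pi_0 = \gamma$, not at $\pi^{hyd}_\delta$. Since in general $\pi^{hyd}_\delta \neq \gamma$ (unless $\gamma$ is a stationary profile), your glued path has a jump at time $\delta$; by Lemma \ref{lem01} any path with $I_T<\infty$ is continuous, so $I_T(\pi^\delta|\gamma)=\infty$ and nothing can be concluded. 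The phrase ``suitable rescaling'' papers over this: no rescaling in time or space of $\pi$ will both preserve the boundary trace $d(\cdot)$ and move the initial value from $\gamma$ to $\pi^{hyd}_\delta$.

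The paper's fix is the missing idea: use a \emph{three}-piece path. On $[0,\delta]$ run the hydrodynamic solution $\tau$; on $[\delta,2\delta]$ run the \emph{time-reversed} hydrodynamic solution $t\mapsto \tau(2\delta - t,\cdot)$, which brings you back to $\tau(0,\cdot)=\gamma$ at time $2\delta$; on $[2\delta,T]$ run the pure time shift $(\rho,\bs p)(t-2\delta,\cdot)$. This is automatically continuous, stays in $D_\gamma$, and the rate function splits cleanly: the first piece contributes $0$ (Corollary \ref{rfhe}), the third piece contributes at most $I_T(\pi|\gamma)$ (straight time translation, no Jacobian), and the middle piece --- where $\partial_t(\rho^\delta,\bs p^\delta) = -\tfrac12\Delta(\rho^\delta,\bs p^\delta)+\sum_v\tilde v[v\cdot\nabla\chi_v]$ --- reduces to a variational expression over $[0,\delta]$ that is finite by an energy estimate (Lemma \ref{energiachi}) and therefore tends to $0$ as $\delta\downarrow 0$. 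Your route via Lemma \ref{lem05} and an explicit $H^\delta$ could in principle be made to work for the first and third pieces of this corrected construction, but you would still need a separate argument for the reversed middle piece, where no field $H$ is readily available; the paper handles it directly with the sup-formula instead.
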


\begin{proof}
Fix $\pi(t,du) = (\rho,\bs p)(t,u) du$ in $D([0,T],\mc M_{+}\times \mc M^d)$ such that
$I_T(\pi|\gamma)<\infty$. By Lemma \ref{lem01}, $\pi$ belongs to
$C([0,T],\mc M^0)$. For each $\delta>0$, let $(\rho^{\delta},\bs p^\delta)$ be the
path defined as
\begin{equation*}
(\rho^{\delta},\bs p^\delta)(t,u) = 
\begin{cases}
\tau(t,u) & \hbox{ if } 0\leq t\leq\delta\, , \\
\tau(2\delta-t,u) & \hbox{ if } \delta\leq t\leq 2\delta\, , \\
(\rho,\bs p)(t-2\delta,u) & \hbox{ if } 2\delta\leq t\leq T\, ,
\end{cases}
\end{equation*}
where $\tau$ is the weak solution of the hydrodynamic equation
\eqref{problema2} starting at $\gamma$.  It is clear that $\pi^\delta(t,du)
= (\rho^{\delta},\bs p^\delta)(t,u)du$ belongs to $D_{\gamma}$, because so do $\pi$
and $\tau$ and that $\mc Q(\pi^{\delta})\leq \mc Q(\pi) +2\mc
Q(\tau)<\infty$. Moreover, $\pi^{\delta}$ converges to $\pi$ as
$\delta\downarrow 0$ because $\pi$ belongs to $\mc C([0,T],\mc M^0)$. By
the lower semicontinuity of $I_T(\cdot|\gamma)$, $I_T(\pi|\gamma)\leq
\liminf_{\delta\to 0} I_T(\pi^{\delta}|\gamma)$. Then, in order to
prove the lemma, it is enough to prove that $I_T(\pi|\gamma)\geq
\limsup_{\delta\to 0} I_T(\pi^{\delta}|\gamma)$. To this end,
decompose the rate function $I_T(\pi^{\delta}|\gamma)$ as the sum of
the contributions on each time interval $[0,\delta]$,
$[\delta,2\delta]$ and $[2\delta,T]$. The first contribution vanishes
because $\pi^{\delta}$ solves the hydrodynamic equation in this
interval. On the time interval $[\delta,2\delta]$,
$\partial_t\rho^\delta_t = -\partial_t\tau_{2\delta-t}=
-\frac 12\Delta\tau_{2\delta-t} + \sum_{v\in\mc V}\tilde{v}[v\cdot\nabla\chi(\theta_v(\Lambda(\tau_{2\delta-t})))] =-\frac 12\Delta(\rho^\delta_t,\bs p^\delta_t)+ \sum_{v\in\mc V}\tilde{v}[v\cdot\nabla\chi(\theta_v(\Lambda(\rho^\delta_t,\bs p^\delta_t)))] $.
In particular, the second contribution is equal to
\begin{eqnarray*}
\sup_{G\in \mc C^{1,2}_0(\overline{\Omega_T})\times [\mc C(\overline{D^d})]^d}
\Big\{\sum_{i=1}^d\int_{0}^{\delta}ds\int_{{D^d}}du\;
\partial_{x_i}(\rho,\bs p)\cdot\partial_{x_i} G -
\sum_{v\in\mc V}\int_0^\delta dt\;\int_{{D^d}}du \chi(\theta_v(\Lambda(\rho,\bs p)))[\tilde{v}\cdot\nabla G]^2\Big\} 
\end{eqnarray*}
which, by Lemma \ref{energiachi} is bounded from above,  and therefore this last expression converges to zero as
$\delta\downarrow 0$. Finally, the third contribution is bounded by
$I_T(\pi|\gamma)$ because $\pi^{\delta}$ in this interval is just a
time translation of the path $\pi$.
\end{proof}

Let $\Pi_2$ be the set of all paths $\pi$ in $\Pi_1$ with the property
that for every $\delta>0$ there exists $\epsilon>0$ such that, for $k=0,\ldots,d$,
$d(\pi_t^k(\cdot),\partial\mf U)\geq\epsilon$ for all $t\in[\delta,T]$, where $\partial\mf U$ stands for the boundary of $\mf U$.

We begin by proving an auxiliary lemma.

\begin{lemma}\label{quasiconcavidade}
Let $\pi,\lambda\in \mf U$, and let $\pi^\epsilon = (1-\epsilon)\pi+\epsilon\lambda$, $0\le\epsilon\le 1$. Then, for all
$v\in\mc V$, we have
$$\theta_v(\Lambda(\pi^\epsilon)) = (1-\epsilon)\theta_v(\Lambda(\pi)) + \epsilon\theta_v(\Lambda(\lambda)).$$
\end{lemma}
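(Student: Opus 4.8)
The plan is to reduce the claimed affine identity to the structure of the measures $m_{\boldsymbol\lambda}$ defined in \eqref{mprod}. Recall that for each $\boldsymbol\lambda$ the marginal satisfies $\theta_v(\boldsymbol\lambda) = \exp\{\lambda_0 + \sum_k \lambda_k v_k\}/(1+\exp\{\lambda_0 + \sum_k \lambda_k v_k\})$, and that $\Lambda = (\rho,\boldsymbol p)^{-1}$. The first observation I would make is that the map $\mf U \ni (\rho,\boldsymbol p) \mapsto (\theta_v(\Lambda(\rho,\boldsymbol p)))_{v\in\mc V}$ is exactly the inverse of the map sending the vector of velocity-densities $\bs\theta = (\theta_v)_{v\in\mc V}$ to $\bs I$-averages, namely $\bs\theta \mapsto \big(\sum_v \theta_v,\ \sum_v v_1\theta_v,\ \dots,\ \sum_v v_d\theta_v\big)$. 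This latter map is \emph{linear} in $\bs\theta$. Hence the content of the lemma is the assertion that its inverse is also affine along segments, which is what must be verified.

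The key step is therefore: the linear map $L:\R^{\mc V}\to\R^{d+1}$, $L(\bs\theta) = \big(\sum_v\theta_v, \sum_v v_1\theta_v,\dots,\sum_v v_d\theta_v\big)$, restricted to the image of the parametrization $\boldsymbol\lambda\mapsto\bs\theta(\boldsymbol\lambda)$, is a bijection onto $\mf U$ whose inverse is the restriction of a genuinely linear (or at worst affine) map. Concretely I would argue as follows. By the discussion in Subsection \ref{massmomentum}, $(\rho,\boldsymbol p)$ is a diffeomorphism of $\R^{d+1}$ onto $\mf U$, so $\Lambda$ is well defined, and $\theta_v\circ\Lambda$ is a smooth function on $\mf U$. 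Since $L\big(\bs\theta(\boldsymbol\lambda)\big) = (\rho,\boldsymbol p)(\boldsymbol\lambda)$ by the formulas for $\rho(\boldsymbol\lambda)$ and $p_k(\boldsymbol\lambda)$, we get, writing $\Theta(\rho,\boldsymbol p):=(\theta_v(\Lambda(\rho,\boldsymbol p)))_v$, the identity $L\circ\Theta = \mathrm{id}_{\mf U}$. Now the point is that $\Theta$ maps into the affine subspace $S := L^{-1}(\text{point})$-translates, i.e. into a fixed affine subspace $\mc S$ of $\R^{\mc V}$ of dimension $d+1$ on which $L$ is injective: indeed, one must use the hypothesis (stated in Subsection \ref{massmomentum}) that mass and momentum are the \emph{only} conserved quantities of the walk, which is precisely the statement that the $d+1$ linear functionals $\bs\theta\mapsto\sum_v v_k\theta_v$ (with $v_0\equiv 1$) together with the constraint that $\bs\theta$ lies in the $(d+1)$-dimensional manifold $\{\bs\theta(\boldsymbol\lambda):\boldsymbol\lambda\in\R^{d+1}\}$ pin down $\bs\theta$ uniquely — equivalently, that the manifold $\{\bs\theta(\boldsymbol\lambda)\}$ is a graph over the coordinates $(\rho,\boldsymbol p)$ \emph{and} is itself an affine slice. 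Wait — that is not automatic, so I expect the real work to be in showing that $\Theta$ is the restriction of an \emph{affine} map.

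Here is the cleaner route I would actually take, avoiding the delicate point above: fix $\pi,\lambda\in\mf U$ and set $\pi^\epsilon = (1-\epsilon)\pi+\epsilon\lambda$. Define $\bs\theta^\epsilon_v := (1-\epsilon)\theta_v(\Lambda(\pi)) + \epsilon\,\theta_v(\Lambda(\lambda))$; this is an affine interpolation of two vectors in $(0,1)^{\mc V}$, hence $\bs\theta^\epsilon_v\in(0,1)$ for each $v$. It suffices to exhibit a chemical potential $\boldsymbol\lambda^\epsilon$ with $\theta_v(\boldsymbol\lambda^\epsilon) = \bs\theta^\epsilon_v$ for all $v$ and $(\rho,\boldsymbol p)(\boldsymbol\lambda^\epsilon) = \pi^\epsilon$; by injectivity of $(\rho,\boldsymbol p)$ this forces $\boldsymbol\lambda^\epsilon = \Lambda(\pi^\epsilon)$, giving the claim. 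By linearity of $L$ we automatically have $L(\bs\theta^\epsilon) = (1-\epsilon)L(\bs\theta(\Lambda(\pi))) + \epsilon L(\bs\theta(\Lambda(\lambda))) = (1-\epsilon)\pi + \epsilon\lambda = \pi^\epsilon$, so the mass–momentum constraint is for free. The only thing left is to produce $\boldsymbol\lambda^\epsilon$ realizing the prescribed $\bs\theta^\epsilon$; this is where the specific form of $\theta_v$ and the structure of $\mc V$ must be used, together with the two-conserved-quantities hypothesis — it guarantees that the logits $\mathrm{logit}(\theta_v) = \lambda_0 + \sum_k\lambda_k v_k$ depend on $v$ only through $(1,v_1,\dots,v_d)$, so that the admissible vectors $\bs\theta$ are exactly those for which $\mathrm{logit}(\theta_v)$ is an affine function of $v$; one checks that $\bs\theta^\epsilon$ inherits this property — but note $\mathrm{logit}$ is \emph{not} affine, so this inheritance is itself the crux.

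Thus the main obstacle, and the step I would spend the most care on, is verifying that $\mathrm{logit}(\bs\theta^\epsilon_v)$ is again an affine function of $v$ on $\mc V$. This will follow from the fact that both $\Theta(\pi)$ and $\Theta(\lambda)$ lie in the \emph{same} $(d+1)$-dimensional affine subspace of $\R^{\mc V}$ — the image of the parametrization being, by the two-conserved-quantities assumption and a rank count, precisely an affine slice — so their convex combination lies there too; alternatively one argues directly that the set $\Theta(\mf U)$ is convex and is cut out by affine equations in the $\theta_v$. Once $\bs\theta^\epsilon\in\Theta(\mf U)$ is established, setting $\boldsymbol\lambda^\epsilon$ to be its (unique) preimage under $\boldsymbol\lambda\mapsto\bs\theta(\boldsymbol\lambda)$ and invoking $L\circ\bs\theta = (\rho,\boldsymbol p)$ together with injectivity of $(\rho,\boldsymbol p)$ finishes the proof. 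I would present the subspace/slice argument carefully and leave the logit bookkeeping as a short verification.
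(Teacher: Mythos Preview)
You have correctly isolated the content of the claim: writing $L(\bs\theta)=(\sum_v\theta_v,\sum_v v_1\theta_v,\dots,\sum_v v_d\theta_v)$ and $\Theta=\bs\theta\circ\Lambda$, one has $L\circ\Theta=\mathrm{id}_{\mf U}$ with $L$ linear, and the lemma is exactly the assertion that $\Theta$ is affine. The paper's proof is the one--line version of this observation: the $\theta_v(\Lambda(\lambda))$ solve the linear system $L\bs\theta=\lambda$, ``therefore'' they are linear in $\lambda$. You rightly notice that this is only immediate when the system is square, i.e.\ $|\mc V|=d+1$, and you propose to rescue the general case by showing that $\Theta(\mf U)$ is a $(d+1)$--dimensional affine slice of $\R^{\mc V}$, appealing to the hypothesis that mass and momentum are the only conserved quantities.

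That rescue does not work. The image $\{\bs\theta(\boldsymbol\lambda):\boldsymbol\lambda\in\R^{d+1}\}$ is a $(d+1)$--dimensional \emph{manifold}, but it is not affine when $|\mc V|>d+1$. Take $d=2$ and $\mc V=\{\pm e_1,\pm e_2\}$; this set satisfies the conserved--quantities hypothesis (the only nontrivial collision is $\{e_1,-e_1\}\leftrightarrow\{e_2,-e_2\}$, and the invariant linear functionals are exactly the span of $I_0,I_1,I_2$). The curve $t\mapsto\bs\theta(t,t,0)=(\sigma(2t),\tfrac12,\sigma(t),\sigma(t))$ is not a straight line, since $(\sigma(2t)-\tfrac12)/(\sigma(t)-\tfrac12)=\tanh(t)/\tanh(t/2)$ is nonconstant. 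Consequently your ``inheritance'' step fails: the convex combination $(1-\epsilon)\Theta(\pi)+\epsilon\Theta(\lambda)$ always satisfies the $d+1$ linear constraints $L\bs\theta=\pi^\epsilon$, but it generally lies \emph{off} the exponential--family manifold, hence cannot equal $\Theta(\pi^\epsilon)$; one can check this numerically for the endpoints $\boldsymbol\lambda=(0,0,0)$ and $\boldsymbol\lambda=(1,1,0)$ above. The conserved--quantities assumption pins down the \emph{rank} of $L$, not the shape of the manifold. So the crux you identified --- that $\mathrm{logit}$ is not affine --- is a genuine obstruction, not a bookkeeping detail, and neither your argument nor the paper's short proof goes through once $|\mc V|>d+1$; both are valid only in the square case $|\mc V|=d+1$ illustrated in the Remark.
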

\begin{proof}
Fix some $\lambda\in\mf U$. Observe that 
$$\left(\sum_{v\in\mc V}\theta_v(\Lambda(\lambda)),\sum_{v\in\mc V}v_1\theta_v(\Lambda(\lambda)),\ldots,\sum_{v\in\mc V}v_d\theta_v(\Lambda(\lambda)) \right) = (\lambda_0,\lambda_1,\ldots,\lambda_d)$$
is a linear system with $d+1$ equations and $|\mc V|$ unknowns (given by $\theta_v(\Lambda(\lambda))$, for $v\in\mc V$). Therefore, any solution of this linear system can be expressed as a linear combination of $\lambda_i$, $i=0,1,\ldots,d$. The proof follows from this fact.
\end{proof}
\begin{remark}
In the particular case when $d=1$ and the set of velocities is $\mc V = \{v,-v\}\subset\bb R$, a simple computation gives the unique solution
$$\theta_v(\Lambda(\lambda_0,\lambda_1)) = \frac{\lambda_0}{2} + \frac{\lambda_1}{2v}\quad\hbox{and}\quad \theta_{-v}(\Lambda(\lambda_0,\lambda_1))=\frac{\lambda_0}{2} - \frac{\lambda_1}{2v}.$$
\end{remark}

\begin{lemma}
The set $\Pi_2$ is $I_T(\cdot|\gamma)$-dense.
\end{lemma}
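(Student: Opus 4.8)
The plan is to perturb a given path $\pi\in\Pi_1$ toward the reference profile $\kappa$ (the smooth interpolant of the boundary data introduced just before Lemma~\ref{lem01}) so that its density and momentum stay uniformly away from the boundary $\partial\mf U$ after any positive time, while controlling the rate function. Concretely, fix $\pi(t,du)=(\rho,\bs p)(t,u)du$ in $\Pi_1$ with $I_T(\pi|\gamma)<\infty$; by definition $(\rho,\bs p)$ solves the hydrodynamic equation \eqref{problema2} on $[0,\delta_0]$ for some $\delta_0>0$. For $\epsilon\in(0,1)$ define $\pi^\epsilon$ by its density $(\rho^\epsilon,\bs p^\epsilon)=(1-\epsilon)(\rho,\bs p)+\epsilon\,\bar\kappa$, where $\bar\kappa(t,u)$ is itself the solution of the hydrodynamic equation started from $\kappa$ (so that $\pi^\epsilon$ retains the correct initial profile $\gamma$ only in the limit; to fix the initial condition exactly one instead glues, as in Lemma~\ref{itdensidade1}, a short time-reversed hydrodynamic piece at the start, an adjustment I will suppress here). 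The first observation is that $\bar\kappa$, being a bounded weak solution with smooth boundary data bounded away from $0$ and $1$, has density and momentum taking values in a compact subset of $\mf U$ for $t\ge\delta_1$, for every $\delta_1>0$; hence the convex combination $(\rho^\epsilon,\bs p^\epsilon)(t,u)$ lies at distance at least $\epsilon\cdot c(\delta_1)$ from $\partial\mf U$ for $t\ge\delta_1$, which is exactly the defining property of $\Pi_2$.

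Next I would show $\pi^\epsilon\in\Pi_1$ and $\pi^\epsilon\to\pi$ as $\epsilon\downarrow0$. Convergence is immediate since $(\rho^\epsilon,\bs p^\epsilon)\to(\rho,\bs p)$ uniformly in the relevant (weak) topology and $\pi\in C([0,T],\mc M^0)$ by Lemma~\ref{lem01}. That $\pi^\epsilon$ still solves the hydrodynamic equation on a (possibly smaller) initial interval is where Lemma~\ref{quasiconcavidade} enters: because $\theta_v(\Lambda(\cdot))$ is \emph{affine} along the segment joining two points of $\mf U$, the nonlinear flux term $\sum_{v}\tilde v\,[v\cdot\nabla\chi(\theta_v(\Lambda(\rho^\epsilon,\bs p^\epsilon)))]$ decomposes cleanly, and on the interval where both $\pi$ and $\bar\kappa$ solve \eqref{problema2} one checks that the convex combination does too (the Laplacian is linear, and the relevant structure of $\chi\circ\theta_v\circ\Lambda$ along the segment is controlled by the lemma). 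So $\pi^\epsilon\in\Pi_1\cap\Pi_2$.

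The substantive point is the upper semicontinuity $\limsup_{\epsilon\to0}I_T(\pi^\epsilon|\gamma)\le I_T(\pi|\gamma)$; combined with lower semicontinuity of $I_T(\cdot|\gamma)$ (Theorem~\ref{th4}) this gives the required convergence of rate functions. Using the variational characterization \eqref{rf1} together with the representation $I_T(\lambda|\gamma)=\tfrac14\|H\|_\lambda^2$ from Lemma~\ref{lem05}, I would argue that the external field $H^\epsilon$ associated with $\pi^\epsilon$ can be taken close to that of $\pi$: writing out $\bb J_G(\pi^\epsilon)$ and using the affineness from Lemma~\ref{quasiconcavidade} to linearize $\chi(\theta_v(\Lambda(\rho^\epsilon,\bs p^\epsilon)))$ in $\epsilon$, the quadratic functional defining $\|\cdot\|_{\pi^\epsilon}$ converges to $\|\cdot\|_\pi$ (one needs that $\chi(\theta_v(\Lambda(\rho,\bs p)))$ is bounded and that the $H^1$ norms of $(\rho^\epsilon,\bs p^\epsilon)$ are controlled by those of $(\rho,\bs p)$ and $\bar\kappa$, the latter finite by the finite-energy statement in Theorem~\ref{limhyd2}); hence $I_T(\pi^\epsilon|\gamma)\to I_T(\pi|\gamma)$.

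\textbf{Main obstacle.} The delicate step is the upper bound on $\limsup_\epsilon I_T(\pi^\epsilon|\gamma)$ near $t=0$: there $(\rho,\bs p)$ may approach $\partial\mf U$, so $\chi(\theta_v(\Lambda(\cdot)))$ degenerates and the inner product $\langle\cdot,\cdot\rangle_{\pi^\epsilon}$ is only lower bounded, not two-sidedly controlled, by $\langle\cdot,\cdot\rangle_\pi$. This is handled exactly as in Lemma~\ref{itdensidade1}: split $I_T(\pi^\epsilon|\gamma)$ into the contributions from $[0,\delta]$ and $[\delta,T]$; on $[0,\delta]$, $\pi^\epsilon$ solves (for appropriate $\epsilon$ relative to $\delta$) the hydrodynamic equation, so that contribution vanishes, and one then sends $\delta\to0$ after $\epsilon\to0$ along a suitable subsequence — an energy estimate of the type in Lemma~\ref{energiachi} keeps the crossover piece on $[\delta,2\delta]$ negligible. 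Away from the boundary of $\mf U$ the linearization argument is routine.
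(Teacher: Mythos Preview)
There is a genuine gap. Your perturbation target is the wrong one, and the argument you give for membership in $\Pi_1$ does not work.

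In the paper the convex combination is taken with $\tau$, the solution of \eqref{problema2} started from the \emph{same} initial profile $\gamma$. This has two crucial consequences you lose by using $\bar\kappa$ started from $\kappa$. First, the initial condition is preserved exactly: $(1-\epsilon)\gamma+\epsilon\gamma=\gamma$, so no gluing is needed. Second, and more importantly, on the interval $[0,\delta_0]$ where $\pi$ already solves \eqref{problema2}, both $\pi$ and $\tau$ solve the \emph{same} initial--boundary value problem; by the uniqueness statement in Theorem~\ref{limhyd2} they coincide there, hence $\pi^\epsilon=\pi=\tau$ on $[0,\delta_0]$ and $\pi^\epsilon\in\Pi_1$ is automatic.

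Your route requires instead that a convex combination of two solutions of \eqref{problema2} with different initial data be again a solution, which you justify by Lemma~\ref{quasiconcavidade}. But that lemma only says $\theta_v(\Lambda(\cdot))$ is affine; the flux involves $\chi(\theta_v(\Lambda(\cdot)))$ with $\chi(r)=r(1-r)$ quadratic, so $\chi\circ\theta_v\circ\Lambda$ is \emph{quadratic} along segments in $\mf U$, not affine. The nonlinear term therefore does \emph{not} decompose, and your $\pi^\epsilon$ need not lie in $\Pi_1$ at all, hence not in $\Pi_2$. The ``main obstacle'' you isolate (degeneracy near $t=0$) is thus not the real issue; with the paper's choice of $\tau$ that region contributes exactly zero.

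For the upper semicontinuity step the paper is also more explicit than your sketch: it introduces the ``currents'' $P_{i,v}(\pi)=\chi_v(\pi)(\tilde v\!\cdot\!\partial_{x_i}H-v_i)$ and their convex interpolation $P_{i,v}^\epsilon$, completes the square to write $\bb J_G(\pi^\epsilon)=A_\epsilon-B_\epsilon(G)^2$, and deduces $I_T(\pi^\epsilon|\gamma)\le A_\epsilon$. Lemma~\ref{quasiconcavidade} is then used, together with concavity of $\chi$, to get uniform integrability of the integrand of $A_\epsilon$ and hence $\limsup_\epsilon A_\epsilon\le A_0=I_T(\pi|\gamma)$. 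Your convergence-of-inner-products heuristic does not by itself give the required one-sided bound, since the optimizing field $H^\epsilon$ for $\pi^\epsilon$ is not a priori controlled in $H^1_0(\pi)$.
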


\begin{proof}
By Lemma \ref{itdensidade1}, it is enough to show that each path
$\pi(t,du)=(\rho,\bs p)(t,u) du$ in $\Pi_1$ can be approximated by paths in
$\Pi_2$.  Fix $\pi$ in $\Pi_1$ and let $\tau$ be as in the proof of
the previous lemma. For each $0<\varepsilon<1$, let
$(\rho^{\varepsilon},\bs p^{\varepsilon})=(1-\varepsilon)(\rho,\bs p)+\varepsilon\tau$,
$\pi^\varepsilon (t,du) = (\rho^{\varepsilon},\bs p^{\varepsilon})(t,u) du$.  Note that $\mc
Q(\pi^{\varepsilon})<\infty$ because $\mc Q$ is convex and both $\mc
Q(\pi)$ and $\mc Q(\tau)$ are finite.  Hence, $\pi^{\varepsilon}$
belongs to $D_{\gamma}$ since both $\rho$ and $\tau$ satisfy the
boundary conditions. Moreover, It is clear that $\pi^{\varepsilon}$
converges to $\pi$ as $\varepsilon\downarrow 0$.  By the lower
semicontinuity of $I_T(\cdot|\gamma)$, in order to conclude the proof,
it is enough to show that
\begin{eqnarray}\label{ls2}
\limsup_{N\to\infty}I_T(\pi^{\varepsilon}|\gamma)\leq I_T(\pi|\gamma)\, .
\end{eqnarray}

By Lemma \ref{lem05}, there exists $H\in H^1_0(\pi)$ such that
$(\rho, \bs p)$ solves the equation \eqref{f05}. Let us denote $\chi(\theta_v(\Lambda(\rho,\bs p)))$ simply by $\chi_v(\pi)$, and define 
$P_{i,v}(\pi) = \chi_v(\pi)\Big(\tilde{v}\cdot\partial_{x_i}H-v_i\Big)$, and note that $P_{i,v}(\tau) = -v_i\chi(\theta_v(\Lambda(\tau)))$. Let also

$$P_{i,v}^\epsilon = (1-\epsilon)P_{i,v}(\pi) + \epsilon P_{i,v}(\tau).$$

Observe that, by Lemma \ref{lem05},
$$I(\pi) = \frac{1}{4}\| H\|_\pi^2,$$
and that, using the definition of $\|\cdot\|_\pi$ in \eqref{prodintpi},
$$\frac{1}{4}\| H\|_\pi^2 = \frac{1}{4}\sum_{i,v} \int_0^Tdt\int_{D^d}du \chi_v(\pi)(\tilde{v}\cdot \partial_{x_i}H)^2= \frac{1}{4} \sum_{i,v}\int_0^Tdt\int_{D^d}du \frac{(P_{i,v}+v_i\chi_v(\pi))^2}{\chi_v(\pi)}.$$

A simple computation shows that
$$\bb J_G(\pi^\epsilon) = \sum_{i,v}\int_0^T\int_{D^d} [P_{i,v}^\epsilon + \chi_v(\pi^\epsilon)v_i](\tilde{v}\cdot \partial_{x_i}G)-\chi_v(\pi^\epsilon)(\tilde{v}\cdot \partial_{x_i}G)^2$$
$$=\frac{1}{4} \sum_{i,v} \int_0^Tdt\int_{D^d}du \frac{[P_{i,v}^\epsilon + \chi_v(\pi^\epsilon)v_i]^2}{\chi_v(\pi^\epsilon)} - \left(\frac{1}{2} \frac{P_{i,v}^\epsilon+\chi_v(\pi^\epsilon)}{\sqrt{\chi_v(\pi^\epsilon)}}-\sqrt{\chi_v(\pi)}(\tilde{v}\cdot \partial_{x_i}G)\right)^2.$$

Let
$$A_\epsilon = \frac{1}{4} \sum_{i,v} \int_0^Tdt\int_{D^d}du \frac{[P_{i,v}^\epsilon + \chi_v(\pi^\epsilon)v_i]^2}{\chi_v(\pi^\epsilon)},$$
and
$$B_\epsilon(G) = \int_0^Tdt\int_{D^d}du\left(\frac{1}{2} \frac{P_{i,v}^\epsilon+\chi_v(\pi^\epsilon)}{\sqrt{\chi_v(\pi^\epsilon)}}-\sqrt{\chi_v(\pi)}(\tilde{v}\cdot \partial_{x_i}G)\right).$$
This implies that 
$$I(\pi^\epsilon) = \sup_{G} \bb J_G(\pi^\epsilon) = \sup_{G} \left\{ A_\epsilon - B_\epsilon(G)^2\right\} = A_\epsilon - \inf_{G}  B_\epsilon(G)^2 \leq A_\epsilon,$$
where the supremum and infimum are taken over in $G$ in $C^{\infty}_c(\Omega_T)\times[\mc C^{\infty}_c(D^d)]^{d}$.

It remains to be shown that $A_\epsilon$ is uniformly integrable in $\epsilon$. However, this is a simple consequence of Lemma \ref{quasiconcavidade}.
\end{proof}

Let $\Pi$ be the subset of $\Pi_2$ consisting of all those paths $\pi$
which are solutions of the equation \eqref{f05} for some $H\in\mc
C^{1,2}_0(\overline{\Omega_T})\times [\mc C(\overline{D^d})]^d$.

\begin{theorem}
\label{th5}
The set $\Pi$ is $I_T(\cdot|\gamma)$-dense.
\end{theorem}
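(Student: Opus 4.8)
The plan is to show that $\Pi_2$ can be approximated by paths in $\Pi$, so that the $I_T(\cdot|\gamma)$-density of $\Pi$ follows from that of $\Pi_2$. Fix $\pi$ in $\Pi_2$. By Lemma \ref{lem05} there is a function $H\in H^1_0(\pi)$ with $(\rho,\bs p)$ a weak solution of \eqref{f05} and $I_T(\pi|\gamma) = \frac14\|H\|_\pi^2$. The issue is only that $H$ lies in the abstract Hilbert space $H^1_0(\pi)$, not necessarily in the smooth class $\mc C^{1,2}_0(\overline{\Omega_T})\times[\mc C(\overline{D^d})]^d$. Since $\pi$ belongs to $\Pi_2$, on each interval $[\delta,T]$ the density stays bounded away from $\partial\mf U$, so $\chi_v(\theta_v(\Lambda(\rho,\bs p)))$ is bounded below by a positive constant there; on $[0,\delta]$ the path solves the hydrodynamic equation and that portion contributes nothing to the rate function. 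Thus the weight in $\langle\cdot,\cdot\rangle_\pi$ is nondegenerate away from the initial layer, and $\mc C^{1,2}_0(\overline{\Omega_T})\times[\mc C(\overline{D^d})]^d$ is dense in $H^1_0(\pi)$ by the very definition of that space as a completion.

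First I would pick a sequence $H^n$ in $\mc C^{1,2}_0(\overline{\Omega_T})\times[\mc C(\overline{D^d})]^d$ with $\|H^n - H\|_\pi \to 0$; this is possible directly from the construction of $H^1_0(\pi)$ as the completion of the smooth class under $\langle\cdot,\cdot\rangle_\pi$. For each $n$, let $\lambda^n$ be the weak solution of \eqref{f05} with $H$ replaced by $H^n$ and the same initial profile $\gamma$; existence and uniqueness of such solutions is granted by the remark following \eqref{f05} (uniqueness as in \cite{s}), and existence follows from the same energy estimates. Set $\pi^n(t,du) = \lambda^n(t,u)\,du$. By construction $\pi^n\in\Pi$. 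I then need to check two things: that $\pi^n \to \pi$ in $D([0,T],\mc M_+\times\mc M^d)$, and that $I_T(\pi^n|\gamma)\to I_T(\pi|\gamma)$.

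For the convergence of the rate functions, I would use Lemma \ref{lem05} again: $I_T(\pi^n|\gamma) = \frac14\|H^n\|_{\pi^n}^2$, and I must compare $\|H^n\|_{\pi^n}$ with $\|H^n\|_\pi$, i.e. control the difference between integrating against $\chi_v(\theta_v(\Lambda(\lambda^n)))$ and against $\chi_v(\theta_v(\Lambda(\rho,\bs p)))$. This is where $\lambda^n \to (\rho,\bs p)$ is needed, and in a strong enough sense (say in $L^2(\Omega_T)$, or after passing to a subsequence a.e.) to pass the limit through the bounded continuous function $\chi\circ\theta_v\circ\Lambda$ and through the energy integrals; continuity of the solution map $H\mapsto\lambda$ in the appropriate topology, together with the uniform energy bounds from Lemma \ref{lem03} and the compactness Lemma \ref{lem02}, should give $\lambda^n\to(\rho,\bs p)$ strongly in $L^2$. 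Combining $\|H^n - H\|_\pi\to 0$ with this comparison yields $\|H^n\|_{\pi^n}\to\|H\|_\pi$, hence $I_T(\pi^n|\gamma)\to I_T(\pi|\gamma)$; the convergence $\pi^n\to\pi$ follows from the same $L^2$ convergence of the densities together with the fact that all paths lie in $C([0,T],\mc M^0)$ (as in Lemma \ref{lem01}, using \eqref{cont} for equicontinuity in time).

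The main obstacle I expect is the stability of the weak solutions of the nonlinear equation \eqref{f05} under perturbation of $H$: one must show that if $H^n\to H$ in $H^1_0(\pi)$ then the corresponding solutions $\lambda^n$ converge to $(\rho,\bs p)$ strongly enough that the quadratic energy functionals and the $\chi$-weights converge. This requires a priori energy estimates for $\lambda^n$ uniform in $n$ (so that Lemma \ref{lem02} applies), plus an argument — essentially a uniqueness/continuous-dependence argument of the type used in \cite{s} — identifying the limit of any convergent subsequence as $(\rho,\bs p)$. The degeneracy of $\chi_v$ near $\partial\mf U$ is precisely why one works inside $\Pi_2$ rather than $\Pi_1$: on $[\delta,T]$ the weight is bounded below, which both makes the norm comparison uniform and keeps the perturbed solutions $\lambda^n$ from leaving a neighborhood of the interior of $\mf U$, while the initial layer $[0,\delta]$ is harmless since it contributes zero to the rate function for all the approximants.
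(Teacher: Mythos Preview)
Your proposal is correct and follows essentially the same approach as the paper: reduce to approximating $\pi\in\Pi_2$, invoke Lemma~\ref{lem05} to obtain $H\in H^1_0(\pi)$, approximate $H$ by smooth $H^n$ (using that $\pi\in\Pi_2$ forces $\nabla H=0$ on an initial layer and makes the $\chi$-weight nondegenerate afterwards), solve \eqref{f05} with $H^n$ to produce $\pi^n\in\Pi$, and then use uniform bounds on $I_T(\pi^n|\gamma)$ together with compactness (Theorem~\ref{th4}, or equivalently Corollary~\ref{corls} plus Lemma~\ref{lem02}) and uniqueness for \eqref{f05} to identify any subsequential limit as $\pi$. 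The paper carries out exactly this argument, with the only cosmetic difference that it cites Theorem~\ref{th4} directly for relative compactness rather than unpacking it into Lemma~\ref{lem03} and Lemma~\ref{lem02} as you do; your discussion of the rate-function convergence via comparing $\|H^n\|_{\pi^n}$ with $\|H^n\|_\pi$ makes explicit a step the paper leaves implicit.
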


\begin{proof}
By the previous lemma, it is enough to show that each path $\pi$ in
$\Pi_2$ can be approximated by paths in $\Pi$. Fix $\pi(t,du)
=(\rho,\bs p)(t,u) du$ in $\Pi_2$.  By Lemma \ref{lem05}, there exists $H\in
H^1_{0}(\pi)$ such that $(\rho,\bs p)$ solves the equation
\eqref{f05}. Since $\pi$ belongs to $\Pi_2\subset \Pi_1$, $(\rho,\bs p)$ is the
weak solution of \eqref{problema2} in some time interval $[0,2\delta]$ for
some $\delta>0$. In particular, $\nabla H^k = 0$ a.e in
$[0,2\delta]\times{D^d}$. On the other hand, since $\pi$ belongs to
$\Pi_1$, there exists $\epsilon>0$ such that, for $k=0,\ldots,d$, $d(\pi_t^k (\cdot),\partial\mf U)\ge\epsilon$ for $\delta \le t\le T$.  Therefore,
\begin{equation}
\label{fin}
\int_0^T dt\int_{{D^d}} \Vert\nabla H_t(u)\Vert^2 \, du
\;<\; \infty\, .
\end{equation}

Since $H$ belongs to $H^1_{0}(\pi)$, there exists a sequence
of functions $\{H^n:\, n\geq 1\}$ in $\mc
C^{1,2}_{0}(\overline{\Omega_T})$ converging to $H$ in
$H^1_0(\pi)$. We may assume of course that $\nabla H^n_t
\equiv 0$ in the time interval $[0,\delta]$. In particular,
\begin{eqnarray}
\label{lim3}
\lim_{n\to\infty}\int_0^T dt\int_{{D^d}} du\; 
\Vert \nabla H^n_t(u)-\nabla H_t(u)\Vert^2 = 0\, .
\end{eqnarray}

For each integer $n>0$, let $(\rho^n,\bs p^n)$ be the weak solution of
\eqref{f05} with $H^n$ in place of $H$ and set $\pi^n (t,du) =
(\rho^n,\bs p^n)(t,u)du$. By \eqref{f06} and since $\chi$ is bounded above in
$[0,1]$ by $1/4$, we have that
\begin{equation*}
I_T(\pi^n|\gamma) = \frac{1}{2}\sum_{v\in\mc V}\int_0^T\ dt\;
\langle\chi(\theta_v(\Lambda(\rho^n_t,\bs p_t^n))),\Vert\nabla H^n_t\Vert^2\rangle
\leq C_0 \int_0^T dt\int_{{D^d}} du\;
\Vert\nabla H^n_t(u)\Vert^2\, .
\end{equation*}
In particular, by \eqref{fin} and \eqref{lim3}, $I_T(\pi^n|\gamma)$ is
uniformly bounded on $n$. Thus, by Theorem \ref{th4}, the sequence
$\pi^n$ is relatively compact in $D([0,T],\mc M_{+}\times\mc M^d)$.

Let $\{\pi^{n_k}:\, k\geq 1\}$ be a subsequence of $\pi^n$ converging
to some $\pi^0$ in $D([0,T],\mc M^0)$. For every $G$ in $\mc
C^{1,2}_0(\overline{\Omega_T})$,
$$
\int_{{D^d}} G(T,u)\cdot (\rho^{n_k}_t,\bs p_t^{n_k})(T,u)du - \int_{{D^d}} G(0,u)\cdot\gamma (u) du
$$
$$
=\int_0^T dt \int_{{D^d}} du \left\{ (\rho^{n_k}_t,\bs p_t^{n_k})(t,u)\cdot\partial_t G(t,u) + \frac{1}{2} (\rho^{n_k}_t,\bs p_t^{n_k})(t,u) \cdot \sum_{1\leq i\leq d} \partial_{u_i}^2 G(t,u)\right\}
$$
$$
-\int_0^T dt \int_{\{1\}\times\mathbb{ T}^{d-1}} dS\,\, b(\tilde{u})\cdot \partial_{u_1} G(t,u) + \int_0^T dt \int_{\{0\}\times\mathbb{ T}^{d-1}} dS\,\, a(\tilde{u})\cdot \partial_{u_1} G(t,u)
$$
$$
-\int_0^T dt \int_{{D^d}} du\,\,  \sum_{v\in\mathcal{ V}} \tilde{v}\cdot \chi(\theta_v(\Lambda(\rho^{n_k}_t,\bs p_t^{n_k}))) \sum_{1\leq i \leq d} v_i \partial_{u_i} G(t,u),
$$
$$+\sum_{v\in\mc V}\int_0^Tdt\;\int_{{D^d}}du \chi(\theta_v(\Lambda(\rho^{n_k}_t,\bs p_t^{n_k})))[\tilde{v}\cdot\nabla H^{n_k}][\tilde{v}\cdot\nabla G].$$

Letting $k\to\infty$ in this equation, we obtain the same equation
with $\pi^0$ and $H$ in place of $\pi^{n_k}$ and $H^{n_k}$,
respectively, if
\begin{equation}
\label{lim2}
\begin{split}
& \lim_{k\to\infty} \int_0^T dt \int_{{D^d}} du\,\,  \sum_{v\in\mathcal{ V}} \tilde{v}\cdot \chi(\theta_v(\Lambda(\rho^{n_k}_t,\bs p_t^{n_k}))) \sum_{1\leq i \leq d} v_i \partial_{u_i} G(t,u)\\
& \;=\; \int_0^T dt \int_{{D^d}} du\,\,  \sum_{v\in\mathcal{ V}} \tilde{v}\cdot \chi(\theta_v(\Lambda(\rho_t^0,\bs p_t^0))) \sum_{1\leq i \leq d} v_i \partial_{u_i} G(t,u), \\
&\lim_{k\to\infty} \sum_{v\in\mc V}\int_0^Tdt\;\int_{{D^d}}du \chi(\theta_v(\Lambda(\rho^{n_k}_t,\bs p_t^{n_k})))[\tilde{v}\cdot\nabla H^{n_k}][\tilde{v}\cdot\nabla G]\\
 &= \sum_{v\in\mc V}\int_0^Tdt\;\int_{{D^d}}du \chi(\theta_v(\Lambda(\rho^0_t,\bs p_t^0)))[\tilde{v}\cdot\nabla H][\tilde{v}\cdot\nabla G].\,
\end{split}
\end{equation}

We prove the second claim, the first one being simpler.  Note first
that we can replace $H^{n_k}$ by $H$ in the previous limit, because
$\chi$ is bounded in $[0,1]$ by $1/4$, and
\eqref{lim3} holds. Now, $(\rho^{n_k},\bs p^{n_k})$ converges to $(\rho^0,\bs p^0)$ weakly in
$L^2(\Omega_T)$ because $\pi^{n_k}$ converges to $\pi^0$ in
$D([0,T],\mc M^0)$. Since $I_T(\pi^n|\gamma)$ is uniformly bounded, by
Corollary \ref{corls} and Lemma \ref{lem02}, $(\rho^{n_k},\bs p^{n_k})$ converges to
$(\rho^0,\bs p^0)$ strongly in $L^2(\Omega_T)$ which implies \eqref{lim2}. In
particular, since \eqref{fin} holds, by uniqueness of weak solutions
of equation \eqref{f05}, $\pi^0 = \pi$ and we are done.
\end{proof}

\section{Large deviations}
\label{sec6}
We prove in this section Theorem \ref{mt}, which is the dynamical large deviations principle for
the empirical measure of boundary driven stochastic lattice gas model with many conserved quantities.
The proof uses some of the ideas introduced in \cite{flm}.

\subsection{Superexponential estimates}

It is well known that one of the main steps in the derivation of the
upper bound is a super-exponential estimate which allows the
replacement of local functions by functionals of the empirical density
in the large deviations regime. 

Let $\kappa$ be as in the beginning of Section \ref{sec4}. Note that since $\nu^N_{\kappa}$ is not the invariant state, there are no
reasons for $\langle-N^2\mc L_N f,f\rangle_{\nu^N_{\kappa}}$ to
be positive. The next statement shows that this expression is almost
positive.

For each function $f:X_N\to\bb R$, let $D_{\nu_{\kappa}^N}(f)$ be
$$D_{\nu_{\kappa}^N}(f) = D_{\nu_{\kappa}^N}^{ex}(f) + D_{\nu_{\kappa}^N}^c(f) + D_{\nu_{\kappa}^N}^b(f),$$
where
$$D_{\nu_{\kappa}^N}^{ex}(f) = \sum_{v\in\mathcal{V}}\sum_{x\in D_N^d} \sum_{x+z\in D_N^d}P_N(z-x,v)\int \left[\sqrt{f(\eta^{x,z,v})} - \sqrt{f(\eta)}\right]^2 \nu_\kappa^n(d\eta),$$
$$D_{\nu_{\kappa}^N}^{c}(f) = \sum_{q\in\mathcal{Q}}\sum_{x\in D_N^d} \int p(x,q,\eta)\left[\sqrt{f(\eta^{x,q})} - \sqrt{f(\eta)} \right]^2\nu_\kappa^N(d\eta),$$
and
\begin{align*}
D_{\nu_{\kappa}^N}^{b}(f) = \sum_{v\in\mathcal{V}}\sum_{x\in\{1\}\times\mathbb{T}_N^{d-1}} \int &[\alpha_v(\tilde{x}/N)(1-\eta(x,v)) + (1-\alpha_v(\tilde{x}/N))\eta(x,v)]\times\\
&\times\left[\sqrt{f(\sigma^{x,v}\eta)}-\sqrt{f(\eta)}\right]^2\nu_\kappa^N(d\eta) \; +\\
+\;\sum_{v\in\mathcal{V}}\sum_{x\in\{N-1\}\times\mathbb{T}_N^{d-1}}\int &[\beta_v(\tilde{x}/N)(1-\eta(x,v)) + (1-\beta_v(\tilde{x}/N))\eta(x,v)]\times\\
&\times\left[\sqrt{f(\sigma^{x,v}\eta)}-\sqrt{f(\eta)}\right]^2\nu_\kappa^N(d\eta).
\end{align*}

\begin{proposition}\label{dirichlet}
There exist constants $C_1>0$ and $C_2 = C_2(\alpha,\beta)>0$ such that for every density $f$ with respect to $\nu_\kappa^N$, then
$$<{\mathcal{ L}}_N\sqrt{f},\sqrt{f}>_{\nu_\kappa^N} \leq -C_1 D_{\nu_\kappa^N}(f) + C_2 N^{d-2}.$$
\end{proposition}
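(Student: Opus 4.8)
The plan is to estimate separately the three pieces of the Dirichlet form and then combine them, following the standard strategy for boundary-driven systems (as in \cite{KL}, Chapter 10, and \cite{flm}). Write $\mc L_N = N^2\{\mc L_N^b + \mc L_N^c + \mc L_N^{ex}\}$ and, using the standard identity for Markov generators, decompose
$$\langle \mc L_N\sqrt f,\sqrt f\rangle_{\nu_\kappa^N} = N^2\big(\langle\mc L_N^{ex}\sqrt f,\sqrt f\rangle_{\nu_\kappa^N}+\langle\mc L_N^c\sqrt f,\sqrt f\rangle_{\nu_\kappa^N}+\langle\mc L_N^b\sqrt f,\sqrt f\rangle_{\nu_\kappa^N}\big).$$

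\textbf{The collision part.} Since $\nu_\kappa^N$ is a product measure whose one-site marginals are of the form $m_{\boldsymbol\lambda}$ with $\boldsymbol\lambda = \Lambda(\kappa(x/N))$, and since collisions conserve momentum, the collision jump rates satisfy a detailed-balance-type relation with respect to $\nu_\kappa^N$ at each fixed site (the Gibbs weight $\exp\{\boldsymbol\lambda\cdot\boldsymbol I(\xi)\}$ is invariant under $\xi\mapsto$ collision image because $\boldsymbol I$ is preserved). Hence $\mc L_N^c$ is self-adjoint in $L^2(\nu_\kappa^N)$ and a routine computation gives exactly
$$\langle \mc L_N^c\sqrt f,\sqrt f\rangle_{\nu_\kappa^N} = -\tfrac12 D_{\nu_\kappa^N}^c(f)\le -\tfrac12 D_{\nu_\kappa^N}^c(f).$$

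\textbf{The exclusion part.} Here $\nu_\kappa^N$ is not invariant: the jump rate $P_N(z-x,v)$ is not reversible with respect to $\nu_\kappa^N$ both because of the weak asymmetry $\tfrac1N p(y,v)$ and because $\kappa$ is not constant. I would write, for each bond $(x,z)$ and each velocity $v$,
$$\int \eta(x,v)[1-\eta(z,v)]P_N(z-x,v)\big(f(\eta^{x,z,v})-f(\eta)\big)\,d\nu_\kappa^N$$
and symmetrize the bond $(x,z)\leftrightarrow(z,x)$, producing the term $-\tfrac12 c_{x,z,v}\big(\sqrt{f(\eta^{x,z,v})}-\sqrt{f(\eta)}\big)^2$ plus a cross term. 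The cross term is controlled by the elementary inequality $(\sqrt a-\sqrt b)^2 + (\sqrt a+\sqrt b)\cdot(\text{small}) $: more precisely one uses $|\sqrt{f(\eta^{x,z,v})}-\sqrt{f(\eta)}|\cdot|\sqrt{f(\eta^{x,z,v})}+\sqrt{f(\eta)}|$ and Young's inequality $2ab\le \gamma a^2 + \gamma^{-1}b^2$ with $a$ the gradient term and $b$ the average term, absorbing $\gamma(\sqrt{f(\eta^{x,z,v})}-\sqrt f)^2$ into the negative Dirichlet term and leaving $\gamma^{-1}(\sqrt{f(\eta^{x,z,v})}+\sqrt f)^2$, whose $\nu_\kappa^N$-integral is bounded by $C\int f\,d\nu_\kappa^N = C$ times the size of the discrepancy in the rates. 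The weak asymmetry contributes a discrepancy of order $1/N$ per bond and the smoothness of $\kappa$ (it is $C^2$, bounded away from $0$ and $1$) gives $\log(\nu_\kappa^N\text{-density ratio across a bond})=O(1/N)$; since there are $O(N^d)$ bonds, after multiplying by the global factor $N^2$ one gets a negative multiple of $D_{\nu_\kappa^N}^{ex}(f)$ plus an error $O(N^2\cdot N^d\cdot N^{-2}) = O(N^d)$. Wait — one must be careful: the error is $N^2\sum_{\text{bonds}}O(N^{-2})\cdot C = O(N^d)$, not $O(N^{d-2})$; the $N^{d-2}$ in the statement comes from the boundary part, see below, while the exclusion error is in fact lower order after a more careful expansion of the ratio to second order (the first-order term integrates to something that telescopes/cancels against $\partial_{u_i}^2\kappa$-type contributions up to $O(N^{-2})$ per bond). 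I would carry out this expansion: writing the ratio as $1 + N^{-1}a_{x,z,v} + O(N^{-2})$, the linear piece, after the bond symmetrization and integration against the symmetric measure, produces a total contribution bounded by $C N^{d}$ which must be refined — this is handled exactly as in \cite{KL}, Lemma 10.5.2 / \cite{flm}, yielding the stated $N^{d-2}$ after the overall $N^2$ factor once one notes the relevant sum is actually $O(N^{d-2})$ by the smoothness of $\kappa$.

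\textbf{The boundary part.} At each boundary site $x\in\{1,N-1\}\times\mathbb T_N^{d-1}$ and each $v$, the rate for the flip $\eta\mapsto\sigma^{x,v}\eta$ is $r_v(x)=\alpha_v(\tilde x/N)$ or $\beta_v(\tilde x/N)$ times the appropriate factor. The one-site marginal of $\nu_\kappa^N$ at $x$ is, by the defining property of $\kappa$, exactly the reservoir measure $m_{\Lambda(d(x/N))}$, so the birth/death rates ARE reversible with respect to $\nu_\kappa^N$ at these sites — this is precisely why $\kappa$ was chosen with $\kappa|_{\{0,1\}\times\mathbb T^{d-1}}=d(\cdot)$. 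Hence $\langle\mc L_N^b\sqrt f,\sqrt f\rangle_{\nu_\kappa^N} = -\tfrac12 D_{\nu_\kappa^N}^b(f)$ up to a discrepancy coming only from the mismatch between $\alpha_v(\tilde x/N)$ (the actual rate) and the value of $\kappa$ at the neighboring interior sites entering $\nu_\kappa^N$; this mismatch is $O(1/N)$ by the $C^2$ regularity, there are $O(N^{d-1})$ boundary sites, and after the global $N^2$ factor this gives the error term $C_2 N^{2}\cdot N^{d-1}\cdot N^{-1} = C_2 N^{d}$ — again refined to $N^{d-2}$ via a second-order Taylor expansion as in the references, with $C_2$ depending on $\alpha,\beta$ through their $C^2$ norms.

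Combining the three bounds, with $C_1$ a small positive constant extracted from the Young-inequality step (one takes $\gamma$ small enough that a definite fraction of each negative Dirichlet piece survives) and $C_2$ collecting the boundary discrepancy, gives
$$\langle\mc L_N\sqrt f,\sqrt f\rangle_{\nu_\kappa^N}\le -C_1 D_{\nu_\kappa^N}(f) + C_2 N^{d-2}.$$
The main obstacle is the exclusion estimate: unlike in the reversible setting, both the weak asymmetry and the non-constant profile $\kappa$ spoil reversibility, so one must carry out the bond-by-bond symmetrization and the second-order Taylor expansion of the density ratio carefully to verify that the accumulated error is genuinely $O(N^{d-2})$ rather than merely $O(N^d)$; the collision and boundary parts are comparatively easy because $\kappa$ has been designed to make $\nu_\kappa^N$ reversible for those dynamics exactly at the relevant sites, up to the same harmless $O(1/N)$ profile-smoothness corrections.
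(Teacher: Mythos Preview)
The paper omits the proof entirely, calling it ``elementary,'' so there is no argument to compare against. Your overall strategy---splitting into exclusion, collision, and boundary parts, using exact reversibility where available, and controlling the non-reversible remainder by a change of variables plus Young's inequality---is the standard one and is correct in outline.

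Your write-up, however, shows real confusion about the error orders in the exclusion part, and the resolution you propose (a ``second-order Taylor expansion'' making first-order terms ``telescope'') is not how the $N^{d-2}$ arises. The computation is simpler than you fear. Read the proposition for the un-speeded-up generator $\mc L_N^{ex}+\mc L_N^c+\mc L_N^b$ (which is how it is in fact used in the proof of Lemma~\ref{energiachi}, where a separate factor $N^2$ is applied externally). For a fixed bond $(x,z)$ and velocity $v$, after the change of variables $\eta\mapsto\eta^{x,z,v}$ and symmetrization, the cross term carries a coefficient of size $O(1/N)$, coming from the weak asymmetry $N^{-1}p(\cdot,v)$ in $P_N$ and from the Radon--Nikodym derivative $d(\nu_\kappa^N\circ\sigma^{x,z,v})/d\nu_\kappa^N=1+O(1/N)$ (this uses that $\kappa\in C^2$ with image in a compact of $(0,1)$). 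Young's inequality $ab\le\gamma a^2+(4\gamma)^{-1}b^2$ with $a=\sqrt{f(\eta^{x,z,v})}-\sqrt f$ and $b=O(N^{-1})\big(\sqrt f+\sqrt{f(\eta^{x,z,v})}\big)$ gives, per bond, an error of order $N^{-2}\int f\,d\nu_\kappa^N=O(N^{-2})$. Summing over $O(N^d)$ bonds yields $O(N^{d-2})$ directly---no telescoping, no second-order expansion. The boundary piece is treated the same way and contributes at most $O(N^{d-3})$ because there are only $O(N^{d-1})$ boundary sites; with the specific $\kappa$ introduced immediately after the proposition (constant in a strip near each face), $\nu_\kappa^N$ is exactly reversible for $\mc L_N^b$ for $N$ large and this piece contributes no error at all.
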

The proof of this proposition is elementary and is thus omitted. 

Further, we
may choose $\kappa$ for which there exists a constant $\theta>0$ such
that:
\begin{eqnarray*}
\kappa(u_1,\tilde{u}) =  d(-1,\tilde{u}) & 
\qquad\hbox{ if } \; 0\leq u_1\leq \theta\, , \\ 
\kappa(u_1,\tilde{u}) =  d(1,\tilde{u})\;\;\, & 
\,\hbox{ if } \;1-\theta\leq u_1\leq 1\, ,
\end{eqnarray*}
for all $\tilde{u}\in\bb T^{d-1}$. In that case, for every $N$ large
enough, $\nu_{\kappa}^N$ is reversible for the process with
generator $\mc L_{N}^b$ and then $\langle -N^2\mc L_{N}^b f,
f\rangle_{\nu^N_{\kappa}}$ is positive.

Fix $L\geq 1$ and a configuration $\eta$, let ${\boldsymbol I}^L(x,\eta):={\boldsymbol I}^L(x) = (I_0^L(x),\ldots,I_d^L(x))$ be the average of the conserved quantities in a cube of the length $L$ centered at $x$:
$${\boldsymbol I}^L(x)= \frac{1}{|\Lambda_L|} \sum_{z\in x+\Lambda_L} {\boldsymbol I}(\eta_z),$$
where, $\Lambda_L = \{-L,\ldots,L\}^d$ and $|\Lambda_L| = (2L+1)^d$ is the discrete volume of box $\Lambda_L$.

For each $G\in\mc C(\overline{\Omega_T})\times C(\overline{D^d})^d$, and each $\varepsilon>0$, let
\begin{equation*}
V_{N\varepsilon}^{G,1}(s,\eta)=\frac{1}{N^d}\sum_{k=0}^d\sum_{i,j=1}^d\sum_{x\in D_N^d}
\partial_{u_i}G^k(s,x/N)\left[\tau_x \tilde{V}_{N\varepsilon}^{j,k}\right]\, ,
\end{equation*}
where
\begin{align*}
\tilde{V}_{N\varepsilon}^{j,k}(\eta) &= \frac{1}{(2\ell +1)^d}\sum_{y\in{\Lambda_{N\varepsilon}}} \sum_{v\in\mathcal{ V}} v_k \sum_{z\in\mathbb{ Z}^d} p(z,v)z_j\tau_y( \eta(0,v)[1-\eta(z,v)] ) \\
&- \sum_{v\in\mathcal{ V}} v_j v_k \chi(\theta_v(\Lambda({\boldsymbol I}^\ell(0)))),
\end{align*}

and let

\begin{eqnarray*}
V_{N\varepsilon}^{G,2}(s,\eta)=\frac 1{2N^d}\sum_{v\in \mc V}\sum_{x\in D_N^d}\sum_{i=1}^d\sum_{j,k=0}^d v_k v_j\partial^N_{u_i}G^j_t(x/N) \partial^N_{u_i}G^k_t(x/N )\times\\
\times \left\{\eta(x,v)[1-\eta(x+e_i,v)] + \eta(x,v)[1- \eta(x-e_i,v)] - 2\chi(\theta_v(\Lambda({\boldsymbol I}^\ell(0))))\right\}
\end{eqnarray*}
Let, again, $G:[0,T]\times\mathbb{T}^{d-1}\to\mathbb{R}^{d+1}$ be a continuous function, and consider the quantities
\begin{equation*}
V_N^{-}(s,\eta,G) = \frac{1}{N^{d-1}}\sum_{k=0}^d\sum_{\tilde{x}\in\mathbb{T}_N^{d-1}} G_k(s,\tilde{x}/N) \Big(I_k(\eta_{(1,\tilde{x})}(s))- \sum_{v\in\mathcal{V}}v_k\alpha_v(\tilde{x}/N)\Big),
\end{equation*}
\begin{equation*}
V_N^{+}(s,\eta,G) = \frac{1}{N^{d-1}}\sum_{k=0}^d\sum_{\tilde{x}\in\mathbb{T}_N^{d-1}} G_k(s,\tilde{x}/N) \Big(I_k(\eta_{(N-1,\tilde{x})}(s))- \sum_{v\in\mathcal{V}}v_k\beta_v(\tilde{x}/N)\Big),
\end{equation*}
\begin{proposition}
\label{see}
Fix $G\in\mc C(\overline{\Omega_T})\times[\mc C(\overline{D^d})]^d$, $H$ in $\mc
C([0,T]\times\Gamma)\times[\mc C(\Gamma)]^d$, a cylinder function $\Psi$ and a sequence
$\{\eta^N: N\geq 1\}$ of configurations with $\eta^N$ in $X_N$. For
every $\delta>0$,
\begin{eqnarray*}
&&\limsup_{\varepsilon\to 0}\limsup_{N\to\infty}
\frac{1}{N^d}\, \log \bb{P}_{\eta^N}
\Big[ \, \Big|\int_0^T V_{N\varepsilon}^{G,j}(s,\eta_s) \, ds \Big|
>\delta\Big] \;=\; -\infty\, , \\
&&\limsup_{N\to\infty}\frac{1}{N^d} \, \bb P_{\eta^N}
\Big[ \, \Big|\int_0^T V_{N}^{\pm}(s,\eta,G)\Big|>\delta \, \Big] \;=\; -\infty\, ,
\end{eqnarray*}
for $j=1,2$.
\end{proposition}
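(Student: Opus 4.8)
The plan is to prove Proposition \ref{see} by the standard Feynman--Kac / entropy route used for superexponential estimates in boundary driven lattice gases, following \cite{KL}, Chapter 10 and the adaptation in \cite{flm}. For a fixed cylinder function of the form above, the first reduction is to note that it suffices to prove the one-sided bounds: for each $\delta>0$,
\begin{equation*}
\limsup_{\varepsilon\to 0}\limsup_{N\to\infty}\frac 1{N^d}\log\bb P_{\eta^N}\Big[\int_0^T V_{N\varepsilon}^{G,j}(s,\eta_s)\,ds>\delta\Big]=-\infty,
\end{equation*}
and the corresponding bound with $-V_{N\varepsilon}^{G,j}$, since the bound on the absolute value follows by taking the maximum of the two. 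Applying the exponential Chebyshev inequality with parameter $N^d a$ for arbitrary $a>0$ and then the Feynman--Kac formula, the left-hand side is bounded above by
\begin{equation*}
-\delta a+\int_0^T\sup_f\Big\{\int V_{N\varepsilon}^{G,j}(s,\eta)\,f(\eta)\,\nu_\kappa^N(d\eta)+\frac 1{a N^{d}}\,\langle N^2\mc L_N\sqrt f,\sqrt f\rangle_{\nu_\kappa^N}\Big\}\,ds,
\end{equation*}
where the supremum is over densities $f$ with respect to $\nu_\kappa^N$. By Proposition \ref{dirichlet}, the Dirichlet form term $\langle N^2\mc L_N\sqrt f,\sqrt f\rangle_{\nu_\kappa^N}$ is bounded above by $-C_1 N^2 D_{\nu_\kappa^N}(f)+C_2 N^d$; the error $C_2 N^d/(a N^d)=C_2/a$ is harmless (it is killed by letting $a\to\infty$ at the end), and we are left with a variational problem involving the static average of $V$ against $f$ controlled by the Dirichlet form.

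For the bulk terms $j=1,2$, the key step is the \emph{two-blocks} and \emph{one-block} estimate. The quantity $\tilde V_{N\varepsilon}^{j,k}$ is precisely designed so that its average equals the microscopic current minus its expected value under the grand-canonical measure at the empirical density ${\boldsymbol I}^\ell(0)$; thus one first uses the Dirichlet form bound to replace $\tilde V_{N\varepsilon}^{j,k}$ by its conditional expectation given the empirical averages in a box of size $\ell$ (one-block estimate, letting $N\to\infty$ then $\ell\to\infty$), and then replaces the density in a box of size $\ell$ by the density in a box of macroscopic size $\varepsilon N$ (two-blocks estimate). The equivalence of ensembles then shows the resulting expression is $o(1)$ as $\ell\to\infty$, $\varepsilon\to 0$. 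The only novelty here relative to \cite{KL} is the presence of the collision generator $\mc L_N^c$ and the velocity labels, but since $D_{\nu_\kappa^N}^c(f)\ge 0$ and the collision dynamics is local and conserves ${\boldsymbol I}$, it only helps: the equivalence of ensembles for the product measures $\nu_{\rho,\bs p}^N$ with the $d+1$ conserved quantities (mass and momentum) is what makes $\chi(\theta_v(\Lambda(\cdot)))$ appear as the correct replacement, exactly as in the hydrodynamic limit \cite{s}. For $V_{N\varepsilon}^{G,2}$ the argument is identical, the function being already of ``one-block'' type (it compares $\eta(x,v)[1-\eta(x\pm e_i,v)]$ with $\chi(\theta_v(\Lambda({\boldsymbol I}^\ell(0))))$), so only a one-block estimate is needed.

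For the boundary terms $V_N^{\pm}$, which live on the hyperplanes $x_1=1$ and $x_1=N-1$ and carry a normalization $N^{-(d-1)}$, the scaling is different: these are surface terms of order $N^{d-1}$, so the Feynman--Kac bound with speed $N^d$ gives that the relevant integral against $f$ is $O(N^{d-1})$ and is dominated by the boundary Dirichlet form $D_{\nu_\kappa^N}^b(f)$, which by the choice of $\kappa$ (constant equal to $d(\pm 1,\tilde u)$ near the boundary, so that $\nu_\kappa^N$ is reversible for $\mc L_N^b$) is nonnegative and controls fluctuations of $\eta(x,v)$ at boundary sites around $\alpha_v$, $\beta_v$. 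A standard integration-by-parts / summation-by-parts argument, together with the elementary inequality $2ab\le A^{-1}a^2+Ab^2$ to absorb the linear term into the Dirichlet form, yields the bound; here one also uses that $\alpha_v,\beta_v\in C^2(\bb T^{d-1})$ are bounded away from $0$ and $1$. Letting $a\to\infty$ at the end sends $-\delta a+C_2/a\to-\infty$, giving the claim. I expect the main technical obstacle to be the two-blocks estimate in the presence of the collision dynamics and multiple conserved quantities: one must ensure that the equivalence of ensembles holds uniformly and that the collision term, while nonnegative, does not obstruct the spectral-gap-type argument used in the one-block estimate — but since the collisions are on-site and the exclusion part alone already provides the needed mixing in each box, this goes through as in \cite{emy,s}.
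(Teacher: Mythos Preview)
Your proposal is correct and follows essentially the same route as the paper: the paper's own proof is a one-line citation of Proposition~\ref{dirichlet} (the Dirichlet-form bound for the nonreversible generator, which is the ``computation in \cite{BKL}, p.~78'' referred to), the replacement lemmas of \cite{s} (your one-block/two-block step for the bulk terms and the boundary replacement for $V_N^\pm$), and the standard Feynman--Kac/exponential Chebyshev reduction you describe. The only point you glossed over is the passage from the deterministic initial condition $\eta^N$ to the reference measure $\nu_\kappa^N$ in the Feynman--Kac bound, which costs an extra additive $\log C$ (from $d\delta_{\eta^N}/d\nu_\kappa^N\le C^{N^d}$) and is harmless; also note that after factoring, your displayed variational bound should carry an overall factor $a$ in front of the time integral, but since the replacement lemma drives the supremum to $C_2/a$ this does not affect the conclusion.
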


The proof of the above proposition follows from Proposition \ref{dirichlet}, the replacement lemmas proved in \cite{s}, and the computation presented in \cite{BKL}, p.
78, for nonreversible processes. 

For each $\varepsilon> 0$ and $\pi$ in $\mc M_{+}\times\mc M^d$, for $k=0,\ldots,d$, denote by
$\Xi_\varepsilon (\pi_k) = \pi_k^{\varepsilon}$ the absolutely continuous
measure obtained by smoothing the measure $\pi_k$:
\begin{equation*}
\Xi_\varepsilon (\pi_k) (dx) \;=\; \pi_k^{\varepsilon} (dx) \;=\; 
\frac 1{U_\varepsilon} \frac {\pi_k(\bs \Lambda_\varepsilon(x))}
{|\bs \Lambda_\varepsilon(x)|} \,\, dx\;,
\end{equation*}
where $\bs \Lambda_\varepsilon(x) = \{y\in{D^d} : |y-x|\le
\varepsilon\}$, $|A|$ stands for the Lebesgue measure of the set $A$,
and $\{U_\varepsilon : \varepsilon >0\}$ is a strictly decreasing
sequence converging to $1$: $U_\varepsilon >1$, $U_\varepsilon >
U_{\varepsilon'}$ for $\varepsilon>\varepsilon'$, $\lim_{\varepsilon
  \downarrow 0} U_\varepsilon = 1$. Let 
\begin{equation*}
\pi^{N,\varepsilon} \;=\; \Big( \Xi_\varepsilon (\pi_0^N), \Xi_\varepsilon (\pi_1^N),\ldots,\Xi_\varepsilon (\pi_d^N)\Big).
\end{equation*}
A simple computation shows that $\pi^{N,\varepsilon}$ belongs to $\mc
M^0$ for $N$ sufficiently large because $U_\varepsilon >1$, and that
for each continuous function $H:{D^d} \to \bb R^{d+1}$,
\begin{equation*}
\<\pi^{N,\varepsilon}, H\> \;=\; \frac 1{N^d} \sum_{x\in
  {D_N^d}} H(x/N)\cdot {\bs I}^{\varepsilon N}(x) \; +\; O(N, \varepsilon)\;,
\end{equation*}
where $O(N, \varepsilon)$ is absolutely bounded by $C_0 \{ N^{-1} +
\varepsilon\}$ for some finite constant $C_0$ depending only on $H$.

For each $H$ in $\mc C_0^{1,2}(\overline{\Omega_T})\times \mc [C_0^{2}(\overline{D^d})]^d$ consider the
exponential martingale $M_t^H$ defined by
\begin{eqnarray*}
M_t^H &=& \exp\Big\{N^d \Big[\big\langle\pi_t^N,H_t\big\rangle
-\big\langle\pi_0^N,H_0\big\rangle \\ 
&& \qquad\qquad\; -\; \frac{1}{N^d} \int_0^t
e^{-N^d\langle\pi_s^N,H_s\rangle} \, \big(\partial_s + 
N^2\mc L_N\big) \, e^{N^d\langle\pi_s^N,H_s\rangle} \, ds \Big]\Big\}\, .
\end{eqnarray*}
Recall from subsection 2.2 the definition of the functional $\hat
J_H$. An elementary computation shows that
\begin{eqnarray}
\label{mart}
M_T^H = \exp\left\{N^d\left[\hat J_{H}(\pi^{N, \varepsilon})
+\bb V_{N,\varepsilon}^H + c^1_H(\varepsilon) 
+ c^2_H(N^{-1})\right]\right\}\, .
\end{eqnarray}
In this formula, 
\begin{equation*}
\begin{split}
\bb V_{N,\varepsilon}^H \; &=\; -\int_0^T 
V_{N\varepsilon}^{G,1}(s,\eta) \, ds
- \sum_{i=1}^d \int_0^T 
V_{N\varepsilon}^{G,2}(s,\eta) \, ds \\
\; & +\; \,V^+_N(s,\eta,\partial_{u_1}H)
\; -\; \, V^-_N(s,\eta,\partial_{u_1}H) 
\;+\; \langle\pi^N_0,H_0\rangle - \langle\gamma,H_0\rangle\, ;
\end{split}
\end{equation*}
and $c^j_H:\bb R_+ \to\bb R$, $j=1,2$, are functions depending only on
$H$ such that $c^j_H(\delta)$ converges to $0$ as $\delta\downarrow
0$.  In particular, the martingale $M_T^H$ is bounded by
$\exp\big\{C(H,T)N^d\big\}$ for some finite constant $C(H,T)$
depending only on $H$ and $T$. Therefore, Proposition \ref{see} holds
for $\bb P_{\eta^N}^H = \bb P_{\eta^N}M_T^H$ in place of $\bb
P_{\eta^N}$.

\subsection{Energy estimates}

To exclude paths with infinite energy in the large deviations regime,
we need an energy estimate. We state first the following technical
result.
\begin{lemma}
\label{est6}
There exists a finite constant $C_0$, depending on $T$, such that for
every $G$ in $C^{\infty}_c(\Omega_T)$, every integer $1\leq i\leq d$, $0\leq k\leq d$,
and every sequence $\{\eta^N: N\geq 1\}$ of configurations with
$\eta^N$ in $X_N$,
\begin{eqnarray*}
\limsup_{N\to\infty}\frac{1}{N^d}\log\bb E_{\eta^N}
\Big[\exp\Big\{N^d\int_0^Tdt\; \langle\pi^{N,k}_t,
\partial_{u_i}G\rangle\Big\}\Big] \; \leq \; 
C_0\Big\{1+\int_0^T \Vert G_t\Vert^2_2 \, dt \Big\}\; .
\end{eqnarray*}
\end{lemma}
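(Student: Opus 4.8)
The plan is to exploit the exponential martingale already available from the dynamics and reduce the bound to a Feynman--Kac estimate together with a spectral-gap argument. First I would fix $G \in C^\infty_c(\Omega_T)$ and $1 \le i \le d$, $0 \le k \le d$, and write the exponential in the statement in terms of an additive functional. Recall that $\langle \pi^{N,k}_t, \partial_{u_i} G \rangle$ is, up to the error $O(N,\varepsilon)$ discussed before, of the form $N^{-d}\sum_{x} (\partial_{u_i}G)(x/N)\, \tau_x h$ for a local function $h$ built out of $I_k(\eta_x)$ and a discrete gradient; more directly, writing $\langle \pi^{N,k}_t, \partial_{u_i}G\rangle = -\langle \pi^{N,k}_t, \partial_{u_i}\text{(primitive)}\rangle$ and summing by parts, it equals $N^{-d}\sum_x \Phi(x/N) I_k(\eta_x(t))$ for a continuous $\Phi$ related to $G$, plus lower order. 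By the Feynman--Kac formula,
\begin{equation*}
\frac{1}{N^d}\log \bb E_{\eta^N}\Big[\exp\Big\{N^d\int_0^T dt\,\langle \pi^{N,k}_t,\partial_{u_i}G\rangle\Big\}\Big]
\;\le\; \int_0^T \lambda_{N,t}\, dt\, ,
\end{equation*}
where $\lambda_{N,t}$ is the largest eigenvalue of $N^2\mc L_N + V_t$ with $V_t(\eta) = \sum_x \Phi_t(x/N) I_k(\eta_x)$, and this eigenvalue admits the variational representation
\begin{equation*}
\lambda_{N,t} \;=\; \sup_{f}\Big\{ \int V_t\, f\, d\nu^N_\kappa \;+\; N^2\langle \mc L_N \sqrt f, \sqrt f\rangle_{\nu^N_\kappa}\Big\}\, ,
\end{equation*}
the supremum over densities $f$ with respect to $\nu^N_\kappa$.

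Next I would invoke Proposition \ref{dirichlet} to bound $N^2\langle \mc L_N\sqrt f,\sqrt f\rangle_{\nu^N_\kappa} \le -C_1 N^2 D_{\nu^N_\kappa}(f) + C_2 N^d$, so that
\begin{equation*}
\lambda_{N,t} \;\le\; \sup_f\Big\{ \int V_t\, f\, d\nu^N_\kappa \;-\; C_1 N^2 D_{\nu^N_\kappa}(f)\Big\} \;+\; C_2 N^d\, .
\end{equation*}
The term $\int V_t f\, d\nu^N_\kappa$ is linear in $f$, and the key point is to control it by a small multiple of the Dirichlet form plus a term of order $N^d$ involving $\|\Phi_t\|_2^2$. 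This is where the entropy/Dirichlet-form inequality enters: one writes $V_t$ as a sum over small mesoscopic blocks, replaces $I_k(\eta_x)$ by its block average using the exclusion-part Dirichlet form $D^{ex}_{\nu^N_\kappa}(f)$ (at a cost controlled by $C_1 N^2 D^{ex}$ and by a negligible term), and then estimates the resulting quadratic form by $\tfrac{A}{2}\|\Phi_t\|_2^2 N^d + \tfrac{1}{2A}(\text{variance term})$ for a parameter $A$, using the elementary inequality $2ab \le A a^2 + A^{-1}b^2$. Choosing $A$ comparable to $C_1$ absorbs the variance term into the Dirichlet form, leaving $\lambda_{N,t} \le C_0 N^d\{1 + \|\Phi_t\|_2^2\}$; integrating in $t$ and noting $\|\Phi_t\|_2 \le C\|G_t\|_2$ gives the claim. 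The additive constant $1$ accounts for the $C_2 N^d$ term and for the $O(N,\varepsilon)$-type corrections from replacing $\pi^{N,k}_t$ by its empirical-average form.

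The main obstacle I anticipate is the passage from the linear functional $\int V_t f\, d\nu^N_\kappa$ to a bound of the stated form: one must carry out the block replacement carefully near the boundary $\Gamma$, since $\nu^N_\kappa$ is not the invariant measure and the reference profile $\kappa$ is only known to be smooth with image in a compact subset of $(0,1)$. The cleanest route is to follow the standard argument (as in \cite{KL}, or the energy-estimate lemmas in \cite{flm}) of writing the discrete gradient appearing in $\langle \pi^{N,k}_t, \partial_{u_i}G\rangle$ as a telescoping sum and using that each term $\eta(x,v)-\eta(x+e_i,v)$ can be paired with $[\sqrt{f(\eta^{x,x+e_i,v})}-\sqrt{f(\eta)}]\,[\sqrt{f(\eta^{x,x+e_i,v})}+\sqrt{f(\eta)}]$, the first factor feeding the Dirichlet form and the second being bounded by $2$ in $L^2(\nu^N_\kappa)$ after an application of Cauchy--Schwarz; the factor $\|G_t\|_2^2$ then emerges from the $\ell^2$ norm of the coefficients $(\partial_{u_i}G)(x/N)$. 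The collision and boundary parts of $\mc L_N$ only help (their Dirichlet-form contributions are nonnegative up to the $C_2 N^{d}$ correction already accounted for in Proposition \ref{dirichlet}), so they require no separate treatment here.
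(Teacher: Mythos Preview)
Your overall strategy---Feynman--Kac, the variational formula for the principal eigenvalue, Proposition~\ref{dirichlet}, and then a Cauchy--Schwarz/discrete-gradient argument pairing $\eta(x,v)-\eta(x+e_i,v)$ against $[\sqrt{f(\eta^{x,x+e_i,v})}-\sqrt{f(\eta)}]$---is exactly the route the paper takes (it defers to Lemma~3.8 of \cite{s}, whose content is precisely what you outline and what is spelled out in the proof of Lemma~\ref{energiachi} here). The block-replacement language in your first pass is unnecessary; the second, cleaner route you describe is the correct one.

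There is one genuine gap. The Feynman--Kac bound in the form you wrote,
\[
\frac{1}{N^d}\log\bb E_{\eta^N}\Big[\exp\Big\{N^d\int_0^T V_t(\eta_s)\,ds\Big\}\Big]\;\le\;\int_0^T \lambda_{N,t}\,dt\,,
\]
with $\lambda_{N,t}$ given by the variational formula over $\nu_\kappa^N$-densities, is only valid when the initial law is $\nu_\kappa^N$, not the Dirac mass $\delta_{\eta^N}$. Since $\nu_\kappa^N$ is neither invariant nor the starting distribution, you must first pass from $\bb E_{\eta^N}$ to $\bb E_{\nu_\kappa^N}$. The paper does this via the elementary bound $d\delta_{\eta^N}/d\nu_\kappa^N \le C^{N^d}$ (which holds because the marginals of $\nu_\kappa^N$ are bounded away from $0$ and $1$), so that
\[
\bb E_{\eta^N}[\,\cdot\,]\;\le\; C^{N^d}\,\bb E_{\nu_\kappa^N}[\,\cdot\,]\,,
\]
contributing an additive $\log C$ after taking $N^{-d}\log$. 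This is the step your ``additive constant $1$'' accounting does not cover; once you insert it, your argument is complete and coincides with the paper's.
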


The proof of this proposition follows from Lemma 3.8 in \cite{s}, and the fact that $d\delta_{\eta^N}/d\nu_{\kappa}^N \leq C^{N^d}$, for some positive constant $C=C(\kappa)$.

For each $G$ in $\mc
C^{\infty}_c(\Omega_T)$ and each integer $1\leq i\leq d$, let
$\tilde{\mc Q}_{i,k}^G: D([0,T],\mc M_{+}\times \mc M^d)\to \bb R$ be the function given by
\begin{equation*}
\tilde{\mc Q}_{i,k}^G(\pi) =
\int_0^Tdt\;\langle\pi_t^k,\partial_{u_i}G_t\rangle
-C_0\int_0^Tdt\int_{{D^d}} du\;G(t,u)^2 \;.
\end{equation*}

Notice that
\begin{eqnarray}
\label{f07}
\sup_{G\in\mc C^{\infty}_c(\Omega_T)}\left\{\tilde{\mc
    Q}_{i,k}^G(\pi)\right\} = \frac{\mc Q_{i,k}(\pi)}{4C_0}\, . 
\end{eqnarray}

Fix a sequence $\{G_r: r\geq 1\}$ of smooth functions dense in
$L^2([0,T], H^1({D^d}))$. For any positive integers $m,l$, let
\begin{equation*}
B_{m,l}^k = \Big\{\pi\in D([0,T],\mc M_{+}\times \mc M^d): \,\max_{\substack{1\leq j\leq
      m\\ 1\leq i\leq d}} \tilde{\mc Q}^{G_j}_{i,k}(\pi) \leq l\Big\}\, . 
\end{equation*}
Since, for fixed $G$ in $\mc C^{\infty}_c(\Omega_T)$ and $1\leq i\leq
d$ integer, the function $\tilde{\mc Q}_{i,k}^G$ is continuous, $B_{m,l}$
is a closed subset of $D([0,T],\mc M)$.

\begin{lemma}
\label{lemmaenergiald}
There exists a finite constant $C_0$, depending on $T$,  such that
for any positive integers $r,l$ and any sequence $\{\eta^N: N\geq 1\}$
of configurations with $\eta^N$ in $X_N$,
\begin{equation*}
\limsup_{N\to\infty} \frac{1}{N^d} 
\log Q_{\eta^N}\left[(B_{m,l}^k)^c\right] \leq -l+C_0 ,
\end{equation*}
where $k=0,\ldots,d$.
\end{lemma}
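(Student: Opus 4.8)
The plan is to reduce the statement to a collection of one-dimensional exponential bounds (one for each test function $G_j$ and each direction $i$) and then apply the exponential martingale estimate of Lemma \ref{est6} together with a standard union bound. First I would recall that by definition of $B_{m,l}^k$,
$$
Q_{\eta^N}\left[(B_{m,l}^k)^c\right] \;=\; \bb P_{\eta^N}\Big[\max_{\substack{1\le j\le m\\ 1\le i\le d}} \tilde{\mc Q}^{G_j}_{i,k}(\pi^N) > l\Big]\;\le\; \sum_{j=1}^m\sum_{i=1}^d \bb P_{\eta^N}\Big[\tilde{\mc Q}^{G_j}_{i,k}(\pi^N) > l\Big],
$$
so that, since $\frac1{N^d}\log(\,\cdot\,)$ of a finite sum of terms is asymptotically the maximum of the $\frac1{N^d}\log$ of each term, it suffices to bound $\frac1{N^d}\log \bb P_{\eta^N}[\tilde{\mc Q}^{G}_{i,k}(\pi^N) > l]$ uniformly for a single $G\in\mc C^\infty_c(\Omega_T)$ and a single $i$.

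Next I would apply the exponential Chebyshev inequality: for any $G$,
$$
\bb P_{\eta^N}\Big[\tilde{\mc Q}^G_{i,k}(\pi^N) > l\Big] \;\le\; e^{-N^d l}\;\bb E_{\eta^N}\Big[\exp\Big\{N^d\,\tilde{\mc Q}^G_{i,k}(\pi^N)\Big\}\Big].
$$
Recalling the definition
$$
\tilde{\mc Q}^G_{i,k}(\pi) = \int_0^Tdt\,\langle\pi_t^k,\partial_{u_i}G_t\rangle - C_0\int_0^Tdt\int_{D^d}du\,G(t,u)^2,
$$
the deterministic second term simply pulls out of the expectation, and what remains is exactly the quantity controlled by Lemma \ref{est6}, namely
$$
\limsup_{N\to\infty}\frac1{N^d}\log\bb E_{\eta^N}\Big[\exp\Big\{N^d\int_0^Tdt\,\langle\pi^{N,k}_t,\partial_{u_i}G\rangle\Big\}\Big] \le C_0\Big\{1+\int_0^T\|G_t\|_2^2\,dt\Big\}.
$$
(Here one must also note that $\langle\pi^{N,k}_t,\partial_{u_i}G\rangle$ differs from $\langle\pi^N_t,\partial_{u_i}G\rangle$ by a term of order $O(N^{-1}+\varepsilon)$ coming from the smoothing, which is negligible in the limit; since $G$ has compact support in $\Omega_T$ one may in fact work directly with the empirical measure without smoothing.) Combining the three displays, the $-C_0\int_0^T\|G_t\|_2^2$ term and the $+C_0\int_0^T\|G_t\|_2^2$ term cancel, leaving
$$
\limsup_{N\to\infty}\frac1{N^d}\log \bb P_{\eta^N}\Big[\tilde{\mc Q}^G_{i,k}(\pi^N) > l\Big] \;\le\; -l + C_0,
$$
with $C_0$ the constant of Lemma \ref{est6}, which does not depend on $G$, $i$, $l$ or $m$. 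Taking the maximum over the finitely many pairs $(j,i)$ preserves this bound and yields the claim.

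The only genuinely delicate point is the bookkeeping around the exact form of the constant: one needs the $C_0$ appearing in the exponential estimate (Lemma \ref{est6}) to be \emph{the same} $C_0$ used in the definition of $\tilde{\mc Q}^G_{i,k}$, so that the two integrals of $G^2$ cancel precisely and no residual term depending on $\|G\|$ survives — this is why $\tilde{\mc Q}^G_{i,k}$ was defined with that particular constant in the first place. Everything else is routine: the union bound over a finite index set, the exponential Chebyshev inequality, and the reduction of $\limsup\frac1{N^d}\log$ of a finite sum to the maximum. I would therefore expect the write-up to be short, with the main care needed only in stating clearly that the smoothing error is asymptotically negligible and that the constant is inherited verbatim from Lemma \ref{est6}.
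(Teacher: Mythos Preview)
Your proposal is correct and follows essentially the same approach as the paper: the paper's proof consists of exactly the two ingredients you identify---the exponential Chebyshev inequality combined with Lemma~\ref{est6} to get $\limsup_{N\to\infty}\frac{1}{N^d}\log\bb P_{\eta^N}[\tilde{\mc Q}_{i,k}^{G_j}>l]\le -l+C_0$, followed by the elementary fact that $\limsup N^{-d}\log$ of a finite sum is the maximum of the individual $\limsup$'s. Your discussion of the constant bookkeeping (the cancellation of the $C_0\int\|G_t\|_2^2$ terms) is a useful clarification that the paper leaves implicit; the remark about smoothing is unnecessary, however, since $\pi^{N,k}$ in Lemma~\ref{est6} already denotes the $k$-th component of the unsmoothed empirical measure.
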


\begin{proof}
For integers $1\leq k\leq r$ and $1\leq i\leq d$, by Chebychev
inequality and by Lemma \ref{est6},
\begin{equation*}
\limsup_{N\to\infty} \frac{1}{N^d} 
\log \bb P_{\eta^N}\left[\tilde{\mc Q}_{i,k}^{G_m} > l\right] \leq -l +C_0\,.
\end{equation*}
Hence, from
\begin{eqnarray}
\label{ls}
\limsup_{N\to\infty} \frac{1}{N^d}\log(a_N+b_N) 
\leq \max\left\{\limsup_{N\to\infty} 
\frac{1}{N^d}\log a_N,\limsup_{N\to\infty} \frac{1}{N^d}\log b_N\right\}\, ,
\end{eqnarray}
we obtain the desired inequality.
\end{proof}

\begin{lemma}
\label{energiachi}
There exists a finite constant $C_0$, depending on $T$, such that for
every $G$ in $C^{\infty}_c(\Omega_T)\times [C^{\infty}_c(D^d)]^d$, and every sequence $\{\eta^N: N\geq 1\}$ of configurations with
$\eta^N$ in $X_N$,
\begin{eqnarray*}
\limsup_{N\to\infty}\frac{1}{N^d}\log\bb E_{\nu_{\kappa}^N}
\Big[\exp\Big\{N^d\int_0^T\sum_{i=1}^d\sum_{k=0}^d dt\; \langle\pi^{N}_t,
\partial_{u_i}G^k\rangle\Big\}\Big] \; \leq \; 
C_0\Big\{1+\int_0^T \Vert G_t\Vert^2_\pi \, dt \Big\}\; .
\end{eqnarray*}
In particular, we have that if $(\rho,\bs p)$ is the solution of \eqref{problema2}, then
\begin{eqnarray*}
\sup_{G\in \mc C^{1,2}_0(\overline{\Omega_T})}
\Big\{\sum_{i=1}^d\int_{0}^{T}ds\int_{{D^d}}du\;
\partial_{x_i}(\rho,\bs p)\cdot\partial_{x_i} G -
\sum_{v\in\mc V}\int_0^T dt\;\int_{{D^d}}du \chi(\theta_v(\Lambda(\rho,\bs p)))[\tilde{v}\cdot\nabla G]^2\Big\},
\end{eqnarray*}
is finite, and vanishes if $T\to 0$.
\end{lemma}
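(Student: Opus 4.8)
The plan is to prove the exponential estimate first, then deduce the two ``in particular'' claims as consequences. For the exponential bound, I would start from the exponential martingale machinery already available: for each vector valued $G = (G^0,\dots,G^d)$, consider a test function of the form $H_t = \sum_{k} \epsilon G^k_t e_k$ and look at $e^{-N^d\langle\pi^N_s,H_s\rangle}(\partial_s + N^2\mc L_N) e^{N^d\langle\pi^N_s,H_s\rangle}$. The key point, which is exactly the content of the computations behind \eqref{mart}, is that after applying the superexponential replacement lemma (Proposition \ref{see}) the bulk contribution of $N^2\mc L_N^{ex}$ produces, to leading order, the term $\sum_{v\in\mc V}\chi(\theta_v(\Lambda({\rho},\bs p)))\,\big(\tilde v\cdot\nabla(\text{linear combination of }G^k)\big)^2$, i.e.\ precisely $\|G_t\|_\pi^2$ in the notation of \eqref{prodintpi}, together with the linear term $\sum_i\langle\pi^{N}_t,\partial_{u_i}G^k\rangle$. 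Concretely, I would mimic the proof of Lemma \ref{est6}: write the left side as $\limsup_N \frac1{N^d}\log\bb E_{\nu_\kappa^N}[M_T^H \cdot (\text{stuff})]$ where $M_T^H$ is the martingale of mean one, absorb the $O(\varepsilon)+O(N^{-1})$ errors, use Proposition \ref{see} to replace local functions, and estimate the remaining Radon--Nikodym derivative $d\delta_{\eta^N}/d\nu^N_\kappa \le C^{N^d}$. Feynman--Kac and the fact that $\chi\le 1/4$ and $\mc V$ is finite then give the bound $C_0\{1 + \int_0^T\|G_t\|_\pi^2\,dt\}$, possibly after a rescaling $G\mapsto aG$ and optimizing over $a$ to convert a quadratic-in-$\|G\|_\pi$ bound into the stated linear one.

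For the ``in particular'' statement, observe that when $(\rho,\bs p)$ is the (bounded, finite-energy) weak solution of \eqref{problema2}, the functional
\begin{equation*}
\Phi(G) = \sum_{i=1}^d\int_0^T ds\int_{D^d} du\;\partial_{x_i}(\rho,\bs p)\cdot\partial_{x_i}G - \sum_{v\in\mc V}\int_0^T dt\int_{D^d}du\;\chi(\theta_v(\Lambda(\rho,\bs p)))[\tilde v\cdot\nabla G]^2
\end{equation*}
is, up to the sign of the quadratic term, the ``linear part minus Dirichlet form'' that appears in $\bb J_G$. I would show $\sup_G \Phi(G) < \infty$ by a direct duality/completion-of-squares argument: since $\chi(\theta_v(\Lambda(\rho,\bs p)))$ is bounded below by a positive constant only when $(\rho,\bs p)$ stays away from $\partial\mf U$, but here one only needs the upper bound $\chi\le 1/4$ together with the finite-energy bound on $(\rho,\bs p)$; apply $2ab\le \delta a^2 + \delta^{-1}b^2$ to the linear term to get $\Phi(G) \le \delta^{-1}\hat{\mc Q}(\pi) + (\delta\cdot d - c)\sum_{v}\int\!\int \chi_v|\tilde v\cdot\nabla G|^2$, and choose $\delta$ small. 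Actually the cleanest route is to recognize, via Lemma \ref{lem05} / Corollary \ref{rfhe}, that since $(\rho,\bs p)$ solves the hydrodynamic equation its rate function vanishes and the associated $H\equiv 0$; then $\sup_G\Phi(G)$ is bounded by $C_0\hat{\mc Q}(\pi)$ which is finite by Theorem \ref{limhyd2}. For the vanishing as $T\to 0$: both the energy $\hat{\mc Q}(\pi)$ restricted to $[0,T]$ and the quadratic term are integrals over $[0,T]$ of fixed $L^1$-in-time integrands (finite energy gives $\int_0^T\|\nabla(\rho,\bs p)_t\|_2^2\,dt \to 0$), so dominated convergence forces $\sup_G\Phi(G)\to 0$ — one has to be slightly careful that the supremum and the limit interchange, which follows because $\Phi(G)\le\delta^{-1}\int_0^T\|\nabla(\rho,\bs p)_t\|_2^2\,dt$ uniformly in $G$ once one uses the completion of squares bound with a $T$-independent $\delta$.

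The main obstacle I anticipate is the first part: making the passage from the discrete generator computation to the clean continuum bound $C_0\{1+\int_0^T\|G_t\|_\pi^2\,dt\}$ with the correct object $\|\cdot\|_\pi$ appearing. This requires (i) identifying that the leading bulk term from $N^2\mc L_N^{ex}$ applied to $e^{N^d\langle\pi^N,H\rangle}$ is the squared gradient weighted by $\chi(\theta_v(\cdot))$ evaluated at the empirical density — which is where the mean-drift $p(y,v)$ and the velocity structure enter — and (ii) controlling the collision and boundary generators, which contribute only lower-order or sign-favorable terms (the boundary part is handled by the reversibility of $\nu^N_\kappa$ for $\mc L^b_N$ after the special choice of $\kappa$ near $\{u_1 \in [0,\theta]\cup[1-\theta,1]\}$). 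Both of these are already essentially contained in \cite{s} (Lemma 3.8) and in the derivation of \eqref{mart}, so the honest work is bookkeeping rather than a new idea; accordingly I would state that ``the proof follows from Lemma 3.8 in \cite{s} and the computations leading to \eqref{mart}'' and only spell out the completion-of-squares step and the dominated-convergence step for the $T\to 0$ assertion.
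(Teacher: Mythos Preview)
Your proposal reaches the right destination but takes a detour the paper avoids. The paper does not go through the exponential martingale $M_T^H$ or the superexponential estimates of Proposition~\ref{see}; instead it applies Feynman--Kac directly to obtain the variational bound
\[
\frac{1}{N^d}\log \bb E_{\nu_\kappa^N}\Big[\exp\Big\{\cdots\Big\}\Big]\;\le\;\frac{1}{N^d}\int_0^T\lambda_s^N\,ds,
\]
with $\lambda_s^N=\sup_f\big\{\langle N\sum_{i,k,x}(I_k(\eta_x)-I_k(\eta_{x-e_i}))\partial_{u_i}G^k,f\rangle_{\nu_\kappa^N}+N^2\langle\mc L_N\sqrt f,\sqrt f\rangle_{\nu_\kappa^N}\big\}$, then uses Proposition~\ref{dirichlet} to replace the generator term by $-\tfrac{N^2}{2}D_{\nu_\kappa^N}(f)+CN^d$. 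The crucial step is to rewrite the linear part as $\sum_v\sum_{i,x}N(\tilde v\cdot\partial_{u_i}G)\int[\eta(x,v)-\eta(x-e_i,v)]f\,d\nu_\kappa^N$, split $\eta(x,v)-\eta(x-e_i,v)$ into current terms $\eta(x,v)[1-\eta(x-e_i,v)]-\eta(x-e_i,v)[1-\eta(x,v)]$, and play each against a piece of the exclusion Dirichlet form via the change of variables $\eta\mapsto\eta^{x-e_i,x,v}$ (Lemma~3.8 of \cite{s}). This produces exactly $(\tilde v\cdot\partial_{u_i}G)^2\int\eta(x,v)[1-\eta(x-e_i,v)]f\,d\nu_\kappa^N$, and the replacement lemma of \cite{s} converts the occupation variable product into $\chi(\theta_v(\Lambda(\cdot)))$, yielding $\|G\|_\pi^2$. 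Your martingale route would ultimately rely on the same microscopic identity, but wrapping it in $M_T^H$ and then invoking Proposition~\ref{see} is circuitous: those superexponential bounds are probability estimates, and turning them into exponential-moment bounds requires an extra H\"older/Laplace step you do not spell out. Note also that your line about bounding $d\delta_{\eta^N}/d\nu_\kappa^N$ is out of place here, since the expectation in the statement is already taken under $\nu_\kappa^N$.

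For the ``in particular'' clause your completion-of-squares argument and the $T\to 0$ dominated-convergence reasoning are correct and in fact more explicit than what the paper writes; the paper simply absorbs this into ``an application of the replacement lemma concludes the proof''.
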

\begin{proof}
Applying Feynman-Kac's formula and using the same arguments of Lemma 3.3 in \cite{s}, we have that 
$$\frac{1}{N^d} \log E_{\nu_\kappa^N} \left[ \exp\left\{N \int_0^T ds \sum_{i=1}^d\sum_{k=0}^d\sum_{x\in D_N^{d}} (I_k(\eta_x(s))-I_k(\eta_{x-e_i}(s)))\partial_{u_i}G^k(s,x/N) \right\} \right]$$
is bounded above by
$$\frac{1}{N^d} \int_0^T \lambda_s^N ds,$$
where $\lambda_s^N$ is equal to
$$\sup_f \Big\{\Big< N\!\!\sum_{i,k}\sum_{x\in D_N^d}\!\! (I_k(\eta(x))-I_k(\eta(x-e_i)))\partial_{u_i}G^k(s,x/N),f\Big>_{\!\!\!\nu_\kappa^N}\!\!\!\! + N^2<{\mathcal{L}}_N\sqrt{f},\sqrt{f}>_{\nu_\kappa^N}\Big\},$$
where the supremum is taken over all densities $f$ with respect to $\nu_\kappa^N$. By Proposition \ref{dirichlet}, the expression inside brackets is bounded above by
$$CN^d - \frac{N^2}{2} D_{\nu_\kappa^N}(f) + \sum_{i,k}\sum_{x\in D_N^d}\left\{ N\partial_{u_i}G^k(s,x/N)\int[I_k(\eta_x)-I_k(\eta_{x-e_i})]f(\eta)\nu_\kappa^N(d\eta) \right\}.$$
We now rewrite the term inside the brackets as
$$ \sum_{v\in\mathcal{V}} \sum_{i=1}^d\sum_{x\in D_N^d}\left\{\int N (\tilde{v}\cdot\partial_{u_i}G(s,x/N))  [\eta(x,v) - \eta(x-e_i,v)]f(\eta) \nu_\kappa^N(d\eta)\right\}.$$
Writing $\eta(x,v) - \eta(x-e_i,v) = \eta(x,v)[1-\eta(x-e_i,v)] - \eta(x-e_i,v)[1-\eta(x,v)]$, and applying the same arguments in Lemma 3.8 of \cite{s}, we obtain that
$$N(\tilde{v}\cdot \partial_{u_i}G(s,x/N)) \int [\eta(x,v) - \eta(x-e_i,v)] f(\eta) \nu_\kappa^N(d\eta)$$
\begin{eqnarray*}
&\leq&(\tilde{v}\cdot \partial_{u_i}G(s,x/N))^2 \int \eta(x,v)[1-\eta(x-e_i,v)] f(\eta^{x-e_i,x,v})d\nu_{\kappa}^N\\
&+& \frac{1}{4} \int f(\eta^{x-e_i,x,v})\left[N\left(1-\frac{\gamma_{x-e_i},v}{\gamma_{x,v}}\right)\right]^2 \nu_\kappa^N(d\eta)\\
&+&N^2 \int \frac{1}{2} [\sqrt{f(\eta^{x-e_i,x,v})}- \sqrt{f(\eta)}]^2\nu_\kappa^N(d\eta)\\
&+& 2(\tilde{v}\cdot \partial_{u_i}G(s,x/N))^2\int \eta(x,v)[1-\eta(x-e_i,v)] (\sqrt{f(\eta)}+\sqrt{f(\eta^{x-e_i,x,v})})^2\nu_\kappa^N(d\eta),
\end{eqnarray*}
we have that $(\sqrt{f(\eta)}+\sqrt{f(\eta^{x-e_i,x,v})})^2\leq 2 (f(\eta)+f(\eta^{x-e_i,x,v}))$. An application of the replacement lemma (Lemma 3.7 in \cite{s}) concludes the proof.
\end{proof}

\subsection{Upper Bound}

Fix a sequence $\{F_j: j\geq 1\}$ of smooth functions
dense in $\mc C(\overline{{D^d}})$ for the uniform topology, with positive coordinates.  For
$j\ge 1$ and $\delta>0$, let
\begin{equation*}
D_{j,\delta} = \Big\{\pi\in D([0,T],\mc M_{+}\times\mc M^d): \, |\<\pi_t^k , F_j\>|
\le \breve{v}^k|\mc V|\int_{{D^d}} F_j(x) \, dx  \,+\, C_j\delta\;,\,k=0,\ldots,d,\, 0\le t\le T \Big\} \, ,
\end{equation*}
where $\breve{v}^0 = 1$ and $\breve{v}^k=\breve{v}$, $C_j = \Vert \nabla F_j \Vert_\infty$ and $\nabla F$ is the
gradient of $F$. Clearly, the set $D_{j,\delta}$, $j\ge 1$, $\delta
>0$, is a closed subset of $D([0,T],\mc M_{+}\times\mc M^d)$. Moreover, if
\begin{equation*}
E_{m,\delta} \;=\; \bigcap_{j=1}^m D_{j,\delta}\;,
\end{equation*}
we have that $D([0,T],\mc M^0) = \cap_{n\ge 1} \cap_{m\ge 1}
E_{m,1/n}$. Note, finally, that for all $m\ge 1$, $\delta>0$,
\begin{equation}
\label{ff01}
\pi^{N,\varepsilon} \text{ belongs to $E_{m,\delta}$ for $N$ sufficiently
large.}
\end{equation}
\smallskip

Fix a sequence of configurations $\{\eta^N: N\geq 1\}$ with $\eta^N$
in $X_N$ and such that $\pi^N(\eta^N)$ converges to $\gamma(u)du$ in
$\mc M$. Let $A$ be a subset of $D([0,T],\mc M_{+}\times\mc M^d)$,
\begin{equation*}
\frac{1}{N^d}\log\bb P_{\eta^N}\left[\pi^N\in A\right] 
= \frac{1}{N^d}\log \bb E_{\eta^N}\left[M_T^H \, (M_T^H)^{-1}
\, {\bf 1} \{\pi^N\in A\}\right]\, .
\end{equation*}
Maximizing over $\pi^N$ in $A$, we get from \eqref{mart} that the last
term is bounded above by
\begin{equation*}
-\inf_{\pi\in A} \hat J_H(\pi^{\varepsilon})
+\frac{1}{N^d}\log\bb E_{\eta^N}
\Big[M_T^H \, e^{-N^d\bb V_{N,\varepsilon}^H}\Big] 
- c^1_H(\varepsilon)-c^2_H(N^{-1})\, .
\end{equation*}
Since $\pi^N(\eta^N)$ converges to $\gamma(u)du$ in $\mc M$ and since
Proposition \ref{see} holds for $\bb P_{\eta^N}^H = \bb
P_{\eta^N}M_T^H$ in place of $\bb P_{\eta^N}$, the second term of the
previous expression is bounded above by some $C_H(\varepsilon, N)$
such that
\begin{equation*}
\limsup_{\varepsilon\to 0}\limsup_{N\to \infty}
C_H(\varepsilon, N) = 0\, .
\end{equation*}
Hence, for every $\varepsilon>0$, and every $H$ in $\mc
C^{1,2}_0(\overline{\Omega_T})\times \mc [C_0^{2}(\overline{D^d})]^d$,
\begin{eqnarray}
\label{est7}
\limsup_{N\to\infty}\frac{1}{N^d}\log\bb P_{\eta^N}[A]
\leq -\inf_{\pi\in A} \hat J_H(\pi^{\varepsilon})+C'_H(\varepsilon)\, ,
\end{eqnarray}
where $\displaystyle \lim_{\varepsilon\to 0}C'_H(\varepsilon)=0$.
Let
$$B_{r,l} = \Big\{\pi\in D([0,T],\mc M_{+}\times \mc M^d): \,\max_{\substack{1\leq j\leq
      r\\ 1\leq i\leq d}} \sum_{k=0}^d\tilde{\mc Q}^{G_j}_{i,k}(\pi) \leq l\Big\}, $$
and, for each $H\in \mc C_0^{1,2}(\overline{\Omega_T})\times \mc [C_0^{2}(\overline{D^d})]^d$, each
$\varepsilon>0$ and any $r,l, m, n \in\bb Z_+$, let
$J_{H,\varepsilon}^{r,l,m,n}:D([0,T],\mc M_{+}\times\mc M^d)\to\bb R\cup\{\infty\}$ be
the functional given by
\begin{equation*}
J_{H,\varepsilon}^{r,l,m,n}(\pi) = 
\begin{cases}
\hat J_H(\pi^{\varepsilon}) & \hbox{ if } \pi\in B_{r,l} \cap
E_{m,1/n} \, ,\\
+\infty & \hbox{ otherwise } .
\end{cases}
\end{equation*}
This functional is lower semicontinuous because so is $\hat J_H \circ
\Xi_\varepsilon$ and because $B_{r,l}$, $E_{m,1/n}$ are closed subsets
of $D([0,T],\mc M_{+}\times\mc M^d)$.

Let $\mc O$ be an open subset of $D([0,T],\mc M_{+}\times\mc M^d)$. By Lemma
\ref{lemmaenergiald}, \eqref{ls}, \eqref{ff01} and \eqref{est7},
\begin{eqnarray*}
\limsup_{N\to\infty}\frac{1}{N^d}\log Q_{\eta^N}[\mc O] 
& \leq & \max\Big\{\limsup_{N\to\infty}\frac{1}{N^d}
\log Q_{\eta^N}[\mc O\cap B_{r,l} \cap E_{m,1/n}] \, , \\ 
&& \qquad\qquad\qquad\qquad
\;\limsup_{N\to\infty}\frac{1}{N^d}\log Q_{\eta^N}[(B_{r,l})^c]\Big\}
\\ & \leq & \max\Big\{-\inf_{\pi\in \mc O\cap B_{r,l} \cap E_{m,1/n}}
\hat J_H(\pi^{\varepsilon}) + C'_H(\varepsilon)\, , \,-l+C_0\Big\}
\\ & = & - \inf_{\pi\in\mc O} L_{H,\varepsilon}^{r,l,m,n}(\pi) \, ,
\end{eqnarray*}
where
\begin{equation*}
L_{H,\varepsilon}^{r,l,m,n}(\pi)=\min \left\{J_{H,\varepsilon}^{r,l,m,n}(\pi)
- C'_H(\varepsilon)\, , \,l- C_0\right\}\, .
\end{equation*}
In particular,
\begin{equation*}
\limsup_{N\to\infty}\frac{1}{N^d}\log Q_{\eta^N}[\mc O]
\leq - \sup_{H,\varepsilon,r,l,m,n}\;\inf_{\pi\in\mc O}
L_{H,\varepsilon}^{r,l,m,n}(\pi)\, .
\end{equation*}

Note that, for each $H\in \mc C_0^{1,2}(\overline{\Omega_T})\times \mc [C_0^{2}(\overline{D^d})]^d$, each
$\varepsilon>0$ and $r,l,m,n \in\bb Z_+$, the functional
$L_{H,\varepsilon}^{r,l,m,n}$ is lower semicontinuous. Then, by Lemma
A2.3.3 in \cite{KL}, for each compact subset $\mc K$ of $D([0,T],\mc
M)$,
\begin{eqnarray*}
\limsup_{N\to\infty}\frac{1}{N^d}\log Q_{\eta^N}[\mc K] 
\;\leq\; - \inf_{\pi\in\mc K}\;\sup_{H,\varepsilon,r,l,m,n}
L_{H,\varepsilon}^{r,l,m,n}(\pi) \, .
\end{eqnarray*}
By \eqref{f07} and since $D([0,T],\mc M^0) = \cap_{n\ge 1}
\cap_{m\ge 1} E_{m,1/n}$,
\begin{equation*}
\begin{split}
& \limsup_{\varepsilon\to 0}\limsup_{l\to\infty}
\limsup_{r\to\infty}\limsup_{m\to\infty}\limsup_{n\to\infty} 
L_{H,\varepsilon}^{r,l,m,n}(\pi) \; = \\
&\qquad\qquad\qquad\qquad\qquad 
\begin{cases}
\hat J_H(\pi) & \hbox{ if } \mc Q(\pi)<\infty \text{ and } \pi \in
D([0,T],\mc M^0)\, ,\\
+\infty & \hbox{ otherwise }.
\end{cases}
\end{split}
\end{equation*}
This result and the last inequality imply the upper bound for compact
sets because $\hat J_H$ and $J_H$ coincide on $D([0,T],\mc M^0)$.  To
pass from compact sets to closed sets, we have to obtain exponential
tightness for the sequence $\{Q_{\eta^N}\}$. This means that there
exists a sequence of compact sets $\{\mc K_n : \,n\geq 1\}$ in
$D([0,T],\cm)$ such that
\begin{equation*}
\limsup_{N\to\infty}\frac{1}{N^d}\log Q_{\eta^N}({\mc K_n}^c)\leq -n\, .
\end{equation*}
The proof presented in \cite{B} for the non interacting zero range
process is easily adapted to our context.

\subsection{Lower Bound}

The proof of the lower bound is similar to the one in the convex
periodic case. We just sketch it and refer to \cite{KL}, Section 10.5.
Fix a path $\pi$ in $\Pi$ and let $H\in\mc
C^{1,2}_0(\overline{\Omega_T})$ be such that $\pi$ is the weak
solution of equation \eqref{f05}. Recall from the previous section the
definition of the martingale $M_t^H$ and denote by $\bb P^H_{\eta^N}$
the probability measure on $D([0,T],X_N)$ given by $\bb
P_{\eta^N}^H[A] = \bb E_{\eta^N}[M_T^H \mb 1 \{A\}]$. Under $\bb
P^H_{\eta^N}$ and for each $0\leq t\leq T$, the empirical measure
$\pi^N_t$ converges in probability to $\pi_t$. Further,
\begin{equation*}
\lim_{N\to\infty}\frac{1}{N^d}H
\left(\bb P^H_{\eta^N}\big|\bb P_{\eta^N}\right) = I_T(\pi|\gamma)\, ,
\end{equation*}
where $H(\mu|\nu)$ stands for the relative entropy of $\mu$ with
respect to $\nu$. From these two results we can obtain that for every
open set $\mc O\subset D([0,T],\mc M_{+}\times\mc M^d)$ which contains $\pi$,
\begin{equation*}
\liminf_{N\to\infty}\frac{1}{N^d}\log
\bb P_{\eta^N}\big[\mc O\big]\geq -I_T(\pi|\gamma)\, .
\end{equation*}
The lower bound follows from this and the $I_T(\cdot|\gamma)$-density
of $\Pi$ established in Theorem \ref{th5}.
\section*{Acknowledgements} We would like to thank Claudio Landim for suggesting this problem. 

\end{document}